\documentclass[12pt]{amsart}
\usepackage{amsmath}
\usepackage{amsfonts}
\usepackage{amssymb}
\usepackage[all,cmtip]{xy}           
\usepackage{bm}
\usepackage{bbm}
\usepackage{bbding}
\usepackage{txfonts}
\usepackage{amscd}
\usepackage{xspace}
\usepackage[shortlabels]{enumitem}
\usepackage{ifpdf}
\usepackage{ulem}

\ifpdf
  \usepackage[colorlinks,final,backref=page,hyperindex]{hyperref}
\else
  \usepackage[colorlinks,final,backref=page,hyperindex,hypertex]{hyperref}
\fi
\usepackage{tikz}
\usepackage[active]{srcltx}

\usepackage{tikz-cd}
\usepackage{tikz}

\makeatletter

\newtheorem{thm}{Theorem}[section]
\newtheorem{lem}[thm]{Lemma}
\newtheorem{cor}[thm]{Corollary}
\newtheorem{pro}[thm]{Proposition}
\theoremstyle{definition}
\newtheorem{ex}[thm]{Example}
\newtheorem{rmk}[thm]{Remark}
\newtheorem{defi}[thm]{Definition}

\newcommand{\nc}{\newcommand}
\newcommand{\delete}[1]{}

\nc{\mlabel}[1]{\label{#1}}  
\nc{\mcite}[1]{\cite{#1}}  
\nc{\mref}[1]{\ref{#1}}  
\nc{\meqref}[1]{\eqref{#1}}  
\nc{\mbibitem}[1]{\bibitem{#1}} 

\delete{
\nc{\mlabel}[1]{\label{#1}{\hfill \hspace{1cm}{\bf{{\ }\hfill(#1)}}}}
\nc{\mcite}[1]{\cite{#1}{{\bf{{\ }(#1)}}}}  
\nc{\mref}[1]{\ref{#1}{{\bf{{\ }(#1)}}}}  
\nc{\meqref}[1]{\eqref{#1}{{\bf{{\ }(#1)}}}}  
\nc{\mbibitem}[1]{\bibitem[\bf #1]{#1}} 
}

\newcommand {\emptycomment}[1]{}

\nc{\oprn}{\theta}

\nc{\calo}{\mathcal{O}}
\nc{\oop}{$\mathcal{O}$-operator\xspace}
\nc{\oops}{$\mathcal{O}$-operators\xspace}
\nc{\mrho}{{\bm{\varrho}}}
\nc{\bfk}{\mathbf{K}}
\nc{\invlim}{\displaystyle{\lim_{\longleftarrow}}\,}
\nc{\ot}{\otimes}

\nc{\eval}[1]{\Big|_{#1}}

\newcommand{\be }{\begin{equation}}
\newcommand{\ee }{\end{equation}}

\newcommand{\g}{\mathfrak g}
\newcommand{\h}{\mathfrak h}

\nc{\RR}{\mathbb{R}}

\nc{\CC}{\mathbb{C}}

\newcommand{\huaL}{\mathcal{L}}
\newcommand{\huaR}{\mathcal{R}}

\newcommand{\huaG}{\mathcal{G}}

\newcommand{\huaI}{\mathcal{I}}

\newcommand{\huaO}{{\mathcal{O}}}

\newcommand{\frkc}{\mathfrak c}
\newcommand{\frkd}{\mathfrak d}

\newcommand{\Id}{{\rm{Id}}}

\newcommand{\br}[1]{   [ \cdot,    \cdot  ]   }

\newcommand{\Hom}{\mathrm{Hom}}

\newcommand{\gl}{\mathfrak {gl}}

\newcommand{\ad}{\mathrm{ad}}

\newcommand{\Img}{\mathrm{Im}}

\nc{\CV}{\mathbf{C}}

\newcommand{\hou}[1]{\textcolor{blue}{Hou: #1}}

\begin{document}
\title[Mock-pre-Lie bialgebras]{Mock-pre-Lie bialgebras}

\author{Shuai Hou}
\address{Department of Mathematics, Jilin University, Changchun 130012, Jilin, China}
\email{hshuaisun@jlu.edu.cn}
	
\author{Zafar Normatov}
\address{Department of Mathematics, Jilin University, Changchun 130012, Jilin, China}
\email{z.normatov@inbox.ru}

\author{Lina Song}
\address{Department of Mathematics, Jilin University, Changchun 130012, Jilin, China}
\email{songln@jlu.edu.cn}


\begin{abstract}
  In this paper, we systematically develop the theory of mock-pre-Lie bialgebras from multiple perspectives. We introduce the notion of a phase space of a mock-Lie algebra, and show that a mock-Lie algebra admits a phase space if and only if it is sub-adjacent to a mock-pre-Lie algebra. We introduce the notions of  Manin triples of mock-pre-Lie algebras and mock-pre-Lie bialgebras, and prove the equivalences between mock-pre-Lie bialgebras,  Manin triples of mock-pre-Lie algebras, certain matched pairs of mock-pre-Lie algebras, certain matched pairs of mock-Lie algebras and phase spaces of a mock-Lie algebra, which lays a theoretical foundation for subsequent research. Next, we investigate coboundary mock-pre-Lie bialgebras, and derive an analogue of the classical Yang-Baxter equation. In addition, we introduce two important special classes of mock-pre-Lie bialgebras: quasi-triangular mock-pre-Lie bialgebras and factorizable mock-pre-Lie bialgebras. We show that quasi-triangular mock-pre-Lie bialgebras naturally induce relative Rota-Baxter operators of weight $-1.$ Finally, we provide a new perspective for the study of triangular and factorizable mock-pre-Lie bialgebras by introducing the concept of quadratic Rota-Baxter mock-pre-Lie algebras of arbitrary weight.
\end{abstract}

\subjclass[2010]{17B38, 16T10,17C50}

\keywords{mock-pre-Lie algebra, mock-pre-Lie bialgebra, phase space, Manin triple, classical Yang-Baxter equation}

\maketitle

\tableofcontents

\allowdisplaybreaks


\section{Introduction}\label{sec:intr}
The purpose of this paper is to establish the bialgebra theory for mock-pre-Lie algebras from multiple perspectives,
and to focus on quasi-triangular and factorizable mock-pre-Lie bialgebras.

\subsection{Mock-Lie algebras and mock-pre-Lie algebras}
Mock-Lie algebras \cite{Zusmanovich}, also called Jacobi-Jordan algebras \cite{Burde-Fialowski}, are a class of commutative algebras satisfying the Jacobi identity. Jordan algebras were originally introduced in order to achieve an axiomatization for the algebra of observables in quantum mechanics \cite{JNW}.
 Mock-Lie algebras are a special case of Jordan algebras: a mock-Lie algebra is a Jordan algebra in any characteristic. Furthermore, the class of mock-Lie algebras coincides with the class of commutative nilalgebras of nilindex at most $3$. In addition, mock-Lie algebras can be regarded as a subclass of the Jordan-Lie superalgebras \cite{KO}. In recent years, there has been some related research on mock-Lie algebras.
In \cite{Agore-Militaru-1,Agore-Militaru}, the authors started from arbitrary $3$-nilpotent Jacobi-Jordan algebras to construct a new family of solutions to the quantum Yang-Baxter equation. Beyond that, they also constructed a non-abelian cohomological type object in order to classify certain mock-Lie algebras.
In \cite{Zusmanovich}, the author studied the problems concerning faithful representations of mock-Lie algebras.
The dual algebras of mock-Lie algebras have been investigated in \cite{CKLS}.
The cohomology theory of mock-Lie algebras has been studied in \cite{Benayadi,Yang-Yong}. In \cite{BBMM}, the authors introduced the notion of pre-Jacobi-Jordan algebras, which are also called mock-pre-Lie algebras and have a close relationship with mock-Lie admissible algebras. Some properties and double constructions of mock-pre-Lie algebras have been investigated in \cite{HH}. Furthermore, cohomologies and linear deformations of mock-pre-Lie algebras have been studied in \cite{Attan-Nabil}.

\subsection{Bialgebras}
For a given algebraic structure on a finite-dimensional vector space $\g$, to construct its corresponding bialgebra structure, one only needs to endow the dual space $\g^*$ with an algebraic structure of the same type, while imposing that the algebraic structures on $\g$ and $\g^*$ satisfy the corresponding compatibility conditions.
Beyond the direct construction of bialgebra structures, the significance of bialgebras also stems from the fact that the compatibility conditions admit two other equivalent formulations, namely, the equivalence relation among bialgebras, Manin triples and certain matched pairs. At present, many bialgebraic theories of algebra have been established and studied. V. G. Drinfel'd  introduced the notion of Lie bialgebras, which is closely related to Poisson-Lie groups \cite{K} and quantum universal enveloping algebras \cite{Drinfeld-cybe}. In \cite{Bai-chengming}, C. Bai developed the theory of pre-Lie bialgebras and established the connections with symplectic Lie algebras and phase spaces of Lie algebras.
The bialgebra theory of mock-Lie algebras has been investigated in \cite{BCHM,Cui-Hou,Ma-Sami-Makhlouf-Song}. The matched pair of mock pre-Lie algebras has also been studied in \cite{Na-Attan}. For further research on relevant bialgebra theory, refer to \cite{Benali,G3,Goncharov,Laraiedh-1,Laraiedh-2,Wang-Bai-Liu-Sheng}.

\subsection{Rota-Baxter operators and the classical Yang-Baxter equation}
Rota-Baxter operators on associative algebras originated in the study of fluctuation theory in probability theory. In recent years, Rota-Baxter operators have found wide-ranging applications, including Connes-Kreimer's work on the renormalization theory of quantum field theory \cite{CK}.
The theory of the Yang-Baxter equation was first discovered independently by the physicist C. N. Yang in 1967 \cite{Yang} and R. J. Baxter \cite{R.J.Baxter} in 1971, respectively. The classical Yang-Baxter equation can be regarded as the semi-classical limit of the quantum Yang-Baxter equation. Semenov-Tian-Shansky established the relationship between the classical Yang-Baxter equation and Rota-Baxter operators, showing that the operator form of the skew-symmetric classical Yang-Baxter equation is precisely corresponds to a Rota-Baxter operator of weight zero on a Lie algebra under certain conditions \cite{STS}.
B. A. Kupershmidt introduced the notion of $\huaO$-operators on Lie algebras \cite{Ku}, which are also called relative Rota-Baxter operators \cite{BBGN}. On the one hand, a skew-symmetric solution of the classical Yang-Baxter equation naturally gives rise to a relative Rota-Baxter operator with respect to the coadjoint representation \cite{Ku}. On the other hand, any relative Rota-Baxter operator can  give rise to a skew-symmetric solution of the classical Yang-Baxter equation \cite{Bai}. Rota-Baxter operators also play an important role in the theory of bialgebras \cite{Lang,RS}. For a more detailed introduction to Rota-Baxter operators, refer to \cite{Gub}.

\subsection{Main results and outline of the paper}
The purpose of this paper is to develop the theory of mock-pre-Lie bialgebras. First, we introduce the notion of
quadratic mock-pre-Lie algebras, and show that there is a one-to-one correspondence between symplectic mock-Lie algebras \cite{Baklouti-Benayadi} and quadratic mock-pre-Lie algebras. We study phase spaces of a mock-Lie algebra and prove that a mock-Lie algebra has a phase space if and only if it is sub-adjacent to a mock-pre-Lie algebra. We introduce the notion of a Manin triple of mock-pre-Lie algebras. Motivated by the general principle governing the equivalence between Manin triples and bialgebras, we further develop the theory of mock-pre-Lie bialgebras and establish the equivalence among Manin triples of mock-pre-Lie algebras, mock-pre-Lie bialgebras, the phase spaces of mock-Lie algebras, as well as the matched pair of mock-pre-Lie and mock-Lie algebras. We also introduce the notions of quasi-triangular and factorizable mock-pre-Lie bialgebras, showing that the former give rise to relative Rota-Baxter operators of weight $-1$. Finally, we use quadratic Rota-Baxter mock-pre-Lie algebras of weight $\lambda$ to characterize triangular and factorizable mock-pre-Lie bialgebras respectively.

The paper is organized as follows. In Section \ref{sec:I}, we introduce the notion of a phase space of a mock-Lie algebra, and characterize this structure in terms of mock-pre-Lie algebras and Manin triples of mock-pre-Lie algebras. In Section \ref{sec:II}, we introduce the notion of mock-pre-Lie bialgebras and establish their equivalence with Manin triples of mock-pre-Lie algebras, phase spaces of the sub-adjacent Lie algebras, as well as certain matched pairs of mock-pre-Lie and mock-Lie algebras. We also study coboundary mock-pre-Lie bialgebras, leading to an analogue of the classical Yang-Baxter equation. In Section \ref{sec:III}, we introduce the notion of quasi-triangular mock-pre-Lie bialgebras, which can give rise to a relative Rota-Baxter operator of weight $-1$. In Section \ref{sec:IV}, we focus on quadratic Rota-Baxter mock-pre-Lie algebras and show that factorizable and triangular mock-pre-Lie bialgebras
can be induced by such algebras.

Throughout this paper, all vector spaces are finite-dimensional over an algebraically closed field $\mathbb K$ of characteristic $0$. For any vector space $V$, $V^*=\Hom(V,\mathbb K)$ denotes its dual.
\vspace{2mm}

{\bf Acknowledgements. }  The first and last authors were supported by NSFC (12301034, 12461004, W2412041).
We express warmest thanks to Professor Yunhe Sheng for his valuable suggestions and  helpful discussions.

\section{The phase space of mock-Lie algebras and Manin triples of mock-pre-Lie algebras} \label{sec:I}
In this section, first we introduce the notion of  a phase space of a mock-Lie algebra. Then we prove that a mock-Lie algebra admits a phase space if and only if it is sub-adjacent to a mock-pre-Lie algebra. Moreover, we introduce the notion of a Manin triple of mock-pre-Lie algebras and show that there is a one-to-one correspondence between the phase spaces of mock-Lie algebras and Manin triples of mock-pre-Lie algebras.

To begin with, we recall some fundamental results concerning mock-Lie algebras and mock-pre-Lie algebras.

\begin{defi}\label{mock-Lie}{\rm(\cite{BBMM})}
A \textbf{mock-Lie algebra} is a vector space $\g$ together with a symmetric bilinear map $\{\cdot,\cdot\}_{\g}:\g \otimes \g \to \g$
such that
\begin{eqnarray}
    \{x,y\}_{\g}-\{y,x\}_{\g}&=&0 ,\\
    \{x,\{y,z\}_{\g}\}_{\g}+\{y,\{z,x\}_{\g}\}_{\g}+\{z,\{x,y\}_{\g}\}_{\g}&=&0,\quad \quad \forall x,y,z\in \g.
\end{eqnarray}
\end{defi}

\begin{defi}{\rm(\cite{Benayadi})}
A representation of a mock-Lie algebra $(\g, \{\cdot,\cdot\}_{\g})$ is a pair $(V;\rho),$
where $V$ is a vector space and $\rho : \g \rightarrow \gl(V)$ is a linear map such that
\begin{eqnarray}\label{rep-mockLie}
\rho(\{x,y\}_{\g}) = -\rho(x)\rho(y) - \rho(y)\rho(x),\quad \forall x,y \in \g.
\end{eqnarray}
\end{defi}

\begin{lem}\label{dual-rep-mockLie}
 Let $(V;\rho)$ be a representation of a mock-Lie algebra $(\g, \{\cdot,\cdot\}_{\g}).$  Define $\rho^*:\g\rightarrow\gl(V^*)$ by
 $$\langle\rho^*(x)\alpha,u\rangle=-\langle\alpha,\rho(x)u\rangle,\quad \forall x\in \g,\alpha\in V^*,u\in V.$$
 Then $(V^*;-\rho^*)$ is a representation of $(\g, \{\cdot,\cdot\}_{\g}),$ which is called the dual representation of $(V;\rho).$
\end{lem}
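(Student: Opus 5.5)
We have to check the representation identity \eqref{rep-mockLie} for the map $\tilde\rho:=-\rho^*:\g\to\gl(V^*)$, that is,
$$\tilde\rho(\{x,y\}_{\g})=-\tilde\rho(x)\tilde\rho(y)-\tilde\rho(y)\tilde\rho(x),\quad\forall x,y\in\g.$$
Since $V$ is finite-dimensional, two elements of $V^*$ coincide as soon as they have the same pairing with every $u\in V$. So the plan is to pair both sides with an arbitrary $u\in V$ and move all operators onto $u$ using the definition of $\rho^*$.

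First, $\langle\tilde\rho(x)\alpha,u\rangle=-\langle\rho^*(x)\alpha,u\rangle=\langle\alpha,\rho(x)u\rangle$. In other words, $\tilde\rho(x)$ is just the transpose $\rho(x)^{T}$. Applying this twice gives
$$\langle\tilde\rho(x)\tilde\rho(y)\alpha,u\rangle=\langle\tilde\rho(y)\alpha,\rho(x)u\rangle=\langle\alpha,\rho(y)\rho(x)u\rangle.$$
So composition of the transposes reverses the order of the factors.

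For the left-hand side, using \eqref{rep-mockLie} for $\rho$ we get
$$\langle\tilde\rho(\{x,y\}_{\g})\alpha,u\rangle=\langle\alpha,\rho(\{x,y\}_{\g})u\rangle=-\langle\alpha,(\rho(x)\rho(y)+\rho(y)\rho(x))u\rangle.$$
For the right-hand side, the order-reversal formula gives
$$-\langle\alpha,(\rho(y)\rho(x)+\rho(x)\rho(y))u\rangle.$$
The anticommutator $\rho(x)\rho(y)+\rho(y)\rho(x)$ is symmetric in $x$ and $y$, so the reversal of order does no harm. The two sides therefore agree, and $(V^*;-\rho^*)$ is a representation.

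The only point needing care is the sign bookkeeping. With the paper's convention, $\rho^*$ carries a minus sign, so it is $-\rho^*$, not $\rho^*$, that equals the plain transpose. The relation \eqref{rep-mockLie} is quadratic on the right and linear on the left, so it is not invariant under $\rho\mapsto-\rho$. Hence using $\rho^*$ itself would fail, and one must check that the signs cancel exactly for $-\rho^*$.
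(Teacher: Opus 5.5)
Your argument is correct: $-\rho^*(x)$ is the transpose of $\rho(x)$, transposition reverses the order of products, and the symmetry of $\rho(x)\rho(y)+\rho(y)\rho(x)$ in $x,y$ absorbs that reversal. The paper states this lemma without proof, but your pairing argument is the same computation it carries out for $\rho+\mu$ in the proof of Proposition~\ref{dual-rep-mockpre}; the only quibble is that you do not need finite-dimensionality to conclude equality in $V^*$ from equality of all pairings with $u\in V$.
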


\begin{ex}\label{ad-coad}
 Let $(\g,\{\cdot,\cdot\}_{\g})$ be a mock-Lie algebra. Define $\ad:\g\rightarrow\gl(\g)$ by
 $$\ad_{x}y:=\{x,y\}_{\g},\quad \forall x,y\in \g.$$
 Then $(\g;\ad)$ is a representation of $\g$ on itself, which is called the adjoint representation. Furthermore, $(\g^*;-\ad^*)$ is also a representation of $\g,$ which is called the coadjoint representation.
\end{ex}

\begin{lem}{\rm(\cite{Benayadi})}
 Let $(\g,\{\cdot,\cdot\}_{\g})$ be a mock-Lie algebra. Then $(V;\rho)$ is a representation of
 $(\g,\{\cdot,\cdot\}_{\g})$ if and only if there is a mock-Lie algebra structure (called the semi-direct product) on the direct sum $\g\oplus V$ together with the multiplication defined by
 \begin{eqnarray}\label{semi-mock-lie}
   \{x+u,y+v\}_{\ltimes}:=\{x,y\}_{\g}+\rho(x)v+\rho(y)u,\quad\forall x,y\in\g,u,v\in V.
 \end{eqnarray}
  This semi-direct product mock-Lie algebra is denoted by $\g\ltimes_{\rho}V.$
\end{lem}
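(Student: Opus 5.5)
We verify directly that the bracket $\{\cdot,\cdot\}_{\ltimes}$ defined in \eqref{semi-mock-lie} satisfies the two axioms of Definition \ref{mock-Lie} exactly when \eqref{rep-mockLie} holds. Symmetry is immediate: $\{x+u,y+v\}_{\ltimes}=\{x,y\}_{\g}+\rho(x)v+\rho(y)u$ is visibly invariant under swapping $x+u$ with $y+v$, since $\{\cdot,\cdot\}_{\g}$ is symmetric. So everything reduces to the Jacobi identity.

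For the Jacobi identity, take $X=x+u$, $Y=y+v$, $Z=z+w$. First compute
\[
\{X,\{Y,Z\}_{\ltimes}\}_{\ltimes}=\{x,\{y,z\}_{\g}\}_{\g}+\rho(x)\rho(y)w+\rho(x)\rho(z)v+\rho(\{y,z\}_{\g})u.
\]
Summing cyclically, the $\g$-component is the Jacobi sum in $\g$, which vanishes. In the $V$-component, collect the terms containing $u$; by trilinearity the $v$ and $w$ terms are cyclic permutations of these. The terms containing $u$ come from the first summand and from the two cyclic shifts, namely $\rho(\{y,z\}_{\g})u$, $\rho(y)\rho(z)u$ and $\rho(z)\rho(y)u$. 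Hence the Jacobi identity on $\g\oplus V$ is equivalent to
\[
\big(\rho(\{y,z\}_{\g})+\rho(y)\rho(z)+\rho(z)\rho(y)\big)u=0\quad\forall y,z\in\g,\ u\in V.
\]
This is exactly \eqref{rep-mockLie}.

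This gives both directions at once. If $(V;\rho)$ is a representation, every term vanishes and $\g\ltimes_\rho V$ is a mock-Lie algebra. Conversely, if $\g\oplus V$ is a mock-Lie algebra under this bracket, specialize to $X=u\in V$ and $Y=y$, $Z=z\in\g$. The $V$-component then yields precisely the displayed identity, so $\rho$ is a representation. The only delicate point is the bookkeeping of which mixed terms survive in the cyclic sum. Terms with two or more entries from $V$ vanish because $\{V,V\}_{\ltimes}=0$, so no further conditions arise.
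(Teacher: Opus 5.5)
Your proof is correct and complete. Symmetry of $\{\cdot,\cdot\}_{\ltimes}$ is automatic, and the $\g$-component of the cyclic sum is just the Jacobi identity of $\g$. The $V$-component splits into three cyclic copies (in $u$, $v$, $w$) of $\big(\rho(\{y,z\}_{\g})+\rho(y)\rho(z)+\rho(z)\rho(y)\big)u$, which is exactly \eqref{rep-mockLie}, and specializing to $X=u$, $Y=y$, $Z=z$ gives the converse.

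The paper itself gives no proof of this lemma; it is quoted from \cite{Benayadi}. So there is no argument in the paper to compare against, and your direct verification supplies the standard computation the citation stands in for.
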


\begin{defi}{\rm(\cite{BCHM})}
Let $(\g, \{\cdot,\cdot\}_{\g})$ be a mock-Lie algebra and $(V;\rho)$ a representation. A linear map $T : V \rightarrow \g$ is called a relative Rota-Baxter operator of weight $0$ with respect to $(V;\rho)$
if $T$ satisfies
\begin{eqnarray*}
\{Tu,Tv\}_{\g} = T(\rho(Tu)v + \rho(Tv)u), \quad \forall u, v \in V.
\end{eqnarray*}
\end{defi}

The notion of mock-pre-Lie algebras (also called pre-Jacobi-Jordan algebras) was introduced in \cite{BBMM}.
\begin{defi}\label{mock-preLie}
A \textbf{mock-pre-Lie algebra} is a vector space $\g$ together with a bilinear multiplication $\cdot_{\g}: \g \otimes \g \to  \g$, such that for any $x,y,z\in \g$, the anti-associator $Aass(x,y,z)=(x\cdot_{\g} y)\cdot_{\g} z+x\cdot_{\g} (y\cdot_{\g} z)$ is skew-symmetric in $x,y,$ i.e. $Aass(x,y,z)=-Aass(y,x,z)$ or equivalently,
\begin{eqnarray}\label{defi-mock-preLie-equation}
(x\cdot_{\g} y)\cdot_{\g} z+x\cdot_{\g} (y\cdot_{\g} z)=-(y\cdot_{\g} x)\cdot_{\g} z-y\cdot_{\g}(x\cdot_{\g} z).
\end{eqnarray}
\end{defi}

\begin{pro}
\label{pre-jj to jj}{\rm(\cite{BBMM})} Let $(\g, \cdot_{\g})$ be a mock-pre-Lie algebra.  Then the operation
\begin{equation}\label{commutator}
\{x,y\}_{c}:=x\cdot_{\g} y + y\cdot_{\g} x,\quad\forall x,y\in \g
\end{equation} defines a mock-Lie algebra $(\g,\{\cdot,\cdot\}_{c})$, which is called the {\bf sub-adjacent mock-Lie algebra} of $(\g, \cdot_{\g})$ and denoted by $\g_c.$  $(\g, \cdot_{\g})$ is called a {\bf compatible mock-pre-Lie algebra}
on the mock-Lie algebra  $\g_c$. Furthermore, for $x\in \g,$ the linear map  $L:\g\rightarrow\gl(\g)$ defined by $L_{x}y = x\cdot_{\g} y$ gives a representation of the mock-Lie algebra $\g_c$ on $\g.$
\end{pro}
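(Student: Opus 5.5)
The plan has two parts. First I show that $L$ is a representation of $\g_c$ on $\g$, i.e. $L_{\{x,y\}_c}=-L_xL_y-L_yL_x$. Then I deduce the mock-Lie axioms for $\{\cdot,\cdot\}_c$ from this identity.

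For the representation property, I rewrite the defining identity \eqref{defi-mock-preLie-equation} as
$$(x\cdot_\g y)\cdot_\g z+(y\cdot_\g x)\cdot_\g z=-x\cdot_\g(y\cdot_\g z)-y\cdot_\g(x\cdot_\g z).$$
The left side is $L_{\{x,y\}_c}z$ and the right side is $-(L_xL_y+L_yL_x)z$. So the identity is literally equation \eqref{rep-mockLie} for $\rho=L$, once we know $\g_c$ is a mock-Lie algebra.

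Next I treat the mock-Lie structure. Symmetry of $\{\cdot,\cdot\}_c$ is immediate from \eqref{commutator}. For the Jacobi identity I expand
$$J:=\{x,\{y,z\}_c\}_c+\mathrm{cyclic}$$
into twelve terms. Each $\{x,\{y,z\}_c\}_c$ splits into terms of the form $x\cdot(y\cdot z)$, $x\cdot(z\cdot y)$, $(y\cdot z)\cdot x$ and $(z\cdot y)\cdot x$. Summing cyclically, I group the terms by which element sits in the outermost right position.

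For the right-position element $x$, the contributing terms are $(y\cdot z)\cdot x+(z\cdot y)\cdot x+y\cdot(z\cdot x)+z\cdot(y\cdot x)$. The first two come from $\{x,\{y,z\}_c\}_c$. The last two are $y\cdot(z\cdot x)$ from $\{y,\{z,x\}_c\}_c$ and $z\cdot(y\cdot x)$ from $\{z,\{x,y\}_c\}_c$. This grouping is exactly $\mathrm{Aass}(y,z,x)+\mathrm{Aass}(z,y,x)$, which vanishes by skew-symmetry of the anti-associator. The same holds cyclically for $y$ and $z$, so $J=0$.

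The main point to verify is that this grouping exhausts all twelve terms with correct multiplicities. There are three groups of four terms each, which I would double-check by listing them. With the Jacobi identity established, the first part shows that $L$ satisfies \eqref{rep-mockLie}, completing the proof.
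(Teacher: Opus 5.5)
Your proposal is correct. The defining identity \eqref{defi-mock-preLie-equation} is literally $L_{\{x,y\}_c}=-L_xL_y-L_yL_x$. The twelve terms of the cyclic Jacobi sum do split by their last letter into exactly three groups of four. For instance, the group ending in $z$ is $(x\cdot_\g y)\cdot_\g z+(y\cdot_\g x)\cdot_\g z+x\cdot_\g(y\cdot_\g z)+y\cdot_\g(x\cdot_\g z)=\mathrm{Aass}(x,y,z)+\mathrm{Aass}(y,x,z)=0$, and the other two groups are its cyclic images. The paper gives no proof of this proposition, since it is quoted from \cite{BBMM}, and your direct verification is the standard argument.
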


\begin{ex}\label{direct-sum-mock-pre}
Let $(\g,\cdot_{\g})$ and $(\h,\cdot_{\h})$ be two mock-pre-Lie algebras. Then $(\g\oplus\h,\cdot_{\oplus})$ is a mock-pre-Lie algebra,
  where the multiplication $\cdot_{\oplus}$ is defined by
\begin{eqnarray*}
 (x_1,y_1)\cdot_{\oplus}(x_2, y_2):=(x_1\cdot_{\g} x_2,y_1\cdot_{\h} y_2),\quad \forall x_1,x_2\in \g,y_1,y_2\in \h.
\end{eqnarray*}
This mock-pre-Lie algebra is  called the direct sum mock-pre-Lie algebra.
\end{ex}

\begin{defi}{\rm(\cite{Attan-Nabil})}
A {\bf representation} of a mock-pre-Lie algebra $(\g,\cdot_{\g})$ is a triple $(V;\rho,\mu),$ where $V$ is a vector space, $\rho:\g\rightarrow \gl(V)$ is a representation of the sub-adjacent mock-Lie algebra $\g_c$ on $V$ and $\mu: \g\rightarrow \gl(V)$ is a linear map satisfying
\begin{eqnarray}\label{mock-pre-rep}
\rho(x)\mu(y)+\mu(y)\rho(x)=-\mu(x\cdot_{\g} y)-\mu(y)\mu(x),\quad \forall x,y \in \g.
\end{eqnarray}
\end{defi}

The following semidirect product characterization of representation was given in \cite{Attan-Nabil}.

\begin{lem}{\rm(\cite{Attan-Nabil})}
Let $(\g,\cdot)$ be a mock-pre-Lie algebra, $V$ is a vector space and $\rho,\mu:\g\rightarrow\gl(V)$ are linear maps. Then $(V;\rho,\mu)$ is a representation of a mock-pre-Lie algebra $(\g,\cdot_{\g})$ if
and only if there is a mock-pre-Lie algebra structure on the direct sum of vector spaces $\g\oplus V$ with the multiplication  defined by
\begin{equation}
(x+u)\cdot_{(\rho,\mu)}(y+v):=x\cdot_{\g} y+\rho(x)v+\mu(y)u,\quad \forall x,y\in \g, u,v\in V.
\end{equation} This semidirect product mock-pre-Lie algebra is denoted by $\g\ltimes_{\rho,\mu}V$.
\end{lem}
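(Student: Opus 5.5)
The plan is to verify the mock-pre-Lie identity \eqref{defi-mock-preLie-equation} for the product $\cdot_{(\rho,\mu)}$ directly, and to split it by components. The identity is trilinear, so it suffices to check it on triples of homogeneous elements, each lying either in $\g$ or in $V$. Any triple with two or more entries in $V$ gives zero on both sides, because $V\cdot V=0$ and $V$ is an ideal. The triple $(x,y,z)$ with all entries in $\g$ gives exactly the mock-pre-Lie identity of $(\g,\cdot_\g)$. So the content sits in the three triples having exactly one entry in $V$.

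For the triple $(x,y,w)$ with $w\in V$, expanding gives
\[
\rho(x\cdot_\g y)w+\rho(x)\rho(y)w=-\rho(y\cdot_\g x)w-\rho(y)\rho(x)w .
\]
Rearranged, this is $\rho(\{x,y\}_c)=-\rho(x)\rho(y)-\rho(y)\rho(x)$, i.e.\ \eqref{rep-mockLie} for $\g_c$. For the triple $(x,v,z)$ with $v\in V$, the left side is $\mu(z)\rho(x)v+\rho(x)\mu(z)v$ and the right side is $-\mu(z)\mu(x)v-\mu(x\cdot_\g z)v$. This is precisely \eqref{mock-pre-rep} with $y$ replaced by $z$. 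The triple $(v,y,z)$ is the mirror image of $(x,v,z)$ under swapping the first two arguments, so it yields the same equation and needs no separate treatment.

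Both directions then follow at once. If $(V;\rho,\mu)$ is a representation, every component identity holds, so the semidirect product is mock-pre-Lie. Conversely, if $\g\oplus V$ is mock-pre-Lie, restricting to the mixed triples recovers \eqref{rep-mockLie} and \eqref{mock-pre-rep}. The only care needed is the bookkeeping in the $(x,v,z)$ case: one must track which of $\rho$ and $\mu$ acts, and in what order, when $v$ sits in the middle slot. Everything else is routine.
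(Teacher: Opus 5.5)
Your proof is correct. Checking the identity componentwise by trilinearity is the standard direct verification: the $(x,y,w)$ triple yields \eqref{rep-mockLie} for $\g_c$, the $(x,v,z)$ and $(v,y,z)$ triples both yield \eqref{mock-pre-rep}, and the remaining triples are trivial. The paper gives no proof of its own here (it quotes the lemma from Attan--Nabil), so there is no different route to compare against.
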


\begin{ex}\label{regular-representation-mock-pre-Lie}
Let $(\g,\cdot_{\g})$ be a mock-pre-Lie algebra. Then $(\g; L, R)$ is a representation of $(\g,\cdot_{\g}),$ which is called the {\bf regular representation}, where  $R:\g\rightarrow\gl(\g)$ is the right multiplication defined by $R_{x}y=y\cdot_{\g} x,$ for all $x, y\in \g$. If there is a mock-Lie algebra structure on the dual space $\g^*,$ we denote the left and right multiplications by $\huaL$ and $\huaR$ respectively.
\end{ex}

Next we consider the dual representation of a mock-pre-Lie algebra. Let $(V;\rho,\mu)$ be a representation of a mock-pre-Lie algebra $(\g,\cdot_{\g})$. For all $x\in\g,u\in V,\xi\in V^*,$ define
$\rho^*:\g\rightarrow\gl(V^*)$ and $\mu^*:\g\rightarrow\gl(V^*)$ by
\begin{eqnarray*}
  \langle \rho^{*}(x)\xi, u\rangle = -\langle \xi, \rho(x)u\rangle, \quad
  \langle \mu^*(x)\xi, u\rangle = -\langle \xi, \mu(x)u\rangle.
\end{eqnarray*}
\begin{pro}\label{dual-rep-mockpre}
Let $(V;\rho,\mu)$ be a representation of a mock-pre-Lie algebra $(\g,\cdot_{\g})$. Then
$$(V^*;-\rho^*-\mu^*,\mu^*)$$
is a representation of $(\g,\cdot_{\g})$, which is called the {\bf dual representation} of $(V;\rho,\mu).$
\end{pro}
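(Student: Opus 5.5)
We must check two conditions for the pair $(\rho',\mu'):=(-\rho^*-\mu^*,\mu^*)$: that $\rho'$ is a representation of the sub-adjacent mock-Lie algebra $\g_c$ on $V^*$, and that \eqref{mock-pre-rep} holds for $(\rho',\mu')$. The plan is to dualize the defining identities of $(V;\rho,\mu)$. Since $\langle \rho^*(x)\xi,u\rangle=-\langle\xi,\rho(x)u\rangle$, we have $\rho^*(x)=-\rho(x)^{T}$ and $\mu^*(x)=-\mu(x)^T$, where $A^T$ denotes the transpose. Hence $\rho'(x)=(\rho(x)+\mu(x))^T$ and $\mu'(x)=-\mu(x)^T$, and products of transposes reverse order: $(AB)^T=B^TA^T$. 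Both conditions then become operator identities on $V$, obtained by transposing and to be checked using only \eqref{rep-mockLie} for $\rho$ and \eqref{mock-pre-rep}.

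\textbf{Step 1: $\rho'$ is a representation of $\g_c$.} We need
$$(\rho+\mu)(x\cdot y+y\cdot x)=-(\rho(x)+\mu(x))(\rho(y)+\mu(y))-(\rho(y)+\mu(y))(\rho(x)+\mu(x))$$
on $V$, since transposition reverses the order but the right-hand side is symmetric in $x,y$. Expand the right-hand side. The terms $-\rho(x)\rho(y)-\rho(y)\rho(x)$ equal $\rho(\{x,y\}_c)$ by \eqref{rep-mockLie}. The remaining terms are
$$-\big[\rho(x)\mu(y)+\mu(y)\rho(x)+\mu(y)\mu(x)\big]-\big[\rho(y)\mu(x)+\mu(x)\rho(y)+\mu(x)\mu(y)\big].$$
By \eqref{mock-pre-rep} these brackets equal $-\mu(x\cdot y)$ and $-\mu(y\cdot x)$, so the remaining terms sum to $\mu(x\cdot y)+\mu(y\cdot x)=\mu(\{x,y\}_c)$. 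This gives the identity, and it is the key cancellation.

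\textbf{Step 2: the compatibility \eqref{mock-pre-rep} for $(\rho',\mu')$.} We need
$$\rho'(x)\mu'(y)+\mu'(y)\rho'(x)=-\mu'(x\cdot y)-\mu'(y)\mu'(x).$$
The left-hand side is $-(\mu(y)(\rho(x)+\mu(x))+(\rho(x)+\mu(x))\mu(y))^T$. The right-hand side is $(\mu(x\cdot y))^T-(\mu(x)\mu(y))^T$. So it suffices to show
$$\mu(y)\rho(x)+\mu(y)\mu(x)+\rho(x)\mu(y)+\mu(x)\mu(y)=-\mu(x\cdot y)+\mu(x)\mu(y).$$
After cancelling $\mu(x)\mu(y)$ from both sides, this is exactly \eqref{mock-pre-rep}.

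The only real obstacle is keeping the sign conventions and the order reversal under transposition straight. Step 1 is where the nontrivial cancellation occurs, and it uses \eqref{mock-pre-rep} twice, once with $x,y$ swapped.
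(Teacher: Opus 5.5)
Your proof is correct and takes essentially the same approach as the paper: both dualize the two defining identities (the paper by pairing with $u\in V$, you via transposes) and reduce them to \eqref{rep-mockLie} together with \eqref{mock-pre-rep}, used twice (once with $x,y$ swapped) in Step 1 and once in Step 2. Your write-up also spells out the Step 1 cancellation, which the paper leaves implicit.
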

\begin{proof}
For all $x,y\in \g,u\in V,\xi\in V^*,$ by \eqref{rep-mockLie} and \eqref{mock-pre-rep}, we have
\begin{eqnarray*}
&&\langle(-\rho^*-\mu^*)(\{x,y\}_{c})\xi+(-\rho^*-\mu^*)(x)(-\rho^*-\mu^*)(y)\xi+(-\rho^*-\mu^*)(y)(-\rho^*-\mu^*)(x)\xi,u\rangle\\
&=&\langle \xi, (\rho(\{x,y\}_{c})+\mu(\{x,y\}_{c})+\rho(y)\rho(x)+\rho(y)\mu(x)+\mu(y)\rho(x)\\
&&+\mu(y)\mu(x)+\rho(x)\rho(y)+\rho(x)\mu(y)+\mu(x)\rho(y)+\mu(x)\mu(y))u\rangle\\
&=&0,
\end{eqnarray*}
which implies that $-\rho^*-\mu^*$ is a representation of the sub-adjacent mock-Lie algebra $\g_{c}.$
Moreover,
\begin{eqnarray*}
&&\langle (-\rho^*-\mu^*)(x)\mu^*(y)\xi+\mu^*(y)(-\rho^*-\mu^*)(x)\xi+\mu^*(x\cdot_{\g} y)\xi+\mu^*(y)\mu^*(x)\xi,u\rangle\\
&=&\langle \xi, (\mu(x)\mu(y)-\mu(y)(\rho(x)+\mu(x))-(\rho(x)+\mu(x))\mu(y)-\mu(x\cdot_{\g} y))u \rangle\\
&=&0,
\end{eqnarray*}
implies that \eqref{mock-pre-rep} holds.
Therefore, $(V^*;-\rho^*-\mu^*,\mu^*)$ is a representation of $(\g,\cdot_{\g})$.
\end{proof}

\begin{ex}
{\rm Let $(\g,\cdot_{\g})$ be a mock-pre-Lie algebra. By Proposition \ref{dual-rep-mockpre}, $(\g^*;-L^*-R^*,R^*)$ is a representation of $(\g,\cdot_{\g})$, which is called the {\bf coregular representation}, where two linear maps $L^*,R^*:\g\rightarrow\gl(\g^*)$ are respectively defined by
\begin{eqnarray*}
  \langle L^*_x(\xi),y\rangle=-\langle\xi,x\cdot_{\g} y\rangle,\quad \langle R^*_x(\xi),y \rangle=-\langle \xi,y\cdot_{\g} x\rangle,\quad \forall x,y\in \g,\xi\in \g^*.
\end{eqnarray*}}
\end{ex}

\begin{defi}{\rm(\cite{Baklouti-Benayadi})}
A \textbf{symplectic structure} on a mock-Lie algebra $(\g,\{\cdot,\cdot\}_{\g})$ is a non-degenerate
skew-symmetric bilinear form $\omega\in \wedge^2 \g^*$ satisfying
\begin{eqnarray}\label{symplectic-mock-Lie}
\omega(\{x,y\}_{\g}, z) + \omega(\{y,z\}_{\g}, x) + \omega(\{z,x\}_{\g}, y) = 0, \quad \forall x, y, z \in \g.
\end{eqnarray}
\end{defi}
A mock-Lie algebra $\g$ is called \textbf{symplectic mock-Lie algebra} if it is endowed with a symplectic
form $\omega$, and it is denoted by $(\g,\omega)$.

\begin{pro}{\rm(\cite{Baklouti-Benayadi}\label{sym-mock-pre})}
Let $(\g,\omega)$ be a symplectic mock-Lie algebra. Then there exists a compatible mock-pre-Lie algebra structure $(\g,\cdot_{\g})$ given by
\begin{eqnarray}\label{omega-Mpre}
\omega(x \cdot_{\g} y, z) = \omega(y, \{x,z\}_{\g}), \quad
\forall x, y, z \in \g.
\end{eqnarray}
\end{pro}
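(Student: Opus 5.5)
Define the product through the nondegeneracy of $\omega$: for fixed $x,y$, the map $z\mapsto \omega(y,\{x,z\}_{\g})$ is a linear functional on $\g$, so there is a unique element $x\cdot_{\g} y$ with $\omega(x\cdot_{\g} y,z)=\omega(y,\{x,z\}_{\g})$ for all $z$. Bilinearity of $\cdot_{\g}$ is then immediate. Two things remain to be checked: that the sub-adjacent bracket is the given one, i.e. $x\cdot_{\g} y+y\cdot_{\g} x=\{x,y\}_{\g}$, and that \eqref{defi-mock-preLie-equation} holds.

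\textbf{Compatibility.} Pair with an arbitrary $z$. Using the definition and skew-symmetry of $\omega$,
$$\omega(x\cdot_{\g} y+y\cdot_{\g} x,z)=\omega(y,\{x,z\}_{\g})+\omega(x,\{y,z\}_{\g})=-\omega(\{z,x\}_{\g},y)-\omega(\{y,z\}_{\g},x).$$
By \eqref{symplectic-mock-Lie} this equals $\omega(\{x,y\}_{\g},z)$. Nondegeneracy then gives $\{x,y\}_c=\{x,y\}_{\g}$.

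\textbf{Mock-pre-Lie identity.} Pair $Aass(x,y,z)+Aass(y,x,z)$ with an arbitrary $w$ and move every left multiplication across $\omega$ via the defining relation.
\begin{itemize}
\item For the terms of the form $(a\cdot_{\g} b)\cdot_{\g} z$, apply the definition with $a\cdot_{\g} b$ in the first slot. This gives $\omega(z,\{x\cdot_{\g} y,w\}_{\g})$, and $\omega(z,\{y\cdot_{\g} x,w\}_{\g})$ for the swapped term.
\item For the terms $x\cdot_{\g}(y\cdot_{\g} z)$, apply the definition twice: $\omega(x\cdot_{\g}(y\cdot_{\g} z),w)=\omega(y\cdot_{\g} z,\{x,w\}_{\g})=\omega(z,\{y,\{x,w\}_{\g}\}_{\g})$.
\end{itemize}
Adding the four contributions and using bilinearity of the bracket together with compatibility, $x\cdot_{\g} y+y\cdot_{\g} x=\{x,y\}_{\g}$, the total is
$$\omega\big(z,\{\{x,y\}_{\g},w\}_{\g}+\{y,\{x,w\}_{\g}\}_{\g}+\{x,\{y,w\}_{\g}\}_{\g}\big).$$
Using commutativity, the inner expression is exactly the Jacobi identity of Definition \ref{mock-Lie} applied to the triple $(w,x,y)$, so it vanishes. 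Nondegeneracy of $\omega$ then yields \eqref{defi-mock-preLie-equation}.

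The main obstacle is bookkeeping rather than any conceptual step. One must keep track of the signs from the skew-symmetry of $\omega$, and make sure the terms group into a genuine cyclic Jacobi sum rather than a near-miss. I would first fix all occurrences of $z$ in the first slot of $\omega$ before comparing terms. The compatibility step must come first, because it is what merges $(x\cdot_{\g} y)\cdot_{\g} z+(y\cdot_{\g} x)\cdot_{\g} z$ into a single bracket.
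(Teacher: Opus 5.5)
Your proof is correct and complete. Nondegeneracy makes the product well defined. The compatibility computation $\omega(x\cdot_{\g}y+y\cdot_{\g}x,z)=\omega(\{x,y\}_{\g},z)$ uses exactly the symplectic identity \eqref{symplectic-mock-Lie}. Once compatibility merges the two terms of the form $(a\cdot_{\g}b)\cdot_{\g}z$ into one bracket, what remains is the Jacobi identity for $(w,x,y)$, with all signs correct.

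The paper gives no proof of its own here; it quotes the result from \cite{Baklouti-Benayadi}, so there is no in-paper argument to compare against. Your direct verification is the standard one. Your compatibility step is the computation in the first half of the proof of Proposition~\ref{symplectic-quadratic}, read in the opposite direction. It also supplies the fact that the converse half of that proof uses tacitly: the product \eqref{omega-Mpre} satisfies the invariance condition \eqref{quadratic-mock-pre} with respect to $\{\cdot,\cdot\}_c$.
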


Next we introduce the concept of quadratic mock-pre-Lie algebras, which are not only closely related to symplectic mock-Lie algebras, but also serve as the natural background for Manin triples of mock-pre-Lie algebras.
\begin{defi}
A {\bf quadratic mock-pre-Lie algebra} $(\g,\cdot_{\g},\omega)$ is a mock-pre-Lie algebra $(\g,\cdot_{\g})$ equipped with a nondegenerate skew-symmetric bilinear form $\omega\in \wedge^2 \g^*$ such that the following invariant condition holds:
\begin{eqnarray}\label{quadratic-mock-pre}
\omega(x \cdot_{\g} y, z) = \omega(y, \{x,z\}_{c}), \quad \forall x, y, z \in \g.
\end{eqnarray}
\end{defi}
\begin{rmk}
Actually, by \eqref{quadratic-mock-pre}, we also have
\begin{eqnarray}\label{quadratic-mock-pre-2}
\omega(x \cdot_{\g} y, z) =\omega(z\cdot_{\g}y,x).
\end{eqnarray}
\end{rmk}
Proposition \ref{sym-mock-pre} tells us that quadratic mock-pre-Lie algebras are the underlying structures of symplectic mock-Lie algebras.

\begin{pro}\label{symplectic-quadratic}
There is a one-to-one correspondence between symplectic mock-Lie algebras and quadratic mock-pre-Lie algebras.
\end{pro}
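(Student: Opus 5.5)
We construct two maps and check that they are mutually inverse. In one direction, start from a symplectic mock-Lie algebra $(\g,\{\cdot,\cdot\}_{\g},\omega)$. Proposition \ref{sym-mock-pre} gives a compatible mock-pre-Lie algebra $(\g,\cdot_{\g})$ defined by \eqref{omega-Mpre}. Since this structure is compatible, its sub-adjacent mock-Lie algebra is the original one, so $\{x,z\}_c=\{x,z\}_{\g}$. Then \eqref{omega-Mpre} is exactly \eqref{quadratic-mock-pre}, and $(\g,\cdot_{\g},\omega)$ is a quadratic mock-pre-Lie algebra. To be safe, we will also check compatibility directly. Using skew-symmetry of $\omega$ and \eqref{omega-Mpre}, we get
$$\omega(x\cdot y+y\cdot x,z)=\omega(y,\{x,z\})+\omega(x,\{y,z\}).$$
By the symplectic condition \eqref{symplectic-mock-Lie} together with commutativity, this equals $\omega(\{x,y\},z)$. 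Nondegeneracy of $\omega$ then yields $\{x,y\}_c=\{x,y\}_{\g}$.

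In the other direction, start from a quadratic mock-pre-Lie algebra $(\g,\cdot_{\g},\omega)$ and take its sub-adjacent mock-Lie algebra $\g_c$ from Proposition \ref{pre-jj to jj}. We must show that $\omega$ is symplectic on $\g_c$, i.e. that the cyclic sum $\omega(\{x,y\}_c,z)+\omega(\{y,z\}_c,x)+\omega(\{z,x\}_c,y)$ vanishes.

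This is the main computation. The plan is to expand each $\{\cdot,\cdot\}_c$ as a sum of two products, giving six terms of the form $\omega(a\cdot b,c)$. Then apply \eqref{quadratic-mock-pre-2}, namely $\omega(a\cdot b,c)=\omega(c\cdot b,a)$, together with skew-symmetry of $\omega$, so that the terms cancel in pairs:
\begin{itemize}
\item $\omega(x\cdot y,z)$ equals $\omega(z\cdot y,x)$, which cancels against the term $\omega(z\cdot y,x)$ coming from $\omega(\{y,z\},x)$ only after a sign analysis.
\end{itemize}
If the pairing does not come out cleanly, we instead apply \eqref{quadratic-mock-pre} to every term. This produces $\omega(y,\{x,z\}_c)+\omega(x,\{y,z\}_c)$ for the first bracket, and similarly for the others. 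The resulting identity has the form $S=-2S$, where $S$ denotes the cyclic sum. Since the characteristic is $0$, this forces $S=0$. Concretely, $\omega(\{x,y\}_c,z)=\omega(y,\{x,z\}_c)+\omega(x,\{y,z\}_c)=-\omega(\{z,x\}_c,y)-\omega(\{y,z\}_c,x)$, which is exactly $S=0$.

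Finally, the two constructions are mutually inverse. Going from quadratic to symplectic and back, the invariance condition \eqref{quadratic-mock-pre} shows that the original product satisfies \eqref{omega-Mpre}. By nondegeneracy of $\omega$, the product determined by \eqref{omega-Mpre} is unique, so we recover the original product. Going from symplectic to quadratic and back, we recover the original bracket by the compatibility shown in the first step.
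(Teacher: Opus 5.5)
Your proof is correct and follows the paper's route. The symplectic identity on $\g_c$ comes from expanding $\{\cdot,\cdot\}_c$ and applying \eqref{quadratic-mock-pre}, and your ``concretely'' line already gives $S=0$ directly, so the $S=-2S$ detour is unnecessary. The converse is Proposition \ref{sym-mock-pre}, and your checks of compatibility and of mutual inverseness supply details the paper leaves implicit.

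Drop the tentative bullet that uses \eqref{quadratic-mock-pre-2} alone. It identifies $\omega(x\cdot y,z)$ with $\omega(z\cdot y,x)$, and both occur in $S$ with a plus sign, so the six terms double up in pairs rather than cancel.
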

\begin{proof}
Let  $(\g,\cdot_{\g},\omega)$ be a quadratic mock-pre-Lie algebra. By \eqref{commutator} and
\eqref{quadratic-mock-pre}, we have
\begin{eqnarray}
\omega(\{x,y\}_{c}, z) + \omega(\{y,z\}_{c}, x) + \omega(\{z,x\}_{c}, y) = 0, \quad \forall x, y, z \in \g.
\end{eqnarray}
Thus, $(\g,\{\cdot,\cdot\}_{c},\omega)$ is a symplectic mock-Lie algebra.

Conversely, let $(\g,\{\cdot,\cdot\}_{\g},\omega)$ be a symplectic mock-Lie algebra. By Proposition \ref{sym-mock-pre}, there is a compatible quadratic mock-pre-Lie algebra structure $(\g,\cdot_{\g},\omega)$ given by
\begin{eqnarray*}
\omega(x \cdot_{\g} y, z) = \omega(y, \{x,z\}_{\g}).
\end{eqnarray*}
The proof is finished.
\end{proof}

Let $V$ be a vector space and $V^*$ its dual space. Then there is a natural nondegenerate skew-symmetric bilinear form $\omega$ on $V\oplus V^*$ given by:
\begin{eqnarray}\label{omega}
\omega(x+\xi,y+\eta)=\langle\xi,y\rangle-\langle\eta,x\rangle,\quad \forall x,y\in \g, \xi,\eta\in \g^*.
\end{eqnarray}

Next we introduce the notion of a phase space of a mock-Lie algebra.
\begin{defi}
Let $(\g,\{\cdot,\cdot\}_{\g})$ be a mock-Lie algebra and let $\g^*$ be  dual space of $\g$.
If there is a mock-Lie algebra structure on the direct sum  $\g\oplus \g^*$ such that $(\g\oplus \g^*,\{\cdot,\cdot\},\omega)$ is a symplectic mock-Lie algebra, where $\omega$ is given by \eqref{omega}, and $(\g,\{\cdot,\cdot\}_{\g})$ and $(\g^*,\{\cdot,\cdot\}|_{\g^*})$ are mock-Lie subalgebras of $(\g\oplus \g^*,\{\cdot,\cdot\})$, then the symplectic mock-Lie algebra $(\g\oplus \g^*,\{\cdot,\cdot\},\omega)$ is called a {\bf phase space of the mock-Lie algebra $(\g,\{\cdot,\cdot\}_{\g})$}.
\end{defi}

Mock-pre-Lie algebras play an important role in the study of phase spaces of mock-Lie algebras.

\begin{thm}\label{phasespace-subadj}
A mock-Lie algebra has a phase space if and only if it is sub-adjacent to a mock-pre-Lie algebra.
\end{thm}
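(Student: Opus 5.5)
For the forward direction, suppose $(\g\oplus\g^*,\{\cdot,\cdot\},\omega)$ is a phase space of $(\g,\{\cdot,\cdot\}_{\g})$. By Proposition \ref{sym-mock-pre}, the symplectic form $\omega$ induces a compatible mock-pre-Lie structure $\cdot$ on $\g\oplus\g^*$ through $\omega(X\cdot Y,Z)=\omega(Y,\{X,Z\})$. Its sub-adjacent mock-Lie algebra is $\{\cdot,\cdot\}$ itself. The key point is to show that $\g$ is closed under $\cdot$. For $x,y,z\in\g$ we have $\omega(x\cdot y,z)=\omega(y,\{x,z\}_{\g})=0$, since $\g$ is a subalgebra and $\omega|_{\g\times\g}=0$ by \eqref{omega}. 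Because $\g$ is Lagrangian, meaning $\g^{\perp_\omega}=\g$, it follows that $x\cdot y\in\g$. So $(\g,\cdot|_{\g})$ is a mock-pre-Lie subalgebra, and its sub-adjacent bracket is $x\cdot y+y\cdot x=\{x,y\}=\{x,y\}_{\g}$. Hence $\g$ is sub-adjacent to a mock-pre-Lie algebra.

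For the converse, let $(\g,\cdot_{\g})$ be a mock-pre-Lie algebra with $\g_c=(\g,\{\cdot,\cdot\}_{\g})$. The natural candidate is the semi-direct product mock-Lie algebra $\g_c\ltimes_{L^*}\g^*$, with $\g^*$ carrying the zero bracket. Here we need a representation of $\g_c$ on $\g^*$. Since $L$ is a representation of $\g_c$ by Proposition \ref{pre-jj to jj}, Lemma \ref{dual-rep-mockLie} says $-L^*$ is a representation. Alternatively, one could take $-L^*-R^*$ from the coregular representation, which is also a representation of $\g_c$. I would determine which choice works by checking the symplectic condition \eqref{symplectic-mock-Lie}.

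Concretely, the bracket would be $\{x+\xi,y+\eta\}=\{x,y\}_{\g}+\rho(x)\eta+\rho(y)\xi$, with $\rho=\pm L^*$, and $\omega$ as in \eqref{omega}. The cyclic sum in \eqref{symplectic-mock-Lie} is trilinear, so by symmetry it suffices to check two cases: three arguments in $\g$, which vanishes trivially, and two arguments in $\g$ with one in $\g^*$, say $x,y,\xi$. Cases with two or more entries in $\g^*$ vanish, since brackets of dual elements are zero and $\omega$ pairs $\g^*$ trivially with itself. In the remaining case, the terms reduce to $\langle\xi,\{x,y\}_{\g}\rangle$ together with terms of the form $\langle\rho(x)\xi,y\rangle$ and $\langle\rho(y)\xi,x\rangle$. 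With $\rho=-L^*$ these give $\langle\xi,x\cdot y\rangle$ and $\langle\xi,y\cdot x\rangle$, and the signs should combine to $\pm(\langle\xi,\{x,y\}_{\g}\rangle-\langle\xi,x\cdot y+y\cdot x\rangle)=0$. Subalgebra closure and nondegeneracy of $\omega$ are then immediate.

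The main obstacle is sign bookkeeping. I need to choose $\rho=L^*$ versus $-L^*$ consistently, given that $\omega$ is skew-symmetric and the orientation convention in \eqref{omega}. The existence of a suitable sign is guaranteed, because $-L^*$ is a representation and the sign of $\rho$ only flips the cross terms. I would fix the sign by computing the single mixed case above.
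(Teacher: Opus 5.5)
Your proposal is correct and follows the paper's proof. In one direction you use the compatible mock-pre-Lie structure from Proposition \ref{sym-mock-pre} together with the Lagrangian argument $\omega(x\cdot y,z)=0\Rightarrow x\cdot y\in\g$; in the other you take the semidirect product $\g_c\ltimes_{-L^*}\g^*$ and check only the mixed case. The one thing to tidy is the sign hedging: $\rho=-L^*$ is forced, as your own computation confirms, since $+L^*$ is not a representation in general and $-L^*-R^*$ leaves a nonzero $\langle\xi,\{x,y\}_{\g}\rangle$. Commit to $-L^*$ outright instead of appealing to an a priori guarantee that some sign works.
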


\begin{proof}
Let $(\g,\cdot)$ be a mock-pre-Lie algebra. By Proposition \ref{pre-jj to jj}, the left multiplication $L$ is a representation of the mock-Lie algebra $\g_c$ on $\g$. By Lemma \ref{dual-rep-mockLie}, $-L^*$ is a representation of the sub-adjacent mock-Lie algebra $\g_c$ on $\g^*$. Thus, we have the semidirect product mock-Lie algebra $\g_c\ltimes_{-L^*} \g^*=(\g_c\oplus \g^*,\{\cdot,\cdot\}_{\ltimes})$. By \eqref{omega}, for all $x,y,z\in \g$ and $\xi,\eta,\zeta\in \g^*,$ we have
\begin{eqnarray*}
&&\omega(\{x+\xi,y+\eta\}_{\ltimes},z+\zeta)+\omega(\{y+\eta,z+\zeta\}_{\ltimes},x+\xi)+\omega(\{z+\zeta,x+\xi\}_{\ltimes},y+\eta)\\
&=&\omega(\{x,y\}-L^*_{x}\eta-L^*_{y}\xi,z+\zeta)+\omega(\{y,z\}-L^*_{y}\zeta-L^*_{z}\eta,x+\xi)+\omega(\{z,x\}-L^*_{z}\xi-L^*_{x}\zeta,y+\eta)\\
&=&\langle-L^*_{x}\eta-L^*_{y}\xi,z\rangle-\langle\zeta,\{x,y\}_{c}\rangle
+\langle-L^*_{y}\zeta-L^*_{z}\eta,x\rangle-\langle\xi,\{y,z\}_{c}\rangle
+\langle-L^*_{z}\xi-L^*_{x}\zeta,y\rangle-\langle\eta,\{z,x\}_{c}\rangle\\
&=&\langle\eta,L_{x}z\rangle+\langle\xi,L_{y}z\rangle-\langle\zeta,\{x,y\}_{c}\rangle
+\langle\zeta,L_{y}x\rangle+\langle\eta,L_{z}x\rangle\\
&&-\langle\xi,\{y,z\}_{c}\rangle
+\langle\xi,L_{x}y\rangle+\langle\zeta,L_{x}y\rangle-\langle\eta,\{z,x\}_{c}\rangle\\
&=&0,
\end{eqnarray*}
which implies that $(\g_c\ltimes_{-L^*} \g^*,\omega)$ is a symplectic mock-Lie algebra. Moreover, $(\g_c,\{\cdot,\cdot\}_{c})$ is a subalgebra of $\g_c\ltimes_{-L^*} \g^*$ and $\g^*$ is an abelian subalgebra of $\g_c\ltimes_{-L^*} \g^*$. Thus, the symplectic mock-Lie algebra $(\g_c\ltimes_{-L^*} \g^*,\omega)$ is a phase space of the sub-adjacent mock-Lie algebra $(\g_c,\{\cdot,\cdot\}_{c})$.

Conversely, let $(\g\oplus \g^*,\{\cdot,\cdot\},\omega)$  be a phase space of a mock-Lie algebra $(\g,\{\cdot,\cdot\}_{\g})$. By Proposition \ref{sym-mock-pre}, there exists a compatible mock-pre-Lie algebra structure $\cdot$ on $\g\oplus \g^*$ given by
\begin{eqnarray*}
  \omega((x+\xi)\cdot (y+\eta),z+\zeta)= \omega(y+\eta,\{x+\xi,z+\zeta\}).
\end{eqnarray*}
 Since $(\g,\{
\cdot,\cdot\}_{\g})$ is a subalgebra of $(\g\oplus \g^*,\{\cdot,\cdot\})$, we have
\begin{eqnarray*}
\omega(x \cdot y, z) = \omega(y, \{x,z\})=\omega(y, \{x,z\}_{\g})=\langle y,0\rangle-\langle 0,\{x,z\}_{\g}\rangle=0,\quad \forall x,y,z\in \g.
\end{eqnarray*}
Thus, $x\cdot y\in \g,$ which implies that $(\g,\cdot|_\g)$ is a subalgebra of the mock-pre-Lie algebra $(\g\oplus \g^*,\cdot)$. Its sub-adjacent mock-Lie algebra $(\g_c,\{\cdot,\cdot\}_c)$ is exactly the original mock-Lie algebra $(\g,\{\cdot,\cdot\}_{\g})$.
\end{proof}

\begin{cor}\label{phase-space-submock-pre}
Let $(\g\oplus \g^*,\{\cdot,\cdot\},\omega)$  be a phase space of a mock-Lie algebra and $(\g\oplus \g^*,\cdot)$ the associated mock-pre-Lie algebra. Then both $(\g,\cdot|_{\g})$ and $(\g^*,\cdot|_{\g^*})$ are subalgebras of the mock-pre-Lie algebra $(\g\oplus \g^*,\cdot)$.
\end{cor}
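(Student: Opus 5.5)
The claim for $\g$ is already contained in the converse half of the proof of Theorem \ref{phasespace-subadj}. There we showed that for $x,y,z\in\g$,
\[
\omega(x\cdot y,z)=\omega(y,\{x,z\})=0,
\]
because $\{x,z\}\in\g$ and $\g$ is isotropic for $\omega$ by \eqref{omega}. I would strengthen this slightly. Since $\g$ is isotropic of dimension $\dim\g$ in the $2\dim\g$-dimensional space with nondegenerate $\omega$, it is Lagrangian, so $\g^{\perp_\omega}=\g$. The vanishing of $\omega(x\cdot y,z)$ for all $z\in\g$ then gives $x\cdot y\in\g^{\perp_\omega}=\g$. The same conclusion can be read off directly from \eqref{omega}: writing $x\cdot y=a+\alpha$ with $a\in\g$ and $\alpha\in\g^*$, we get $\omega(a+\alpha,z)=\langle\alpha,z\rangle$ for every $z\in\g$, hence $\alpha=0$. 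This is the step I would make explicit.

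For $\g^*$ I would run the same argument with the roles of $\g$ and $\g^*$ exchanged. Take $\xi,\eta,\zeta\in\g^*$ and use the defining relation of the induced mock-pre-Lie structure from Proposition \ref{sym-mock-pre}:
\[
\omega(\xi\cdot\eta,\zeta)=\omega(\eta,\{\xi,\zeta\}).
\]
By the definition of a phase space, $\g^*$ is a mock-Lie subalgebra of $(\g\oplus\g^*,\{\cdot,\cdot\})$, so $\{\xi,\zeta\}\in\g^*$. Now \eqref{omega} shows that $\g^*$ is also isotropic, since $\omega(\xi,\eta)=\langle\xi,0\rangle-\langle\eta,0\rangle=0$. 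Hence the right-hand side vanishes.

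To conclude, write $\xi\cdot\eta=a+\alpha$ with $a\in\g$ and $\alpha\in\g^*$. Then $0=\omega(a+\alpha,\zeta)=-\langle\zeta,a\rangle$ for all $\zeta\in\g^*$, which forces $a=0$. Therefore $\xi\cdot\eta\in\g^*$, and $(\g^*,\cdot|_{\g^*})$ is a subalgebra. Nothing here is a real obstacle; the only point needing care is the sign convention in \eqref{omega} when extracting the component, and it does not affect the conclusion.
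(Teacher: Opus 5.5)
Your proposal is correct and follows the paper's own reasoning: the paper derives this corollary from the converse half of the proof of Theorem~\ref{phasespace-subadj} (using $\omega(x\cdot y,z)=\omega(y,\{x,z\})=0$ together with isotropy), and the $\g^*$ case is the symmetric argument you write out. You also make explicit, via \eqref{omega}, the step from $\omega(x\cdot y,\g)=0$ to $x\cdot y\in\g$, which the paper leaves implicit.
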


\begin{cor}
Let $(V;\rho)$ be a representation of the mock-Lie algebra $(\g,\{\cdot,\cdot\})$ and  $(V^*;-\rho^*)$
the dual representation. If $(\g\oplus \g^*,\{\cdot,\cdot\}_{\ltimes},\omega)$ is a phase space of $(\g,\{\cdot,\cdot\}),$ where $\{\cdot,\cdot\}_{\ltimes}$ is given by \eqref{semi-mock-lie}, then
\begin{eqnarray*}
x\cdot y:= \rho(x)y,\quad \forall x,y\in \g,
\end{eqnarray*}
defines a mock-pre-Lie algebra structure on $\g.$
\end{cor}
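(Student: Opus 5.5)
The plan is to reduce the statement to the converse part of Theorem \ref{phasespace-subadj} together with the explicit form of the symplectic structure. Since $(\g\oplus\g^*,\{\cdot,\cdot\}_{\ltimes},\omega)$ is a phase space, Proposition \ref{sym-mock-pre} gives a compatible mock-pre-Lie structure $\cdot$ on $\g\oplus\g^*$, defined by
\begin{eqnarray*}
\omega((x+\xi)\cdot(y+\eta),z+\zeta)=\omega(y+\eta,\{x+\xi,z+\zeta\}_{\ltimes}).
\end{eqnarray*}
By the proof of Theorem \ref{phasespace-subadj} (see also Corollary \ref{phase-space-submock-pre}), $\g$ is closed under $\cdot$. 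It is therefore a mock-pre-Lie subalgebra, with sub-adjacent mock-Lie algebra equal to $(\g,\{\cdot,\cdot\})$. It remains only to identify $x\cdot y$ with $\rho(x)y$. Implicitly $V=\g$ here, since the semi-direct product $\g\ltimes_{-\rho^*}\g^*$ must live on $\g\oplus\g^*$ with $\rho:\g\to\gl(\g)$.

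To identify the product, I will pair $x\cdot y\in\g$ against an arbitrary $\zeta\in\g^*$. Using \eqref{omega}, $\omega(x\cdot y,\zeta)=-\langle\zeta,x\cdot y\rangle$. On the other side, $\{x,\zeta\}_{\ltimes}=-\rho^*(x)\zeta\in\g^*$, so
\begin{eqnarray*}
\omega(y,-\rho^*(x)\zeta)=\langle\rho^*(x)\zeta,y\rangle=-\langle\zeta,\rho(x)y\rangle.
\end{eqnarray*}
Comparing the two sides gives $\langle\zeta,x\cdot y\rangle=\langle\zeta,\rho(x)y\rangle$ for all $\zeta$. Hence $x\cdot y=\rho(x)y$, and the restricted product is exactly the one in the statement, so it is a mock-pre-Lie structure.

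The only delicate point is the sign bookkeeping: the convention in \eqref{omega} combined with the dual representation being $-\rho^*$ must produce $+\rho(x)y$. The rest follows from results already proved.
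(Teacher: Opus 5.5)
Your proposal is correct and is essentially the paper's proof: both pair $x\cdot y$ against an element of $\g^*$, use the invariance $\omega(x\cdot y,\xi)=\omega(y,\{x,\xi\}_{\ltimes})$ with $\{x,\xi\}_{\ltimes}=-\rho^*(x)\xi$, and track the signs from \eqref{omega} to obtain $x\cdot y=\rho(x)y$. You also cite Theorem \ref{phasespace-subadj} for $x\cdot y\in\g$. This step is needed because pairing with $\g^*$ only detects the $\g$-component, and it makes explicit something the paper leaves implicit.
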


\begin{proof}
For all $x,y,z\in \g$ and $\xi\in \g^*$, by \eqref{quadratic-mock-pre} and \eqref{omega}, we have
\begin{eqnarray*}
\langle\xi,x\cdot y\rangle=\omega(\xi,x\cdot y)=-\omega(x\cdot y,\xi)=-\omega(y,\{x,\xi\}_{\ltimes})=\omega(y,\rho^{*}(x)\xi)=-\langle \rho^{*}(x)\xi,y\rangle=\langle \xi,\rho(x)y\rangle.
\end{eqnarray*}
Therefore, we have $x\cdot y=\rho(x)y.$
\end{proof}

At the end of this section, we introduce the notion of a Manin triple of mock-pre-Lie algebras.

\begin{defi}
A {\bf Manin triple of mock-pre-Lie algebras} is a triple $(\huaG,\g,\g')$, where
\begin{itemize}
\item[{\rm(i)}] $(\huaG,\cdot,\omega)$ is a quadratic mock-pre-Lie algebra;

\item[{\rm(ii)}] both $\g$ and $\g'$ are isotropic subalgebras of $(\huaG,\cdot)$, that is, $\omega(x,y)=\omega(u,v)=0$ for $x,y\in \g$ and $u,v\in \g'$;

\item[{\rm(iii)}]$\huaG=\g\oplus \g'$ as vector spaces.
\end{itemize}
\end{defi}

In a Manin triple of mock-pre-Lie algebras $(\huaG,\g,\g')$, since the skew-symmetric bilinear form $\omega$ is nondegenerate, $\g'$ can be identified with $\g^*$ via
$$\langle \xi,x \rangle:=\omega(\xi,x),\quad \forall x\in \g,\xi\in \g'.$$
Thus, $\huaG$ is isomorphic to $\g\oplus \g^*$ naturally and the bilinear form $\omega$ is exactly given by
\eqref{omega}.

\begin{thm}\label{phase-space-Manin-triple}
There is a one-to-one correspondence between Manin triples of mock-pre-Lie algebras and phase spaces of mock-Lie algebras. More precisely, if $(\g\oplus \g^*,\g,\g^*)$ is a Manin triple of mock-pre-Lie algebras, then $(\g\oplus \g^*,\{\cdot,\cdot\},\omega)$ is a phase space of the mock-Lie algebra $(\g,\{\cdot,\cdot\}_{\g}),$ where $\omega$ is given by \eqref{omega}. Conversely, if $(\g\oplus \g^*,\{\cdot,\cdot\}_{\g\oplus \g^*},\omega)$ is a phase space of a mock-Lie algebra $(\g,\{\cdot,\cdot\}_{\g}),$ then $(\g\oplus \g^*,\g,\g^*)$ is a Manin-triple of mock-pre-Lie algebras, where the mock-pre-Lie algebra structure on $\g\oplus \g^*$ is given by \eqref{quadratic-mock-pre}.
\end{thm}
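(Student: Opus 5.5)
Both directions rest on Proposition \ref{symplectic-quadratic}, the correspondence between quadratic mock-pre-Lie algebras and symplectic mock-Lie algebras, together with the subalgebra arguments from Theorem \ref{phasespace-subadj} and Corollary \ref{phase-space-submock-pre}.

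\emph{From a Manin triple to a phase space.} Let $(\g\oplus\g^*,\g,\g^*)$ be a Manin triple of mock-pre-Lie algebras, with mock-pre-Lie product $\cdot$ on $\g\oplus\g^*$ and $\omega$ given by \eqref{omega}.

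1. Take the sub-adjacent mock-Lie algebra $\{a,b\}=a\cdot b+b\cdot a$ from Proposition \ref{pre-jj to jj}.
2. By the first half of the proof of Proposition \ref{symplectic-quadratic}, the invariance \eqref{quadratic-mock-pre} forces the cyclic identity \eqref{symplectic-mock-Lie}. Hence $(\g\oplus\g^*,\{\cdot,\cdot\},\omega)$ is a symplectic mock-Lie algebra.
3. Since $\g$ and $\g^*$ are closed under $\cdot$, they are closed under the symmetrized bracket $\{\cdot,\cdot\}$, so both are mock-Lie subalgebras.
4. On $\g$ the restricted bracket is $\{\cdot,\cdot\}_{\g}=\{\cdot,\cdot\}_c$ of $(\g,\cdot|_\g)$. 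This is the mock-Lie algebra whose phase space we obtain.

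\emph{From a phase space to a Manin triple.} Let $(\g\oplus\g^*,\{\cdot,\cdot\},\omega)$ be a phase space.

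1. Define $\cdot$ on $\g\oplus\g^*$ by \eqref{omega-Mpre}. By Proposition \ref{sym-mock-pre} this is a compatible mock-pre-Lie structure, so its sub-adjacent bracket is $\{\cdot,\cdot\}$. Condition \eqref{omega-Mpre} then reads exactly as \eqref{quadratic-mock-pre}, so $(\g\oplus\g^*,\cdot,\omega)$ is quadratic.
2. Both $\g$ and $\g^*$ are isotropic for \eqref{omega}, since $\omega(x,y)=\langle 0,y\rangle-\langle 0,x\rangle=0$, and similarly on $\g^*$.
3. Both are $\cdot$-subalgebras by Corollary \ref{phase-space-submock-pre}. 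Recall the argument: for $x,y,z\in\g$, $\omega(x\cdot y,z)=\omega(y,\{x,z\})=0$ because $\{x,z\}\in\g$ is isotropic against $y$. Since $\g$ is Lagrangian (isotropic of half dimension), $\omega(x\cdot y,\g)=0$ forces $x\cdot y\in\g$. The same argument works for $\g^*$.
4. Hence $(\g\oplus\g^*,\g,\g^*)$ is a Manin triple.

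\emph{Bijectivity.} It remains to show the two constructions are mutually inverse.

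1. Starting from a phase space, building $\cdot$ and symmetrizing returns $\{\cdot,\cdot\}$, by compatibility in Proposition \ref{sym-mock-pre}.
2. Starting from a Manin triple with product $\cdot$, pass to the bracket $\{\cdot,\cdot\}$ and reconstruct $\cdot'$ via \eqref{omega-Mpre}. Then
\[
\omega(a\cdot' b,c)=\omega(b,\{a,c\})=\omega(a\cdot b,c)
\]
by the invariance \eqref{quadratic-mock-pre} of the original $\cdot$. Nondegeneracy of $\omega$ gives $\cdot'=\cdot$.

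The main point needing care is this last step, namely that the quadratic product is uniquely determined by $\omega$ and the bracket. Nondegeneracy handles it immediately. The other delicate point is the Lagrangian argument showing that $\g$ and $\g^*$ are $\cdot$-closed, which is handled as in Theorem \ref{phasespace-subadj}.
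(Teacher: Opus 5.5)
Your proposal is correct and follows essentially the same route as the paper. The forward direction goes through Proposition~\ref{symplectic-quadratic}, where you get closure of $\g,\g^*$ under the bracket directly from closure under $\cdot$ while the paper writes out the bracket \eqref{matched-pair-MockLie}; the converse goes through Proposition~\ref{sym-mock-pre} and Corollary~\ref{phase-space-submock-pre}. Your explicit check that the two constructions are mutually inverse, using compatibility and nondegeneracy of $\omega$, is a welcome addition that the paper leaves implicit.
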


\begin{proof}
Let  $(\g\oplus \g^*,\g,\g^*)$ be a Manin triple of mock-pre-Lie algebras. Here $(\g\oplus \g^*,\cdot,\omega)$ is a quadratic mock-pre-Lie algebra and $\omega$ is given by \eqref{omega}.
Denote by $\cdot_{\g}$ and $\cdot_{\g^{*}}$ be the mock-pre-Lie algebra structures on $\g$ and $\g^*$, respectively, and denote by $\{\cdot,\cdot\}_{\g}$ and $\{\cdot,\cdot\}_{\g^*}$ the corresponding sub-adjacent mock-Lie algebra structures on $\g$ and $\g^*$, respectively. It is straightforward to deduce that the corresponding mock-Lie algebra structure $\{\cdot,\cdot\}$ on $\g\oplus \g^*$ is given by
\begin{eqnarray}\label{matched-pair-MockLie}
\{x+\xi,y+\eta\}=\{x,y\}_{\g}-\huaL^*_{\xi}y-\huaL^*_{\eta}x+\{\xi,\eta\}_{\g^*}-L^*_{x}\eta-L^*_{y}\xi, \quad \forall x,y\in \g,\xi,\eta\in \g^*,
\end{eqnarray}
it is obvious that $(\g,\{\cdot,\cdot\}_{\g})$ and $(\g^*,\{\cdot,\cdot\}_{\g^*})$ are subalgebras of $(\g\oplus \g^*,\{\cdot,\cdot\})$.
By Proposition \ref{symplectic-quadratic}, $(\g\oplus \g^*,\{\cdot,\cdot\},\omega)$ is a symplectic mock-Lie algebra, which is a phase space of the mock-Lie algebra $(\g,\{\cdot,\cdot\}_{\g}).$
\emptycomment{
For all $x,y,z\in \g, \xi,\eta,\zeta \in \g^*,$ we have
\begin{eqnarray*}
&&\omega(\{x+\xi,y+\eta\}, z+\zeta) + \omega(\{y+\eta,z+\zeta\}, x+\xi) + \omega(\{z+\zeta,x+\xi\},y+\eta)\\
&=&\langle\{\xi,\eta\}_{\g^*}-L^*_{x}\eta-L^*_{y}\xi,z \rangle-\langle\zeta,\{x,y\}_{\g}-\huaL^*_{\xi}y-\huaL^*_{\eta}x\rangle\\
&&+\langle\{\eta,\zeta\}_{\g^*}-L^*_{y}\zeta-L^*_{z}\eta,x \rangle-\langle\xi,\{y,z\}_{\g}-\huaL^*_{\eta}z-\huaL^*_{\zeta}y\rangle\\
&&+\langle\{\zeta,\xi\}_{\g^*}-L^*_{z}\xi-L^*_{x}\zeta,y \rangle-\langle\eta,\{z,x\}_{\g}-\huaL^*_{\xi}z-\huaL^*_{\zeta}x\rangle\\
&=&\langle\{\xi,\eta\}_{\g^*},z\rangle+\langle\eta, x\cdot_{\g}z\rangle+\langle\xi,y\cdot_{\g}z\rangle
-\langle\zeta,\{x,y\}_{\g}\rangle-\langle\xi\cdot_{\g^*}\zeta,y\rangle-\langle\eta\cdot_{\g^*}\zeta,x\rangle\\
&&+\langle\{\eta,\xi\}_{\g^*},x\rangle+\langle\zeta, y\cdot_{\g}x\rangle+\langle\eta,z\cdot_{\g}x\rangle
-\langle\xi,\{y,z\}_{\g}\rangle-\langle\eta\cdot_{\g^*}\xi,z\rangle-\langle\zeta\cdot_{\g^*}\xi,y\rangle\\
&&+\langle\{\zeta,\xi\}_{\g^*},y\rangle+\langle\xi, z\cdot_{\g}x\rangle+\langle\zeta,x\cdot_{\g}y\rangle
-\langle\eta,\{z,x\}_{\g}\rangle-\langle\xi\cdot_{\g^*}\eta,z\rangle-\langle\zeta\cdot_{\g^*}\eta,x\rangle\\
&=&0.
\end{eqnarray*}
Thus, we deduce that $\omega$ is a symplectic structure on the mock-Lie algebra $(\g\oplus \g^*, \{\cdot,\cdot\})$. Therefore, it is a phase space.}

Conversely, let $(\g\oplus \g^*,\{\cdot,\cdot\}_{\g\oplus \g^*},\omega)$ be a phase space of a mock-Lie algebra $(\g,\{\cdot,\cdot\}_\g)$. By Proposition \ref{sym-mock-pre}, there exists a mock-pre-Lie algebra structure $\cdot$ on $\g\oplus \g^*$ given by \eqref{omega-Mpre} such that $(\g\oplus \g^*,\cdot,\omega)$ is a quadratic mock-pre-Lie algebra. By Corollary \ref{phase-space-submock-pre},
$(\g,\cdot|_{\g})$ and $(\g^*,\cdot|_{\g^*})$ are subalgebras of the mock-pre-Lie algebra $(\g\oplus \g^*,\cdot)$. It is obvious that both $\g$ and $\g^*$ are isotropic. Therefore, $(\g\oplus \g^*,\g,\g^*)$ is a Manin triple of mock-pre-Lie algebras.

\end{proof}

\section{Mock-pre-Lie bialgebras and the mock-pre-Lie classical Yang-Baxter equation}\label{sec:II}
In this section, first we introduce the notion of mock-pre-Lie bialgebras. Then we establish the equivalence between mock-pre-Lie bialgebras, Manin triples of mock-pre-Lie algebras, phase spaces of the sub-adjacent Lie algebras, certain matched pairs of mock-pre-Lie algebras and certain matched pairs of mock-Lie algebras. Furthermore, we introduce the notion of coboundary mock-pre-Lie bialgebras, which yields the mock-pre-Lie classical Yang-Baxter equation, as an analogue of the classical Yang-Baxter equation.

\subsection{Mock-pre-Lie bialgebras}
For a mock-pre-Lie algebra $(\g^*,\cdot_{\g^*}),$ let $\alpha:\g\rightarrow\otimes^2{\g}$ be the dual map of
$\cdot_{\g^*}:\otimes^2\g^*\rightarrow\g^*,$ i.e.
\begin{eqnarray*}
  \langle \alpha(x),\xi\otimes\eta\rangle=\langle x,\xi\cdot_{\g^*}\eta\rangle
\end{eqnarray*}

\begin{defi}
Let $\g$ be a vector space. A {\bf mock-pre-Lie bialgebra} structure on $\g$ is a pair of linear maps $(\alpha,\beta)$ where $\alpha:\g\rightarrow\g\otimes \g,$
$\beta:\g^*\rightarrow\g^*\otimes\g^*,$ such that
\begin{itemize}
\item[{\rm(i)}] $\alpha^*:\g^*\otimes\g^*\rightarrow\g^*$ is a mock-pre-Lie algebra structure on $\g^*;$
\item[{\rm(ii)}] $\beta^*:\g\otimes\g\rightarrow\g$ is a mock-pre-Lie algebra structure on $\g;$
\item[{\rm(iii)}] For all $x,y\in \g,\xi,\eta \in \g^*,$ $\alpha$ and $\beta$ satisfied the following compatibility conditions:
\begin{align}
\label{alpha-condition}\alpha\{x,y\}_{\g}&=-({\Id}\otimes (R_{y}+L_{y})+L_{y}\otimes {\Id})\alpha(x)-({\Id}\otimes (L_{x}+R_{x})+L_{x}\otimes {\Id})\alpha(y),\\
\label{beta-condition} \beta\{\xi,\eta\}_{\g^*}&=-({\Id}\otimes (\huaR_{\eta}+\huaL_{\eta})+\huaL_{\eta}\otimes {\Id})\beta(\xi)-({\Id}\otimes (\huaL_{\xi}+\huaR_{\xi})+\huaL_{\xi}\otimes {\Id})\beta(\eta).
\end{align}
\end{itemize}
We denote this {\bf mock-pre-Lie bialgebra} by $(\g,\g^*,\alpha,\beta).$ 
\end{defi}

Next we recall the notion of a matched pair of mock-Lie algebras and a matched pair of mock-pre-Lie algebras.

\begin{defi}{\rm(\cite{Na-Attan})}
Let $(\g, \cdot_{\g})$ and $(\h, \cdot_{\h})$ be two mock-pre-Lie algebras. Let $(\g,\{\cdot,\cdot\}_{\g})$ and $(\h,\{\cdot,\cdot\}_{\h})$ be the sub-adjacent mock-Lie algebras of $(\g, \cdot_{\g})$ and $(\h, \cdot_{\h})$ respectively. If there exists a representation $(\rho_{\g},\mu_{\g})$ of $\g$ on $\h$ and a representation $(\rho_{\h},\mu_{\h})$ of
$\h$ on $\g$ such that the following compatibility conditions are satisfied:
\begin{align}
\label{mockpair1}0=&\mu_{\h}(u)(\{x,y\}_{\g})+\mu_{\h}(\rho_{\g}(y)u)x + \mu_{\h}(\rho_{\g}(x)u)y + x \cdot_{\g} (\mu_{\h}(u)y) + y \cdot_{\g} (\mu_{\h}(u)x),\\ \label{mockpair2}0=&\mu_{\g}(x)(\{u,v\}_{\h})+\mu_{\g}(\rho_{\h}(v)x)u + \mu_{\g}(\rho_{\h}(u)x)v + u \cdot_{\h} (\mu_{\g}(x)v) + v \cdot_{\h} (\mu_{\g}(x)u),\\
\label{mockpair3}-\rho_{\h}(u)(x \cdot_{\g} y)=&\rho_{\h}(\rho_{\g}(x)u + \mu_{\g}(x)u)y + (\rho_{\h}(u)x + \mu_{\h}(u)x) \cdot_{\g} y + \mu_{\h}(\mu_{\g}(y)u)x + x \cdot_{\g} (\rho_{\h}(u)y),\\
\label{mockpair4}-\rho_{\g}(x)(u \cdot_{\h} v)=&\rho_{\g}(\rho_{\h}(u)x + \mu_{\h}(u)x)v + (\rho_{\g}(x)u + \mu_{\g}(x)u) \cdot_{\h} v + \mu_{\g}(\mu_{\h}(v)x)u + u \cdot_{\h} (\rho_{\g}(x)v),
\end{align}
for all $x, y \in \g, u,v \in \h,$ then we call $(\g, \h; (\rho_{\g}, \mu_{\g}), (\rho_{\h}, \mu_{\h}))$ a {\bf matched pair of mock-pre-Lie algebras}.
\end{defi}
In fact, we have the following alternative description of matched pairs of mock-pre-Lie algebras.
\begin{pro}\label{pro-matched-pair-mock-pre}
Let $(\g, \h; (\rho_{\g}, \mu_{\g}), (\rho_{\h}, \mu_{\h}))$ be a matched pair of mock-pre-Lie algebras. Then there is a mock-pre-Lie algebra structure $\cdot_{\bowtie}:\otimes^2(\g \oplus \h)\rightarrow \g \oplus \h$ defined by
\begin{equation}\label{matched-pair-mock-pre}
(x+u)\cdot_{\bowtie}(y+v)=x \cdot_{\g} y + \rho_{\h}(u)y+ \mu_{\h}(v)x  +u\cdot_{\h}v + \rho_{\g}(x)v + \mu_{\g}(y)u, \quad \forall x, y \in \g, u, v \in \h.
\end{equation}

Conversely, if $(\g\oplus\h, \cdot_{\bowtie})$ is a mock-pre-Lie algebra such that $\g$ and $\h$ are mock-pre-Lie subalgebras, then $(\g, \h; (\rho_{\g}, \mu_{\g}), (\rho_{\h}, \mu_{\h}))$ is a matched pair of mock-pre-Lie algebras, where the representation $(\rho_{\g}, \mu_{\g})$ of $\g$ on $\h$ and the representation $(\rho_{\h}, \mu_{\h})$ of $\h$ on $\g$ are determined by
\begin{eqnarray}\label{mock-pre-Lie-rho-mu}
x\cdot_{\bowtie}u=\rho_{\g}(x)u+\mu_{\h}(u)x,\quad u\cdot_{\bowtie}x=\mu_{\g}(x)u+\rho_{\h}(u)x,\quad \forall x\in \g,u\in \h.
\end{eqnarray}
\end{pro}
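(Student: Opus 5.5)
We verify the defining identity \eqref{defi-mock-preLie-equation} of a mock-pre-Lie algebra for $\cdot_{\bowtie}$ directly. Since the identity is trilinear, it suffices to check it on homogeneous triples, each argument lying in $\g$ or in $\h$. That gives eight cases, and the symmetry between $\g$ and $\h$ in \eqref{matched-pair-mock-pre} reduces them to four: $(x,y,z)$, $(x,y,u)$, $(x,u,y)$ together with $(u,x,y)$, and the mirror cases with $\g$ and $\h$ exchanged. Because the anti-associator only needs to be skew-symmetric in its first two arguments, the cases $(x,u,y)$ and $(u,x,y)$ must be handled as a single identity.

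The all-$\g$ case is the mock-pre-Lie identity of $\g$ itself.

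For $(x,y,u)$, I expand $(x\cdot y)\cdot u + x\cdot(y\cdot u) + (y\cdot x)\cdot u + y\cdot(x\cdot u)$ using $x\cdot_{\bowtie}u=\rho_{\g}(x)u+\mu_{\h}(u)x$.
\begin{itemize}
\item The $\h$-component is $\rho_{\g}(\{x,y\}_{\g})u+\rho_{\g}(x)\rho_{\g}(y)u+\rho_{\g}(y)\rho_{\g}(x)u$, which vanishes because $\rho_{\g}$ is a representation of the mock-Lie algebra $\g_c$.
\item The $\g$-component collects $\mu_{\h}(u)\{x,y\}_{\g}$, $\mu_{\h}(\rho_{\g}(y)u)x$, $x\cdot_{\g}\mu_{\h}(u)y$ and the terms obtained by swapping $x$ and $y$. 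This is exactly \eqref{mockpair1}.
\end{itemize}

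For the mixed case $(x,u,y)$ against $(u,x,y)$, I need
\[
(x\cdot u)\cdot y + x\cdot(u\cdot y) + (u\cdot x)\cdot y + u\cdot(x\cdot y)=0 .
\]
\begin{itemize}
\item The $\g$-component contains $\rho_{\h}(\rho_{\g}(x)u+\mu_{\g}(x)u)y$, $(\mu_{\h}(u)x+\rho_{\h}(u)x)\cdot_{\g}y$, $\mu_{\h}(\mu_{\g}(y)u)x$, $x\cdot_{\g}\rho_{\h}(u)y$ and $\rho_{\h}(u)(x\cdot_{\g}y)$. This should be precisely \eqref{mockpair3}.
\item The $\h$-component is $\mu_{\g}(y)\rho_{\g}(x)u+\rho_{\g}(x)\mu_{\g}(y)u+\mu_{\g}(y)\mu_{\g}(x)u+\mu_{\g}(x\cdot_{\g}y)u$, which vanishes by \eqref{mock-pre-rep} for the representation $(\rho_{\g},\mu_{\g})$.
\end{itemize}
The remaining cases follow by exchanging the roles of $\g$ and $\h$, using \eqref{mockpair2}, \eqref{mockpair4} and the representation axioms for $(\rho_{\h},\mu_{\h})$.

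For the converse, assume $\g$ and $\h$ are subalgebras and define $\rho_{\g},\mu_{\g},\rho_{\h},\mu_{\h}$ by \eqref{mock-pre-Lie-rho-mu}. Then $\cdot_{\bowtie}$ has the form \eqref{matched-pair-mock-pre}, and the computations above run backwards. Projecting the case $(x,y,u)$ to $\h$ shows that $\rho_{\g}$ is a representation of $\g_c$. Projecting the mixed case to $\h$ gives \eqref{mock-pre-rep}, so $(\rho_{\g},\mu_{\g})$ is a representation. Projecting both cases to $\g$ gives \eqref{mockpair1} and \eqref{mockpair3}, and the symmetric cases yield the rest.

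The main obstacle is the mixed case: correctly pairing the term $(x\cdot u)\cdot y$ with $(u\cdot x)\cdot y$, and matching the sign conventions and grouping of \eqref{mockpair3} exactly. I would do this bookkeeping carefully term by term.
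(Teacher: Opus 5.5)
Your proposal is correct. The paper states this proposition without proof, and your case-by-case verification is the routine computation it leaves implicit. The $\g$- and $\h$-components you list for the triples $(x,y,u)$ and $(x,u,y)$ agree term by term with \eqref{mockpair1}, \eqref{mockpair3}, the mock-Lie representation condition for $\rho_{\g}$, and \eqref{mock-pre-rep}. The $\g\leftrightarrow\h$ mirrors yield \eqref{mockpair2} and \eqref{mockpair4}, and reading these projected identities backwards gives the converse.
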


\begin{defi} {\rm(\cite{Na-Attan})}
Let $(\g,\{\cdot,\cdot\}_{\g})$ and $(\h,\{\cdot,\cdot\}_{\h})$ be two mock-Lie algebras. If there exists a representation $\rho:\g\rightarrow\g(\h)$ of $\g$
on $\h$ a representation $\mu:\h\rightarrow \gl(\g)$ of $\h$ on $\g$ such that
\begin{eqnarray}
 \label{matched-ML1}\rho(x)(\{u,v\}_{\h})+\{\rho(x)u, v\}_{\h} +\{u,  \rho(x)v\}_{\h}+\rho(\mu(u)x)v+\rho(\mu(v)x)u&=&0,\\
    \label{matched-ML2}\mu(u)(\{x, y\}_{\g})+\{\mu(u)x , y\}_{\g} +\{x, \mu(u)y\}_{\g}+\mu(\rho(x)u)y+\mu(\rho(y)u)x&=&0,
\end{eqnarray}
for all $x,y\in \g,u,v\in \h,$ then we call $(\g, \h; \rho, \mu)$ a {\bf matched pair of mock-Lie algebras.} We will denote a matched pair of mock-Lie algebras by $(\g, \h; \rho, \mu)$, or simply by $(\g, \h)$.
\end{defi}

In this case, there is a mock-Lie algebra
structure on the direct sum  $\g\oplus \h$ with the multiplication defined by
\begin{equation}\label{eqmatch}
    \{x+u,y+v\}_{\bowtie}:=\{x,y\}_{\g} +\mu(v)x+\mu(u)y+\{u,v\}_{\h}+\rho(y)u+\rho(x)v,\;\forall x,y\in \g,\;u,v\in \h.
\end{equation}
This mock-Lie algebra is denoted by $\g {\bowtie} \h$.
Conversely, if $(\g\oplus \h,\{\cdot,\cdot\})$ is a mock-Lie algebra such that $\g$ and $\h$ are mock-Lie subalgebras, then $(\g, \h; \rho,\mu)$ is a matched pair of mock-Lie algebras, where the representations $\rho:\g\rightarrow\g(\h)$ and $\mu:\h\rightarrow \gl(\g)$  are determined by
\begin{eqnarray}
\{x,u\}=\rho(x)u+\mu(u)x, \quad \forall x\in \g, u\in\h.
\end{eqnarray}

\begin{pro}{\rm(\cite{Na-Attan})}\label{ref-mocklie-mockpre}
Let $(\g,\h;(\rho_{\g},\mu_{\g}),(\rho_{\h},\mu_{\h}))$ be a matched pair of mock-pre-Lie algebras. Then $(\g_{c},\h_{c};\rho_{\g}+\mu_{\g},\rho_{\h}+\mu_{\h})$ is a matched pair of mock-Lie algebras.
\end{pro}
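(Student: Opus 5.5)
The plan is to work through the structural characterizations rather than the defining identities. By Proposition \ref{pro-matched-pair-mock-pre}, the matched pair $(\g,\h;(\rho_{\g},\mu_{\g}),(\rho_{\h},\mu_{\h}))$ produces a mock-pre-Lie algebra $(\g\oplus\h,\cdot_{\bowtie})$ given by \eqref{matched-pair-mock-pre}, in which $\g$ and $\h$ are mock-pre-Lie subalgebras. Proposition \ref{pre-jj to jj} then makes its sub-adjacent algebra $(\g\oplus\h,\{\cdot,\cdot\}_c)$, with $\{a,b\}_c=a\cdot_{\bowtie}b+b\cdot_{\bowtie}a$, a mock-Lie algebra. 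Since $\g$ and $\h$ are closed under $\cdot_{\bowtie}$, they are also closed under $\{\cdot,\cdot\}_c$. The restrictions of $\{\cdot,\cdot\}_c$ to $\g$ and to $\h$ are the sub-adjacent brackets of $\g_c$ and $\h_c$, so $\g_c$ and $\h_c$ are mock-Lie subalgebras of $(\g\oplus\h)_c$.

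Next I apply the converse statement following \eqref{eqmatch}. A mock-Lie algebra on $\g\oplus\h$ in which both summands are subalgebras determines a matched pair of mock-Lie algebras $(\g_c,\h_c;\rho,\mu)$ through $\{x,u\}=\rho(x)u+\mu(u)x$ for $x\in\g$, $u\in\h$. It only remains to identify $\rho$ and $\mu$. Using \eqref{mock-pre-Lie-rho-mu}, for $x\in\g$ and $u\in\h$ we compute
\[
\{x,u\}_c=x\cdot_{\bowtie}u+u\cdot_{\bowtie}x=\big(\rho_{\g}(x)u+\mu_{\g}(x)u\big)+\big(\mu_{\h}(u)x+\rho_{\h}(u)x\big).
\]
The first bracketed term lies in $\h$ and the second in $\g$. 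Comparing components gives $\rho=\rho_{\g}+\mu_{\g}$ and $\mu=\rho_{\h}+\mu_{\h}$, which is the claim.

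The step that needs the most care is the converse after \eqref{eqmatch}, which the paper states without proof. If I do not take it as given, I would justify it by expanding the Jacobi-type identity of Definition \ref{mock-Lie} on mixed triples. The triple $(x,y,u)$ yields that $\rho$ is a representation (the $\h$-component) together with \eqref{matched-ML2} (the $\g$-component). The triple $(x,u,v)$ yields the symmetric pair of conditions: that $\mu$ is a representation, and \eqref{matched-ML1}. This is a routine component-wise comparison.

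As a consistency check, $\rho_{\g}+\mu_{\g}$ is exactly the representation $\rho$ of the semidirect product $\g\ltimes_{\rho_{\g},\mu_{\g}}\h$ restricted to the sub-adjacent mock-Lie algebra, since $\{x,u\}=\rho_{\g}(x)u+\mu_{\g}(x)u$ there. So these maps being representations is also consistent with the semidirect product lemma.
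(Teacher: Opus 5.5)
The paper gives no proof of this proposition; it only cites \cite{Na-Attan}, so there is nothing in the paper to match against. Your proof is correct. Passing to the bicrossed product $(\g\oplus\h,\cdot_{\bowtie})$ of Proposition~\ref{pro-matched-pair-mock-pre} and symmetrizing is sound. Reading off $\rho=\rho_{\g}+\mu_{\g}$ and $\mu=\rho_{\h}+\mu_{\h}$ from $\{x,u\}_c=x\cdot_{\bowtie}u+u\cdot_{\bowtie}x$ is also sound. Your component-wise justification of the converse after \eqref{eqmatch} checks out:
\begin{itemize}
\item the two components of the Jacobi identity on $(x,y,u)$ give that $\rho$ is a representation and \eqref{matched-ML2};
\item those on $(x,u,v)$ give that $\mu$ is a representation and \eqref{matched-ML1}.
\end{itemize}

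The other natural route is a direct check at the level of identities. First, $\rho_{\g}+\mu_{\g}$ is a representation of $\g_c$: add \eqref{mock-pre-rep} for $(x,y)$ and for $(y,x)$ to \eqref{rep-mockLie} for $\rho_{\g}$. Second, \eqref{matched-ML2} for the pair $(\rho_{\g}+\mu_{\g},\rho_{\h}+\mu_{\h})$ is exactly the sum of \eqref{mockpair1}, \eqref{mockpair3}, and \eqref{mockpair3} with $x\leftrightarrow y$. Symmetrically, \eqref{matched-ML1} is \eqref{mockpair2} plus \eqref{mockpair4} plus \eqref{mockpair4} with $u\leftrightarrow v$.

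Your route is shorter and explains the result conceptually: the sub-adjacent algebra of $\g\bowtie\h$ is $\g_c\bowtie\h_c$. However, it rests on two characterizations the paper states without proof, namely Proposition~\ref{pro-matched-pair-mock-pre} and the converse after \eqref{eqmatch}. The direct route is self-contained and shows exactly which defining identities are used.
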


\begin{thm}\label{equivalent-match pre-Lie}
Let $(\g,\cdot_{\g})$ and $(\g^*,\cdot_{\g^*})$ be two mock-pre-Lie algebras. Then $(\g,\g^*;(-L^*-R^*, R^*),(-\huaL^*-\huaR^*, \huaR^*))$ is a matched pair of mock-pre-Lie algebras if and only if
$(\g_{c},\g_{c}^*;-L^*,-\huaL^*)$ is a matched pair of mock-Lie algebras.
\end{thm}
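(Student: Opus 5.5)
One direction is immediate from Proposition \ref{ref-mocklie-mockpre}. If $(\g,\g^*;(-L^*-R^*, R^*),(-\huaL^*-\huaR^*, \huaR^*))$ is a matched pair of mock-pre-Lie algebras, then $(\g_c,\g^*_c;\rho_{\g}+\mu_{\g},\rho_{\h}+\mu_{\h})$ is a matched pair of mock-Lie algebras. Here $\rho_{\g}+\mu_{\g}=-L^*-R^*+R^*=-L^*$ and $\rho_{\h}+\mu_{\h}=-\huaL^*$. This is exactly $(\g_{c},\g_{c}^*;-L^*,-\huaL^*)$. Note that $-L^*$ and $-\huaL^*$ are representations of $\g_c$ and $\g^*_c$ by Proposition \ref{pre-jj to jj} and Lemma \ref{dual-rep-mockLie}, and $(-L^*-R^*,R^*)$ is a representation of $(\g,\cdot_\g)$ by Proposition \ref{dual-rep-mockpre}. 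So in both directions only the compatibility identities need checking.

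For the converse, I would show that each of \eqref{mockpair1}--\eqref{mockpair4}, with $\rho_\g=-L^*-R^*$, $\mu_\g=R^*$, $\rho_{\h}=-\huaL^*-\huaR^*$, $\mu_{\h}=\huaR^*$, is equivalent to \eqref{matched-ML1} or \eqref{matched-ML2} for $\rho=-L^*$, $\mu=-\huaL^*$. The method is to pair every identity with an arbitrary test element via $\langle\cdot,\cdot\rangle$. Everything then becomes a scalar expression trilinear in two elements of one space and one of the other, built only from $\cdot_\g$, $\cdot_{\g^*}$ and the pairing. The symmetry $\g\leftrightarrow\g^*$ of the statement means only \eqref{mockpair1} and \eqref{mockpair3} need treating; \eqref{mockpair2} and \eqref{mockpair4} follow by exchanging roles.

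Concretely, I would pair \eqref{mockpair1}, which is valued in $\g$, with $\zeta\in\g^*$. I would expand the terms $\langle\huaR^*_\xi\{x,y\},\zeta\rangle=-\langle \{x,y\},\zeta\cdot\xi\rangle$ and $\langle x\cdot(\huaR^*_\xi y),\zeta\rangle=\langle \huaR^*_\xi y,L^*_x\zeta\rangle$ (up to sign) and so on. The result should coincide with \eqref{matched-ML1} or \eqref{matched-ML2} paired suitably, after renaming variables. I expect it to match \eqref{matched-ML2} with its $\g^*$ argument placed differently, since the pairing moves operators across. Similarly, I would pair \eqref{mockpair3} with $\zeta$. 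Because the two-sided structures $L,R$ appear, I expect it to reduce to a linear combination of \eqref{matched-ML1} evaluated at permuted arguments (using $\langle\rho(x)u\cdots\rangle$ transposition) together with \eqref{matched-ML2}. I would also use the mock-pre-Lie axiom \eqref{defi-mock-preLie-equation} on $\g$ and $\g^*$ to cancel the leftover pure-associator terms.

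The main obstacle is this bookkeeping for \eqref{mockpair3} and \eqref{mockpair4}. There the left-hand side involves $x\cdot_\g y$ rather than the symmetric bracket, so it is not obvious that the identity follows from the mock-Lie ones alone. My first attempt is to dualize all terms into the form $\langle\zeta\otimes\xi, \text{(expression in }x,y)\rangle$. I would then group them into symmetric and nonsymmetric parts in the swap $(x,\zeta)$, and identify the result with \eqref{matched-ML1} taken at $(u,v)=(\xi,\zeta)$. If a residual remains, I would eliminate it using the mock-pre-Lie identities of $\g^*$ in dual form.
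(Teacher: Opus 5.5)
Your plan follows the paper's proof. The forward direction comes from Proposition~\ref{ref-mocklie-mockpre} with $\rho_{\g}+\mu_{\g}=-L^*$ and $\rho_{\h}+\mu_{\h}=-\huaL^*$. For the converse, you pair each compatibility condition with a test vector and match terms, using the $\g\leftrightarrow\g^*$ symmetry to halve the work. The gap is that the proposal stops where the content of the theorem lies. For \eqref{mockpair3}, and hence \eqref{mockpair4}, you establish nothing. You only guess that it is a combination of \eqref{matched-ML1} at permuted arguments with \eqref{matched-ML2}, possibly corrected by the mock-pre-Lie axiom. You also remark that it is not obvious it follows from the mock-Lie conditions alone, which is exactly what the theorem asserts. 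The guessed form is wrong, and as written the converse is unproved.

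The missing observation is that the identities $\rho_{\g}+\mu_{\g}=-L^*=\rho$ and $\rho_{\h}+\mu_{\h}=-\huaL^*=\mu$, which you already used, collapse two terms of \eqref{mockpair3}. With $u=\xi$ it reads
\[
\rho_{\h}(\xi)(x\cdot_{\g}y)+\rho_{\h}(\rho(x)\xi)y+(\mu(\xi)x)\cdot_{\g}y+\huaR^*_{R^*_y\xi}x+x\cdot_{\g}(\rho_{\h}(\xi)y)=0 .
\]
Pair this with $\eta\in\g^*$, using $\langle\rho_{\h}(\zeta)z,\eta\rangle=\langle z,\{\zeta,\eta\}_{\g^*}\rangle$. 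The five terms become
\[
\langle x\cdot_{\g}y,\{\xi,\eta\}_{\g^*}\rangle,\quad \langle y,\{\rho(x)\xi,\eta\}_{\g^*}\rangle,\quad \langle\eta,(\mu(\xi)x)\cdot_{\g}y\rangle,\quad -\langle x,\eta\cdot_{\g^*}R^*_y\xi\rangle,\quad \langle y,\{\xi,\rho(x)\eta\}_{\g^*}\rangle .
\]
These are, one by one, the five terms of \eqref{matched-ML1} at $(x;\xi,\eta)$ paired with $y$; the fourth is $\langle\rho(\mu(\eta)x)\xi,y\rangle$. So \eqref{mockpair3} is simply the transpose of \eqref{matched-ML1}, with the same variables.

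Neither \eqref{matched-ML2} nor the mock-pre-Lie axiom is needed. The nonsymmetric product $x\cdot_{\g}y$ is no obstacle, because $\rho=-L^*$ itself produces it. Likewise, \eqref{mockpair2} at $(x;\xi,\eta)$ paired with $y$ equals minus \eqref{matched-ML1} at $(y;\xi,\eta)$ paired with $x$. Symmetry then gives \eqref{mockpair1}$\Leftrightarrow$\eqref{mockpair4}$\Leftrightarrow$\eqref{matched-ML2}, which is exactly the chain of equivalences the paper records.

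A small slip: after pairing, the expressions are four-linear (two arguments from $\g$ and two from $\g^*$), not trilinear.
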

\begin{proof}
By Proposition \ref{ref-mocklie-mockpre}, we can obtain that if $(\g,\g^*;(-L^*-R^*, R^*),(-\huaL^*-\huaR^*, \huaR^*))$ is a matched pair of mock-pre-Lie algebras, then $(\g_{c},\g_{c}^*;-L^*,-\huaL^*)$ is a matched pair of mock-Lie algebras.  Conversely, we give a direct proof as follows. Let $\rho_{\g}=-L^*-R^*,\mu_{\g}=R^*,\rho_{\h}=-\huaL^*-\huaR^*,\mu_{\h}=\huaR^*$ and $\rho=-L^*,\mu=-\huaL^*,$ we have
\begin{eqnarray*}
{\rm equation~\eqref{mockpair2}}\Longleftrightarrow {\rm equation~\eqref{mockpair3}}&\Longleftrightarrow&{\rm equation~\eqref{matched-ML1}},\\
{\rm equation~\eqref{mockpair1}}\Longleftrightarrow {\rm equation~\eqref{mockpair4}}&\Longleftrightarrow&{\rm equation~\eqref{matched-ML2}}.
\end{eqnarray*}
We show an example how the equation \eqref{mockpair2} is equivalent to the equation \eqref{matched-ML1}. For all $x,y\in \g,u,v\in \h,$ we have
\begin{eqnarray*}
\langle R_x^*\{u,v\}_{\h},y\rangle&=&\langle L_y^*\{u,v\}_{\h},x\rangle;\\
\langle R^* \big(-\huaL_v^*(x)-\huaR_v^*(x) \big)u,y\rangle&=&\langle \{L_y^* u,v\}_{\h},x\rangle;\\
\langle R^* \big(-\huaL_u^*(x)-\huaR_u^*(x) \big)v,y\rangle&=&\langle \{L_y^* v,u\}_{\h},x\rangle;\\
\langle u\cdot_{\h}R_{x}^*v,y\rangle&=&-\langle L^*(\huaL^*_{u}y)v,x\rangle;\\
\langle v\cdot_{\h}R_{x}^*u,y\rangle&=&-\langle  L^*(\huaL^*_{v}y)u,x\rangle.
\end{eqnarray*}
The equivalence relations among the other equations can be proved similarly, and we omit the details here.
\end{proof}

\begin{thm}
Let $(\g,\cdot_{\g})$ and  $(\g^*,\cdot_{\g^*})$ be two mock-pre-Lie
algebras. Then the following conditions are equivalent:
\begin{itemize}
\item[{\rm(i)}] $(\g,\g^*,\alpha,\beta)$ is a mock-pre-Lie bialgebra.

\item[{\rm(ii)}]$(\g,\g^*;(-L^*-R^*, R^*),(-\huaL^*-\huaR^*, \huaR^*))$ is
a matched pair of mock-pre-Lie algebras.

\item[{\rm(iii)}] $(\huaG=\g\oplus\g^*,\g,\g^*)$ is a Manin triple of mock-pre-Lie algebras, where the invariant skew-symmetric bilinear form on $\g\oplus\g^*$ is given by \eqref{omega}.
\item[{\rm(iv)}] $(\g_{c},\g_{c}^*;-L^*,-\huaL^*)$ is a matched pair of mock-Lie algebras.

 \item[\rm(v)]$(\g_{c}\oplus \g_{c}^*,\{\cdot,\cdot\},\omega)$ is a phase space of the sub-adjacent mock-Lie algebra $(\g_c,\{\cdot,\cdot\}_{\g}).$
\end{itemize}
\end{thm}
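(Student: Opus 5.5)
The plan is to prove the chain of equivalences by linking each item to one already-established result. Most links are already available from earlier in the paper; only (i)$\Leftrightarrow$(iv) needs a direct dualization computation.

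\emph{Links that follow from earlier results.}
\begin{itemize}
\item (ii)$\Leftrightarrow$(iv) is exactly Theorem \ref{equivalent-match pre-Lie}.
\item (iii)$\Leftrightarrow$(v) is Theorem \ref{phase-space-Manin-triple}, once we check that the mock-Lie bracket on $\g\oplus\g^*$ is the one in \eqref{matched-pair-MockLie}.
\item (iv)$\Leftrightarrow$(v): by the converse part of the matched-pair description, a mock-Lie structure on $\g_c\oplus\g_c^*$ with $\g_c,\g_c^*$ as subalgebras is the same as a matched pair. In our case the actions are $-L^*$ and $-\huaL^*$, i.e.\ the bracket \eqref{matched-pair-MockLie}.
\end{itemize}
It remains to see that $\omega$ is automatically symplectic for this bracket. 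Expand the cyclic sum in \eqref{symplectic-mock-Lie} as in the (suppressed) computation after \eqref{matched-pair-MockLie}. The terms cancel using only the definitions of $L^*$ and $\huaL^*$ and the formula for the sub-adjacent brackets. So a matched pair of the form in (iv) is the same thing as a phase space of the form in (v).

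\emph{Manin triple versus matched pair of mock-pre-Lie algebras.} To connect (ii) and (iii), I would show that the unique mock-pre-Lie product on $\g\oplus\g^*$ for which $\g$ and $\g^*$ are subalgebras and $\omega$ is invariant in the sense of \eqref{quadratic-mock-pre} has mixed products
$$x\cdot\xi=(-L^*_x-R^*_x)\xi+\huaR^*_\xi x,\qquad \xi\cdot x=R^*_x\xi+(-\huaL^*_\xi-\huaR^*_\xi)x.$$
To derive this, pair $x\cdot\xi$ with $y$ and with $\eta$, and use \eqref{quadratic-mock-pre} together with \eqref{quadratic-mock-pre-2}. Then Proposition \ref{pro-matched-pair-mock-pre} gives (ii)$\Leftrightarrow$(iii). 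For the converse we also need invariance of $\omega$ for the product \eqref{matched-pair-mock-pre}; this is a direct pairing check on the four mixed types of triples.

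\emph{The main step: (i)$\Leftrightarrow$(iv).} I would show that \eqref{alpha-condition} is equivalent to \eqref{matched-ML2} with $\rho=-L^*$, $\mu=-\huaL^*$, and that \eqref{beta-condition} is equivalent to \eqref{matched-ML1}. For the first, pair both sides of \eqref{alpha-condition} with $\xi\otimes\eta$. The left side gives $\langle\{x,y\}_\g,\xi\cdot_{\g^*}\eta\rangle$. Each right-hand term, e.g.\ $\langle(\Id\otimes R_y)\alpha(x),\xi\otimes\eta\rangle=-\langle x,\xi\cdot_{\g^*}R^*_y\eta\rangle$, becomes a pairing of $x$ or $y$ against an expression in $\g^*$. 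Separately, pair \eqref{matched-ML2} with a test element $\zeta$ and move all operators onto $\zeta$ and the $\g^*$-slots.

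This bookkeeping is the obstacle I expect. Matching the resulting six or so terms requires the right relabeling of which dual element plays $u$, $v$ versus $\zeta$, together with the identity $\langle \huaL^*_\xi x,\eta\rangle=-\langle x,\xi\cdot\eta\rangle$ and the symmetry of $\{\cdot,\cdot\}$. I would first try substituting $u=\xi$ and test element $\eta$, and then, if the terms do not align, symmetrize in $\xi,\eta$ using commutativity of the sub-adjacent bracket. The $\beta$ case is symmetric with the roles of $\g$ and $\g^*$ exchanged.

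Combining the links gives (i)$\Leftrightarrow$(iv)$\Leftrightarrow$(ii)$\Leftrightarrow$(iii)$\Leftrightarrow$(v), which proves the theorem.
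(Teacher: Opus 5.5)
Your proposal is correct and follows essentially the same route as the paper. The paper also gets (ii)$\Leftrightarrow$(iv) from Theorem \ref{equivalent-match pre-Lie}. It gets (i)$\Leftrightarrow$(iv) by pairing \eqref{matched-ML1} with $y\in\g$ and \eqref{matched-ML2} with $\eta\in\g^*$; your first choice, $u=\xi$ with test element $\eta$, already matches term by term, so no symmetrization is needed. It gets (ii)$\Leftrightarrow$(iii) by reading off $\rho_{\g}=-L^*-R^*$, $\mu_{\g}=R^*$ (and the analogues on $\g^*$) from invariance and then applying Proposition \ref{pro-matched-pair-mock-pre}, and (iii)$\Leftrightarrow$(v) from Theorem \ref{phase-space-Manin-triple}.

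Your extra link (iv)$\Leftrightarrow$(v) is also valid: the cyclic sum for $\omega$ with the bracket \eqref{matched-pair-MockLie} does vanish from the definitions alone. It is redundant, though, given the other equivalences.
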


\begin{proof}
By Theorem \ref{equivalent-match pre-Lie}, (ii) is equivalent to (iv).
First we prove that (iv) is equivalent to (i). Let $(\g,\cdot_{\g})$ and $(\g^*,\cdot_{\g^*})$ be two mock-pre-Lie algebras.  Let $(\g_{c},\g_{c}^*;-L^*,-\huaL^*)$ be a matched pair of mock-Lie algebras. By \eqref{matched-ML1}, taking pair with $y\in \g,$ we have
\begin{eqnarray*}
&&\langle-L^*_{x}\{\xi,\eta\}_{\g^*}+(-L^*_{x}\xi)\cdot_{\g^*}\eta+\eta\cdot_{\g^*}(-L^*_{x}\xi)+\xi\cdot_{\g^*}(-L^*_{x}\eta)\\
&&+(-L^*_{x}\eta)\cdot_{\g^*}\xi+L^*_{(\huaL^*_{\xi}x)}\eta+L^*_{(\huaL^*_{\eta}x)}\xi,y\rangle\\
&=&\langle\{\xi,\eta\}_{\g^*},x\cdot_{\g}y\rangle-\langle\xi,x\cdot_{\g}(\huaR^*_{\eta}y)\rangle-\langle\xi,x\cdot_{\g}(\huaL^*_{\eta}y)\rangle
-\langle\eta,x\cdot_{\g}(\huaL^*_{\xi}y)\rangle\\
&&-\langle\eta,x\cdot_{\g}(\huaR^*_{\xi}y)\rangle
-\langle\eta,(\huaL^*_{\xi}x)\cdot_{\g}y\rangle-\langle\xi,(\huaL^*_{\eta}x)\cdot_{\g}y\rangle\\
&=&\langle\beta\{\xi,\eta\}_{\g^*}+({\Id}\otimes (\huaR_{\eta}+\huaL_{\eta}))\beta(\xi)+({\Id}\otimes (\huaL_{\xi}+\huaR_{\xi}))\beta(\eta)
\\
&&+(\huaL_{\eta}\otimes {\Id})\beta(\xi)+(\huaL_{\xi}\otimes {\Id})\beta(\eta),x\otimes y\rangle\\
&=&0,
\end{eqnarray*}
which implies that $\beta\{\xi,\eta\}_{\g^*}=-({\Id}\otimes (\huaR_{\eta}+\huaL_{\eta})+\huaL_{\eta}\otimes {\Id})\beta(\xi)-({\Id}\otimes (\huaL_{\xi}+\huaR_{\xi})+\huaL_{\xi}\otimes {\Id})\beta(\eta).$ Similarly, by \eqref{matched-ML2}, taking pair with $\eta\in \g^*,$ we also have
\begin{eqnarray*}
&&\langle-\huaL^*_{\xi}\{x,y\}_{\g}+(-\huaL^*_{\xi}x)\cdot_{\g}y+y\cdot_{\g}(-\huaL^*_{\xi}x)+x\cdot_{\g}(-\huaL^*_{\xi}y)\\
&&+(-\huaL^*_{\xi}y)\cdot_{\g}x+\huaL^*_{(L^*_{x}\xi)}y+\huaL^*_{(L^*_{y}\xi)}x,\eta\rangle\\
&=&\langle\{x,y\}_{\g},\xi\cdot_{\g^*}\eta\rangle-\langle x,\xi\cdot_{\g^*}(R^*_{y}\eta)\rangle-\langle x,\xi\cdot_{\g^*}(L^*_{y}\eta)\rangle
-\langle y,\xi\cdot_{\g^*}(L^*_{x}\eta)\rangle\\
&&-\langle y,\xi\cdot_{\g^*}(R^*_{x}\eta)\rangle-\langle y,(L^*_{x}\xi)\cdot_{\g^*}\eta\rangle-\langle x,(L^*_{y}\xi)\cdot_{\g^*}\eta\rangle\\
&=&\langle\alpha\{x,y\}_{\g}+({\Id}\otimes (R_{y}+L_{y}))\alpha(x)+({\Id}\otimes (L_{x}+R_{x}))\alpha(y)\\
&&+(L_{y}\otimes {\Id})\alpha(x)+(L_{x}\otimes {\Id})\alpha(y),\xi\otimes\eta\rangle\\
&=&0,
\end{eqnarray*}
which implies that $\alpha\{x,y\}_{\g}=-({\Id}\otimes (R_{y}+L_{y})+L_{y}\otimes {\Id})\alpha(x)-({\Id}\otimes (L_{x}+R_{x})+L_{x}\otimes {\Id})\alpha(y).$
Thus, $(\g,\g^*,\alpha,\beta)$ is a mock-pre-Lie bialgebra if and only if $(\g_{c},\g_{c}^*;-L^*,-\huaL^*)$ is
a matched pair of mock-Lie algebras.

Next we prove that (ii) is equivalent to (iii).
Let $(\g,\g^*;(-L^*-R^*, R^*),(-\huaL^*-\huaR^*, \huaR^*))$ be
a matched pair of mock-pre-Lie algebras. Then $(\g\oplus\g^*,\cdot_{\bowtie}) $ is a  mock-pre-Lie algebra, where$\cdot_{\bowtie}$ is given by \eqref{matched-pair-mock-pre}. We only need to prove that $\omega$ satisfies the invariant condition \eqref{quadratic-mock-pre}. For all $x, y, z \in \g$ and $\xi,\eta,\zeta\in \g^*,$ we have
\begin{eqnarray*}
  &&\omega((x+\xi)\cdot_{\bowtie}(y+\eta),z+\zeta)\\
  &=&\omega(x\cdot_{\g}y-(\huaL_{u}^*+\huaR_{\xi}^*)y+\huaR_{\eta}^*x+\xi\cdot_{\g^*}\eta-(L_{x}^*+R_{x}^*)\eta+R_{y}^*\xi,z+\zeta)\\
   &=&\langle \xi\cdot_{\g^*}\eta,z\rangle+\langle \eta,\{x,z\}_{\g}\rangle-\langle \xi,z\cdot_{\g}y\rangle-\langle \zeta,x\cdot_{\g}y\rangle-\langle \{\xi,\zeta\}_{\g^*},y\rangle+\langle \zeta\cdot_{\g^*}\eta,x\rangle\\
   &=&\langle \eta,\{x,z\}_{\g}-\huaL_{\xi}^*z-\huaL_{\zeta}^*x\rangle-\langle\{\xi,\zeta\}_{\g^*}-L_{z}^*\xi-L_{x}^*\zeta,y\rangle\\
    &=&\omega( y+\eta,\{x,z\}_{\g}-\huaL_{\xi}^*z-\huaL_{\zeta}^*x+\{\xi,\zeta\}_{\g^*}-L_{z}^*\xi-L_{x}^*\zeta)\\
    &=&\omega( y+\eta,(x+\xi)\cdot_{\bowtie}(z+\zeta)+(z+\zeta)\cdot_{\bowtie}(x+\xi)).
\end{eqnarray*}
Thus, $\omega$ satisfies the invariant condition \eqref{quadratic-mock-pre}.

Conversely, let $(\g\oplus\g^*, \g,\g^*)$ be a Manin triple of mock-pre-Lie algebras, where the invariant skew-symmetric bilinear form given by \eqref{omega}.
By Proposition \ref{pro-matched-pair-mock-pre}, we have $(\g, \g^*; (\rho_{\g}, \mu_{\g}), (\rho_{\g^*}, \mu_{\g^*}))$ is a matched pair of mock-pre-Lie algebras, where $(\rho_{\g}, \mu_{\g})$ and $(\rho_{\g^*}, \mu_{\g^*})$ are determined by \eqref{mock-pre-Lie-rho-mu}.
More precisely, for all $x,y\in \g,\xi,\eta\in \g^*$, by \eqref{quadratic-mock-pre}, we have
\begin{eqnarray*}
  \langle \rho_{\g}(x)\xi,y\rangle=\omega(x\cdot_{\huaG}\xi,y)
  =\omega(\xi,x\cdot_{\g}y+y\cdot_{\g}x)=\omega(\xi,L_{x}y+R_{x}y)=\langle -L^*_{x}\xi-R^*_{x}\xi,y\rangle,
\end{eqnarray*}
which implies that $\rho_{\g}=-L^*-R^*.$ By \eqref{quadratic-mock-pre-2}, we have
\begin{eqnarray*}
  \langle \mu_{\g}(x)\xi,y\rangle=\omega(\xi\cdot_{\huaG}x,y)
  =\omega(y\cdot_{\g}x,\xi)=-\langle\xi,R_{x}y\rangle
  =\langle R^*_{x}\xi,y\rangle,
\end{eqnarray*}
which implies that $\mu_{\g}=R^*.$ Similarly, we can obtain $\rho_{\g^*}=-\huaL^*-\huaR^*$ and $\mu_{\g^*}=\huaR^*,$
thus the matched pair of mock-pre-Lie algebras is $(\g,\g^*;(-L^*-R^*, R^*),(-\huaL^*-\huaR^*, \huaR^*)).$ By Theorem \ref{phase-space-Manin-triple}, we have  (iii) is equivalent to (v).
Thus we finish the proof.
\end{proof}

\begin{defi}
Let $(\g,\g^*,\alpha_{\g},\beta_{\g})$ and $(\h,\h^*,\alpha_{\h},\beta_{\h})$ be two mock-pre-Lie bialgebras. A linear map $\varphi:\g \to \h$ is called {\bf a homomorphism of mock-pre-Lie  bialgebras}, if $\varphi:\g\rightarrow \h$ is a homomorphism of mock-pre-Lie algebras such that $\varphi^*:\h^* \rightarrow\g^*$ is also a homomorphism of mock-pre-Lie  algebras, or equivalently, $\varphi$ satisfying
\begin{eqnarray}
(\varphi \otimes \varphi)\circ \alpha_{\g}=\alpha_{\h} \circ \varphi, \quad (\varphi^* \otimes \varphi^*)\circ \beta_{\h}=\beta_{\g} \circ \varphi^*.
\end{eqnarray}
Furthermore, if $\varphi:\g \rightarrow \h$ is a linear isomorphism of vector spaces, then $\varphi$ is called {\bf an isomorphism of mock-pre-Lie bialgebras.}
\end{defi}

\begin{ex}\label{mock-pre-Lie-example}
Let $(\g, \g^*)$ be a mock-pre-Lie bialgebra. Then $(\g^*, \g)$ is also a mock-pre-Lie bialgebra.
\end{ex}

\begin{ex}{\rm
Let $(\g,\cdot_{\g})$ be a mock-pre-Lie algebra and the mock-pre-Lie algebra structure on $\g^*$ be trivial, then in this case $(\g,\g^*,0,\beta)$ is a mock-pre-Lie bialgebra, which corresponds to the mock-pre-Lie algebra $\g \ltimes_{-L^*-R^*,R^*} \g^*.$
}
\end{ex}

\subsection{Coboundary mock-pre-Lie bialgebras and the mock-pre-Lie Yang-Baxter equation}
In this subsection, we consider a special class of mock-pre-Lie bialgebras called coboundary mock-pre-Lie bialgebras and introduce the notion of mock-pre-Lie classical Yang-Baxter equation.
\begin{defi}
A mock-pre-Lie bialgebra $(\g,\g^*,\alpha, \beta)$ is called \textbf{coboundary} if   there exists an element $r\in \g\otimes \g$ such that for any $x\in \g$,
\begin{equation}\label{coboundary}
\alpha(x)=({\Id}\otimes R_{x}+{\Id}\otimes L_{x}-L_{x}\otimes {\Id})r.
\end{equation}
\end{defi}
Let $\g$ be a mock-pre-Lie algebra whose product is given by $\beta:\g\otimes\g\rightarrow\g$ and $r\in \g\otimes\g.$ Suppose that
$\alpha$ satisfies the equation \eqref{coboundary}, then we can directly verify that \eqref{alpha-condition} is satisfied.
Therefore, $(\g,\g^*,\alpha, \beta)$ is a mock-pre-Lie bialgebra if and only if the following conditions hold:
\begin{itemize}
\item[{\rm(i)}] $\alpha^* : \g^*\otimes \g^* \rightarrow \g^*$ is a mock-pre-Lie algebra structure on $\g^*;$
\item[{\rm(ii)}] $\beta : \g^* \rightarrow \g^* \otimes \g^*$ satisfies the compatibility condition \eqref{beta-condition}.
\end{itemize}
\emptycomment{
\hou{Verify that \eqref{alpha-condition} is satisfied}
\begin{proof}
\begin{eqnarray*}
  LHS=\alpha\{x,y\}_{\g}&=&({\Id}\otimes R_{\{x,y\}_{\g}}+{\Id}\otimes L_{\{x,y\}_{\g}}-L_{\{x,y\}_{\g}}\otimes {\Id})(a\otimes b)\\
  &=&a\otimes\{b,\{x,y\}_{\g}\}_{\g}-(x\cdot_{\g}y)\cdot_{\g}a\otimes b-(y\cdot_{\g}x)\cdot_{\g}a\otimes b
\end{eqnarray*}
\begin{eqnarray*}
RHS&=&-({\Id}\otimes (R_{y}+L_{y})+L_{y}\otimes {\Id})\alpha(x)-({\Id}\otimes (L_{x}+R_{x})+L_{x}\otimes {\Id})\alpha(y)\\
&=&-a\otimes\{x,\{y,b\}_{\g}\}_{\g}-a\otimes\{y,\{b,x\}_{\g}\}_{\g}+[y\cdot_{\g}(x\cdot_{\g}a)+x\cdot_{\g}(y\cdot_{\g}a)]\otimes b
\end{eqnarray*}
\end{proof}}

\begin{pro}
Let $(\g, \cdot_{\g})$ be a mock-pre-Lie algebra whose product is given by $\beta^*: \g \otimes \g \rightarrow \g$ and $r \in \g \otimes \g$. Suppose there exists a mock-pre-Lie algebra structure $\cdot_{\g^*}$ on $\g^*$
given by $\alpha^* : \g^* \otimes \g^* \rightarrow \g^*$, where $\alpha$ is given by \eqref{coboundary}. Then $\beta : \g^* \rightarrow \g^* \otimes \g^*$ satisfies the compatibility condition \eqref{beta-condition} if and only if $r$ satisfies
\begin{equation}
[P_{-}(x \cdot_{\g} y) + P_{+}(x)P_{-}(y)](r- \tau(r))=0, \quad  \forall x, y\in \g,
\end{equation}
where $P_{\pm}(x) = L_x \otimes {\Id} \pm {\Id} \otimes L_x,$ the linear map $\tau:\g\otimes\g\rightarrow\g\otimes\g$ is the flip operator given by $\tau(x\otimes y)=y\otimes x.$
\end{pro}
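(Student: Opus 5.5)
Since $\alpha$ is given by \eqref{coboundary}, the plan is to dualize \eqref{beta-condition}, rewrite each term using $r$, and collect everything into a single expression in $\g\otimes\g$. First, \eqref{beta-condition} is an identity in $\g^*\otimes\g^*$. Pairing it with $x\otimes y$ and using $\langle\beta(\xi),x\otimes y\rangle=\langle\xi,x\cdot_{\g}y\rangle$, it becomes an identity of linear functionals in $(\xi,\eta)$. The terms on the two sides are:
\begin{itemize}
\item $\langle\{\xi,\eta\}_{\g^*},x\cdot_\g y\rangle$;
\item terms of the form $\langle\xi, x\cdot_\g(\huaL^*_\eta y)\rangle$, $\langle\xi,x\cdot_\g(\huaR^*_\eta y)\rangle$ and $\langle\xi,(\huaL^*_\eta x)\cdot_\g y\rangle$, together with their $\xi\leftrightarrow\eta$ counterparts.
\end{itemize}
This is exactly the computation already carried out in the proof of (iv)$\Leftrightarrow$(i) above. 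Each term involves the product $\cdot_{\g^*}=\alpha^*$, so it can be expressed through $\alpha(z)$ for suitable $z\in\g$: for instance $\langle \xi\cdot_{\g^*}\eta,z\rangle=\langle\alpha(z),\xi\otimes\eta\rangle$.

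Next, I write $r=\sum_i a_i\otimes b_i$ and substitute $\alpha(z)=\sum_i\big(a_i\otimes (b_i\cdot z+z\cdot b_i)-z\cdot a_i\otimes b_i\big)$. Every term then becomes a pairing $\langle T,\xi\otimes\eta\rangle$, where $T$ is a sum of elements $\sum_i u_i\otimes v_i$ with $u_i,v_i$ built from $a_i,b_i,x,y$ by products in $\g$. Since $\xi,\eta$ are arbitrary, \eqref{beta-condition} is equivalent to $T=0$ in $\g\otimes\g$.

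The core of the argument is to simplify $T$. I would use the mock-pre-Lie identity \eqref{defi-mock-preLie-equation} in the form $L_{\{x,y\}_c}=-L_xL_y-L_yL_x$ (Proposition \ref{pre-jj to jj}). I would also use the mock-Lie Jacobi identity for $\{\cdot,\cdot\}_c$, and the representation identity \eqref{mock-pre-rep} for the regular representation, which relates $R$ and $L$. These should cancel all terms involving right multiplications $R$, leaving only left multiplications acting on the two tensor factors. The terms should then organize as operators applied to $r$ and to $\tau(r)$. With the sign conventions of the mock setting (where anticommutators replace commutators), they should assemble into $-(L_{x\cdot y}\otimes\Id-\Id\otimes L_{x\cdot y})$ plus $-(L_x\otimes\Id+\Id\otimes L_x)(L_y\otimes\Id-\Id\otimes L_y)$, applied to $r-\tau(r)$. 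This would give $T=-[P_-(x\cdot_\g y)+P_+(x)P_-(y)](r-\tau(r))$, which yields the stated equivalence.

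The main obstacle is this bookkeeping step. Many terms arise: roughly three types of $\alpha$-terms, each with three summands, on both the $\xi$ and $\eta$ sides. The $\xi\leftrightarrow\eta$ symmetrization must be shown to produce exactly the antisymmetrized tensor $r-\tau(r)$. The terms symmetric in $r$ must cancel, and this cancellation uses only the mock-pre-Lie axioms. I would first verify the claim on a rank-one tensor $r=a\otimes b$. I would track separately the contributions coming from $r$ and from $\tau(r)$, the latter arising from the $\eta$-side terms after the flip. Finally I would match coefficients against the expansion of $P_-(x\cdot y)+P_+(x)P_-(y)$, which contains $L_{x\cdot y}$, $L_xL_y$, $L_x\otimes L_y$, $L_y\otimes L_x$ and $L_xL_y$ on the second factor.
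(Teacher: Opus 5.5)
Your proposal is correct and follows the paper's proof essentially step by step. You pair \eqref{beta-condition} with $x\otimes y$, rewrite the resulting terms as pairings of $\xi\otimes\eta$ with $\alpha(x\cdot_\g y)$, $\alpha(x)$, $\alpha(y)$ and their flips, substitute the coboundary form of $\alpha$, and collect everything into $-[P_-(x\cdot_\g y)+P_+(x)P_-(y)](r-\tau(r))$. In the bookkeeping, the mixed terms $L_x\otimes R_y$ and $R_y\otimes L_x$ cancel in pairs, and the remaining right-multiplication terms vanish by the single identity $R_{x\cdot_\g y}+L_xR_y+R_yR_x+R_yL_x=0$ (the regular-representation case of \eqref{mock-pre-rep}), so the Jacobi identity is not actually needed.
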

\begin{proof}
  Assume that $\beta:\g^*\rightarrow\g^*\otimes\g^*$ satisfies the compatibility condition, for all $x,y\in \g, \xi,\eta\in \g^*,$ we have
  \begin{eqnarray*}
&&\langle \beta\{\xi,\eta\}_{\g^*}+({\Id}\otimes (\huaR_{\eta}+\huaL_{\eta})+\huaL_{\eta}\otimes {\Id})\beta(\xi)+({\Id}\otimes (\huaL_{\xi}+\huaR_{\xi})+\huaL_{\xi}\otimes {\Id})\beta(\eta),x\otimes y\rangle\\
&=&\langle \{\xi,\eta\}_{\g^*},x\cdot_{\g}y\rangle-\langle \xi,x\cdot_{\g} \huaR^*_{\eta}y\rangle-\langle \xi,x\cdot_{\g} \huaL^*_{\eta}y\rangle-\langle \xi, \huaL^*_{\eta}x\cdot_{\g}y\rangle\\
&&-\langle \eta, x\cdot_{\g}\huaL^*_{\xi}y\rangle-\langle \eta, x\cdot_{\g}\huaR^*_{\xi}y\rangle-\langle \eta,\huaL^*_{\xi}x \cdot_{\g}y\rangle\\
&=&\langle \xi\otimes \eta,\alpha(x\cdot_{\g}y)+\tau(\alpha(x\cdot_{\g}y))+(L_{x}\otimes{\Id})\alpha(y)+(L_x\otimes {\Id})\tau(\alpha(y))\\
&&+(R_{y}\otimes {\Id})\tau(\alpha(x))+({\Id}\otimes L_{x})\alpha(y)+({\Id}\otimes L_{x})\tau(\alpha(y))+({\Id}\otimes R_{y})\alpha(x)\rangle\\
&=&\langle \xi\otimes \eta, -(L_{x\cdot_{\g}y}\otimes {\Id}-{\Id}\otimes R_{x\cdot_{\g}y}-{\Id}\otimes L_{x\cdot_{\g}y}+L_xL_y\otimes {\Id}-L_x\otimes R_y-L_x\otimes L_y\\
&&+L_y\otimes L_x-{\Id}\otimes L_xR_y-{\Id}\otimes L_xL_y+L_x\otimes R_y-{\Id}\otimes R_yR_x-{\Id}\otimes R_yL_x)r\rangle
\\
&&+\langle \xi\otimes \eta,-({\Id}\otimes L_{x\cdot_{\g}y}-R_{x\cdot_{\g}y}\otimes {\Id}-L_{x\cdot_{\g}y}\otimes {\Id}
+L_x\otimes L_y-L_xR_y\otimes{\Id} -L_xL_y\otimes{\Id}\\
&&+R_y\otimes L_x-R_yR_x\otimes{\Id}-R_yL_x\otimes{\Id}+{\Id}\otimes L_xL_y-R_y\otimes L_x-L_y\otimes L_x)\tau(r)\rangle\\
&=&-\langle \xi\otimes \eta, (L_{x\cdot_{\g}y}\otimes {\Id}-{\Id}\otimes L_{x\cdot_{\g}y}+L_{x}L_{y}\otimes {\Id}-{\Id}\otimes L_{x}L_{y}-L_{x}\otimes L_{y}+L_{y}\otimes L_{x})(r-\tau(r))\rangle\\
&=&-\langle \xi\otimes \eta,[P_{-}(x\cdot_{\g} y)+P_{+}(x)P_{-}(y)](r-\tau(r))\rangle\\
&=&0.
  \end{eqnarray*}
  Thus, the proof is completed.
\end{proof}

For any linear map $\alpha:\g\rightarrow \g\otimes \g$, let $J_\alpha:\g\rightarrow \g\otimes \g\otimes \g$ be a linear map given by
\begin{equation}\label{mock-pre-alpha}
J_{\alpha}(x)=(\alpha\otimes {\Id})\alpha(x)+({\Id}\otimes \alpha)\alpha(x)+(\tau\otimes {\Id})(\alpha\otimes {\Id})\alpha(x)+(\tau\otimes {\Id})({\Id}\otimes \alpha)\alpha(x),\quad \forall x\in \g.
\end{equation}

\begin{lem}
Let $\g$ be a vector space and $\alpha:\g
\rightarrow \g\otimes\g$ be a linear map. Then $\alpha^*:\g^*\otimes
\g^*\rightarrow \g^*$ defines a mock-pre-Lie algebra structure on
$\g^*$ if and only if $J_\alpha=0$.
\end{lem}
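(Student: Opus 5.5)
The plan is to dualize the defining identity \eqref{defi-mock-preLie-equation} term by term. Write $\xi\cdot_{\g^*}\eta:=\alpha^*(\xi\otimes\eta)$, so that $\langle x,\xi\cdot_{\g^*}\eta\rangle=\langle\alpha(x),\xi\otimes\eta\rangle$. The product $\cdot_{\g^*}$ is a mock-pre-Lie structure if and only if, for all $\xi,\eta,\zeta\in\g^*$,
\begin{equation*}
(\xi\cdot_{\g^*}\eta)\cdot_{\g^*}\zeta+\xi\cdot_{\g^*}(\eta\cdot_{\g^*}\zeta)+(\eta\cdot_{\g^*}\xi)\cdot_{\g^*}\zeta+\eta\cdot_{\g^*}(\xi\cdot_{\g^*}\zeta)=0 .
\end{equation*}
Since $\g$ is finite-dimensional, this holds if and only if its pairing with every $x\in\g$ vanishes. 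So I will pair each of the four terms with $x$ and rewrite each as $\langle T(x),\xi\otimes\eta\otimes\zeta\rangle$ for a suitable composite $T$ built from $\alpha$, ${\Id}$ and $\tau$.

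The key steps, in order, are the following.

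\emph{First term.} We have $\langle x,(\xi\cdot\eta)\cdot\zeta\rangle=\langle\alpha(x),(\xi\cdot\eta)\otimes\zeta\rangle=\langle(\alpha\otimes{\Id})\alpha(x),\xi\otimes\eta\otimes\zeta\rangle$.

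\emph{Second term.} Similarly, $\langle x,\xi\cdot(\eta\cdot\zeta)\rangle=\langle({\Id}\otimes\alpha)\alpha(x),\xi\otimes\eta\otimes\zeta\rangle$.

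\emph{Third and fourth terms.} These are the previous two expressions with $\xi$ and $\eta$ swapped. Using $\langle T,\eta\otimes\xi\otimes\zeta\rangle=\langle(\tau\otimes{\Id})T,\xi\otimes\eta\otimes\zeta\rangle$, they become the pairings of $(\tau\otimes{\Id})(\alpha\otimes{\Id})\alpha(x)$ and $(\tau\otimes{\Id})({\Id}\otimes\alpha)\alpha(x)$ with $\xi\otimes\eta\otimes\zeta$.

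\emph{Conclusion.} Summing the four terms, the pairing of the left-hand side above with $x$ equals $\langle J_\alpha(x),\xi\otimes\eta\otimes\zeta\rangle$, with $J_\alpha$ as in \eqref{mock-pre-alpha}. Because the tensors $\xi\otimes\eta\otimes\zeta$ span $(\g\otimes\g\otimes\g)^*$, this expression vanishes for all $x,\xi,\eta,\zeta$ if and only if $J_\alpha=0$. This gives both directions of the equivalence at once.

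The only point needing care is the third step. One must check that the swap $\xi\leftrightarrow\eta$ corresponds to applying $\tau$ on the first two tensor slots. It should not be applied on the slots produced by the inner $\alpha$ alone. This holds because in both $(\alpha\otimes{\Id})\alpha$ and $({\Id}\otimes\alpha)\alpha$ the first two legs of the resulting 3-tensor pair with $\xi$ and $\eta$ respectively. No step beyond this bookkeeping is expected to be an obstacle.
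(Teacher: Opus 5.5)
Your proposal is correct and takes essentially the same approach as the paper: the paper also pairs the four-term identity $(\xi\cdot\eta)\cdot\zeta+\xi\cdot(\eta\cdot\zeta)+(\eta\cdot\xi)\cdot\zeta+\eta\cdot(\xi\cdot\zeta)$ with $x\in\g$ and identifies it with $\langle \xi\otimes\eta\otimes\zeta, J_\alpha(x)\rangle$, where the swapped terms give the two $(\tau\otimes{\Id})$ summands. You simply spell out the term-by-term bookkeeping that the paper compresses into a single displayed computation.
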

\begin{proof}
  For all $x\in \g, \xi,\eta,\zeta\in \g^*,$ we have
  \begin{eqnarray*}
   &&\langle(\xi\cdot_{\g^*}\eta)\cdot_{\g^*}\zeta+\xi\cdot_{\g^*}(\eta\cdot_{\g^*}\zeta)
   +(\eta\cdot_{\g^*}\xi)\cdot_{\g^*}\zeta+\eta\cdot_{\g^*}(\xi\cdot_{\g^*}\zeta),x\rangle\\
   &=&\langle \xi\otimes \eta\otimes \zeta,(\alpha\otimes {\Id})\alpha(x)+({\Id}\otimes \alpha)\alpha(x)+(\tau\otimes {\Id})(\alpha\otimes {\Id})\alpha(x)+(\tau\otimes {\Id})({\Id}\otimes \alpha)\alpha(x)\rangle\\
  &=&\langle \xi\otimes \eta\otimes \zeta,J_{\alpha}(x)\rangle,
  \end{eqnarray*}
which implies that $(\g^*,\alpha^*=\cdot_{\g^*})$ is a mock-pre-Lie algebra if and only if $J_\alpha=0$.
\end{proof}

Let $(\g,\cdot)$ be a mock-pre-Lie algebra and $r=\sum_{i}a_i\otimes b_i\in \g\otimes \g$. Set
\begin{equation}
r_{12}=\sum_ia_i\otimes b_i\otimes 1;r_{21}=\sum_i b_i\otimes a_i\otimes 1;\;
r_{13}=\sum_{i}a_i\otimes 1\otimes b_i;\;r_{23}=\sum_i1\otimes a_i\otimes b_i,
\end{equation}
where $1$ is a unit element if $(\g,\cdot)$  is unital, or a symbol playing a similar role in the non-unital cases. The operation between two $r_{ij}$ are defined by
{\small\begin{align}
    &r_{12}\cdot r_{13}=\sum_{ij}a_i\cdot a_j\otimes b_i\otimes b_j,\ \ \{r_{13},r_{23}\}=\sum_{ij}a_i\otimes a_j\otimes \{b_i,b_j\}.
\end{align}}
All other cases are defined in the same way.
\begin{pro}\label{mock-r-matrix}
Let $(\g,\cdot_{\g})$ be a mock-pre-Lie algebra. Let $r=\sum_{i}a_i\otimes b_i\in \g\otimes \g$, where
$a_i,b_i\in \g$. If $\alpha:\g\rightarrow \g\otimes \g$ is defined by \eqref{coboundary}, then we have
\begin{equation}\label{eqn4}
J_\alpha(x)=Q(x)[[r,r]]+\sum_{j}[P_{-}(x\cdot_{\g} a_j)+P_{+}(x)P_{-}(a_j)](r-\tau(r))\otimes b_j,
\end{equation}
where
\begin{equation}\label{eqno 5.5}
[[r,r]]=-r_{13}\cdot r_{12}-r_{23}\cdot r_{21}+\{r_{23},r_{12}\}+\{r_{13},r_{21}\}-\{r_{13},r_{23}\}
\end{equation}
and $Q(x)=L_x\otimes {\Id}\otimes {\Id}+{\Id}\otimes L_x\otimes {\Id}+{\Id}\otimes {\Id}\otimes R_x+{\Id}\otimes{\Id}\otimes L_x$,
$P_{\pm}(x)=L_x\otimes {\Id}\pm{\Id}\otimes L_x$ for any $x\in \g$.
\end{pro}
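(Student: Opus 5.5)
This is a direct computation: expand $J_\alpha(x)$ in terms of $r=\sum_i a_i\otimes b_i$ and sort the resulting terms into the two summands of \eqref{eqn4}. I would keep everything at the tensor level rather than pairing with $\xi\otimes\eta\otimes\zeta\in(\g^*)^{\otimes 3}$, since the target formula is stated in $\g^{\otimes 3}$.

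First, write
$$\alpha(x)=\sum_i\big(a_i\otimes \{x,b_i\}_c-x\cdot_{\g} a_i\otimes b_i\big),$$
using that ${\Id}\otimes R_x+{\Id}\otimes L_x$ acts on the second factor by $\{x,\cdot\}_c$. Then apply $\alpha$ a second time, in each of the two slots, to get the four terms of \eqref{mock-pre-alpha}.

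\begin{itemize}
\item $(\alpha\otimes{\Id})\alpha(x)$ applies $\alpha$ to the first factor. This gives terms of the form $a_j\otimes\{a_i,b_j\}_c\otimes\{x,b_i\}_c$ and $a_i\cdot a_j\otimes b_j\otimes\{x,b_i\}_c$, together with analogous terms in which $x\cdot a_i$ replaces $a_i$.
\item $({\Id}\otimes\alpha)\alpha(x)$ applies $\alpha$ to the second factor $\{x,b_i\}_c$ or $b_i$.
\item The two $\tau\otimes{\Id}$ terms are the same expressions with the first two legs swapped.
\end{itemize}

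Second, sort the resulting roughly sixteen families of terms into two groups.

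\begin{itemize}
\item \emph{Group (a):} terms in which $x$ acts as an outer operator on an expression quadratic in $r$ that does not involve $x$. These should assemble into $Q(x)$ applied to the five pieces $-r_{13}\cdot r_{12}$, $-r_{23}\cdot r_{21}$, $\{r_{23},r_{12}\}$, $\{r_{13},r_{21}\}$, $-\{r_{13},r_{23}\}$ of \eqref{eqno 5.5}. Here $Q(x)$ acts by $L_x$ on legs 1 and 2 and by $\{x,\cdot\}_c=R_x+L_x$ on leg 3.
\item \emph{Group (b):} terms in which $x$ sits inside a product with $a_j$, or in which $x$ acts on leg 1 or 2 in a way not captured by $Q(x)$. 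To move $x$ outward I would use two identities: the mock-pre-Lie identity \eqref{defi-mock-preLie-equation}, in the form $L_{x\cdot y}+L_{y\cdot x}=-L_xL_y-L_yL_x$, and the mock-Lie Jacobi identity for $\{\cdot,\cdot\}_c$. After this, the terms that remain in group (b) should have third leg $b_j$ and organize as $[P_-(x\cdot a_j)+P_+(x)P_-(a_j)]$ acting on the legs-1,2 tensor. They pair antisymmetrically into $r-\tau(r)$, exactly as the analogous combination arose in the proof of the preceding proposition on the compatibility condition \eqref{beta-condition}.
\end{itemize}

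The main obstacle is bookkeeping: tracking signs and deciding which $\tau$-swapped term pairs with which. My strategy is to work backwards. I would expand the claimed right-hand side, $Q(x)[[r,r]]$ plus the correction term, into the same elementary families of tensors, then match them one by one against the expansion of $J_\alpha(x)$. Wherever a direct match fails, I would invoke \eqref{defi-mock-preLie-equation} on the relevant legs.

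A useful check is the special case $r=\tau(r)$. There the correction term vanishes, so the identity reduces to $J_\alpha(x)=Q(x)[[r,r]]$. Verifying this case first fixes the signs in $[[r,r]]$. The general case then only requires showing that the leftover terms, which come in pairs $r$ and $\tau(r)$, cancel into the $(r-\tau(r))\otimes b_j$ form.
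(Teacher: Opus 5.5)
Your plan is correct and is essentially the paper's own proof. Both expand $J_\alpha(x)$ directly at the tensor level and recombine terms using the mock-pre-Lie identity together with the Jacobi identity for $\{\cdot,\cdot\}_c$ that it implies. That Jacobi identity turns the third-leg terms $a_i\otimes a_j\otimes\{\{x,b_i\}_c,b_j\}_c$ of $({\Id}\otimes\alpha)\alpha(x)$, plus their $(\tau\otimes{\Id})$-flips, into ${\Id}\otimes{\Id}\otimes(R_x+L_x)$ applied to $-\{r_{13},r_{23}\}$.

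One point to keep in mind, as in the paper: most of the terms in which $Q(x)$ acts by $L_x$ on legs 1 and 2 of the first four summands of $[[r,r]]$ do not occur in $J_\alpha(x)$, so they must be added and subtracted. The subtracted copies then combine with the leftover terms of $J_\alpha(x)$ into $\sum_j[P_{-}(x\cdot_{\g}a_j)+P_{+}(x)P_{-}(a_j)](r-\tau(r))\otimes b_j$. What remains are terms built from $(x\cdot_{\g}b_i)\cdot_{\g}a_j+(b_i\cdot_{\g}x)\cdot_{\g}a_j+x\cdot_{\g}(b_i\cdot_{\g}a_j)+b_i\cdot_{\g}(x\cdot_{\g}a_j)$, which vanishes by the mock-pre-Lie identity.
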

\begin{proof}
By \eqref{coboundary} and \eqref{mock-pre-alpha}, for all $x\in \g,$ we have
\begin{eqnarray*}
J_\alpha(x)&=&(\alpha\otimes {\Id})\alpha(x)+({\Id}\otimes \alpha)\alpha(x)+(\tau\otimes {\Id})(\alpha\otimes {\Id})\alpha(x)+(\tau\otimes {\Id})({\Id}\otimes \alpha)\alpha(x)\\
&=&\sum_{i,j}(x\cdot_{\g}a_i)\cdot_{\g}a_j\otimes b_j\otimes b_i\uwave{-a_j\otimes b_j\cdot_{\g}(x\cdot_{\g}a_i)\otimes b_i}-a_j\otimes(x\cdot_{\g}a_i)\cdot_{\g}b_j\otimes b_i\\
&&-a_i\cdot_{\g}a_j\otimes b_j\otimes b_i\cdot_{\g}x+a_j\otimes b_j\cdot_{\g}a_i\otimes b_i\cdot_{\g}x+a_j\otimes a_i\cdot_{\g}b_j\otimes b_i\cdot_{\g}x\\
&&-a_i\cdot_{\g}a_j\otimes b_j\otimes x\cdot_{\g}b_i+a_j\otimes b_j\cdot_{\g}a_i\otimes x\cdot_{\g}b_i+a_j\otimes a_i\cdot_{\g}b_j\otimes x\cdot_{\g}b_i\\
&&+(x\cdot_{\g}a_i)\otimes b_j\cdot_{\g}a_j\otimes b_j-x\cdot_{\g}a_i\otimes a_j\otimes b_j\cdot_{\g}b_i-x\cdot_{\g}a_i\otimes a_j\otimes b_i\cdot_{\g}b_j\\
&&\uwave{-a_i\otimes (b_i\cdot_{\g}x)\cdot_{\g}a_j\otimes b_j}+\dashuline{a_i\otimes a_j\otimes b_j\cdot_{\g}(b_i\cdot_{\g}x)}+\uline{a_i\otimes a_j\otimes
(b_i\cdot_{\g}x)\cdot_{\g} b_j}\\
&&\uwave{-a_i\otimes (x\cdot_{\g}b_i)\cdot_{\g}a_j\otimes b_j}+\uuline{a_i\otimes a_j\otimes b_j\cdot_{\g}(x\cdot_{\g}b_i)}+\uline{a_i\otimes a_j\otimes (x\cdot_{\g}b_i)\cdot_{\g}b_j}\\
&&+b_j\otimes (x\cdot_{\g}a_i)\cdot_{\g}a_j\otimes b_i\dotuline{-b_j\cdot_{\g}(x\cdot_{\g}a_i)\otimes a_j\otimes b_i}-(x\cdot_{\g}a_i)\cdot_{\g}b_j\otimes a_j\otimes b_i\\
&&-b_j\otimes a_i\cdot_{\g}a_j\otimes b_i\cdot_{\g}x+b_j\cdot_{\g}a_i\otimes a_j\otimes b_i\cdot_{\g}x+a_i\cdot_{\g}b_j\otimes a_j\otimes b_i\cdot_{\g}x\\
&&-b_j\otimes a_i\cdot_{\g}a_j\otimes x\cdot_{\g}b_i+b_j\cdot_{\g}a_i\otimes a_j\otimes x\cdot_{\g}b_i+a_i\cdot_{\g}b_j\otimes a_j\otimes x\cdot_{\g}b_i\\
&&+b_j\cdot_{\g}a_j\otimes x\cdot_{\g}a_i\otimes b_j-a_j\otimes x\cdot_{\g}a_i\otimes b_j\cdot_{\g}b_i-a_j\otimes x\cdot_{\g}a_i\otimes b_i\cdot_{\g}b_j\\
&&\dotuline{-(b_i\cdot_{\g}x)\cdot_{\g}a_j\otimes a_i\otimes b_j} +\dashuline{a_j\otimes a_i\otimes b_j\cdot_{\g}(b_i\cdot_{\g}x)}+\uuline{a_j\otimes a_i\otimes
(b_i\cdot_{\g}x)\cdot_{\g} b_j}\\
&&\dotuline{-(x\cdot_{\g}b_i)\cdot_{\g}a_j\otimes a_i \otimes b_j}+\uline{a_j\otimes a_i\otimes b_j\cdot_{\g}(x\cdot_{\g}b_i)}+\uuline{a_j\otimes a_i\otimes (x\cdot_{\g}b_i)\cdot_{\g}b_j}\\
&=&\sum_{i,j}\Big(-a_i\cdot_{\g}a_j\otimes b_j\otimes b_i\cdot_{\g}x+a_j\otimes b_j\cdot_{\g}a_i\otimes b_i\cdot_{\g}x+a_j\otimes a_i\cdot_{\g}b_j\otimes b_i\cdot_{\g}x\\
&&-a_i\cdot_{\g}a_j\otimes b_j\otimes x\cdot_{\g}b_i+a_j\otimes b_j\cdot_{\g}a_i\otimes x\cdot_{\g}b_i+a_j\otimes a_i\cdot_{\g}b_j\otimes x\cdot_{\g}b_i\\
&&+(x\cdot_{\g}a_i)\otimes b_j\cdot_{\g}a_j\otimes b_j-x\cdot_{\g}a_i\otimes a_j\otimes b_j\cdot_{\g}b_i-x\cdot_{\g}a_i\otimes a_j\otimes b_i\cdot_{\g}b_j\\
&&-b_j\otimes a_i\cdot_{\g}a_j\otimes b_i\cdot_{\g}x+b_j\cdot_{\g}a_i\otimes a_j\otimes b_i\cdot_{\g}x+a_i\cdot_{\g}b_j\otimes a_j\otimes b_i\cdot_{\g}x\\
&&-b_j\otimes a_i\cdot_{\g}a_j\otimes x\cdot_{\g}b_i+b_j\cdot_{\g}a_i\otimes a_j\otimes x\cdot_{\g}b_i+a_i\cdot_{\g}b_j\otimes a_j\otimes x\cdot_{\g}b_i\\
&&+b_j\cdot_{\g}a_j\otimes x\cdot_{\g}a_i\otimes b_j-a_j\otimes x\cdot_{\g}a_i\otimes b_j\cdot_{\g}b_i-a_j\otimes x\cdot_{\g}a_i\otimes b_i\cdot_{\g}b_j\\
&&-x\cdot_{\g}(a_j\cdot_{\g}a_i)\otimes b_i\otimes b_j+x\cdot_{\g}(a_j\cdot_{\g}b_i)\otimes a_i\otimes b_j+(x\cdot_{\g}a_i)\otimes(a_j\cdot_{\g}b_i)\otimes b_j\\
&&-(x\cdot_{\g}b_i)\otimes(a_j\cdot_{\g}a_i)\otimes b_j-(a_j\cdot_{\g}a_i)\otimes(x\cdot_{\g}b_i)\otimes b_j
+(a_j\cdot_{\g}b_i)\otimes(x\cdot_{\g}a_i)\otimes b_j\\
&&+a_i\otimes x\cdot_{\g}(a_j\cdot_{\g}b_i)\otimes b_j-b_i\otimes x\cdot_{\g}(a_j\cdot_{\g}a_i)\otimes b_j+\uwave{a_j\otimes x\cdot_{\g}(b_j\cdot_{\g}a_i)\otimes b_i}\\
&&+\dotuline{x\cdot_{\g}(b_i\cdot_{\g} a_j)\otimes a_i\otimes b_j}\uline{-a_i\otimes a_j\otimes x\cdot_{\g}(b_i\cdot_{\g} b_j)}\uuline{-a_i\otimes a_j\otimes x\cdot_{\g}(b_j\cdot_{\g} b_i)}\\
&&\dashuline{-a_i\otimes a_j\otimes (b_j\cdot_{\g}b_i)\cdot_{\g}x-a_i\otimes a_j\otimes (b_i\cdot_{\g} b_j)\cdot_{\g}x}\Big)\\
&&+\sum_{i,j}\Big((x\cdot_{\g}a_j)\cdot_{\g}a_i\otimes b_i\otimes b_j-a_i\otimes(x\cdot_{\g}a_j)\cdot_{\g}b_i\otimes b_j+b_i\otimes (x\cdot_{\g}a_j)\cdot_{\g}a_i\otimes b_j\\
&&-(x\cdot_{\g}a_j)\cdot_{\g}b_i\otimes a_i\otimes b_j+x\cdot_{\g}(a_j\cdot_{\g}a_i)\otimes b_i\otimes b_j-x\cdot_{\g}(a_j\cdot_{\g}b_i)\otimes a_i\otimes b_j\\
&&-(x\cdot_{\g}a_i)\otimes(a_j\cdot_{\g}b_i)\otimes b_j+(x\cdot_{\g}b_i)\otimes(a_j\cdot_{\g}a_i)\otimes b_j+(a_j\cdot_{\g}a_i)\otimes(x\cdot_{\g}b_i)\otimes b_j\\
&&-(a_j\cdot_{\g}b_i)\otimes(x\cdot_{\g}a_i)\otimes b_j-a_i\otimes x\cdot_{\g}(a_j\cdot_{\g}b_i)\otimes b_j+
b_i\otimes x\cdot_{\g}(a_j\cdot_{\g}a_i)\otimes b_j\Big)\\
&=&\Big(L_x\otimes {\Id}\otimes {\Id}+{\Id}\otimes L_x\otimes {\Id}+{\Id}\otimes {\Id}\otimes R_x+{\Id}\otimes{\Id}\otimes L_x\Big)\Big(-r_{13}\cdot r_{12}-r_{23}\cdot r_{21}\\
&&+\{r_{23},r_{12}\}+\{r_{13},r_{21}\}-\{r_{13},r_{23}\}\Big)+\sum_{j}\Big(L_{x\cdot_{\g}a_j}\otimes {\Id}-{\Id}\otimes L_{x\cdot_{\g}a_j}+L_{x}L_{a_j}\otimes {\Id}\\
&&-{\Id}\otimes L_{x}L_{a_j}-L_{x}\otimes L_{a_j}+L_{a_j}\otimes L_{x}\Big)(r-\tau(r))\otimes b_j\\
&=&Q(x)[[r,r]]+\sum_{j}[P_{-}(x\cdot_{\g} a_j)+P_{+}(x)P_{-}(a_j)](r-\tau(r))\otimes b_j.
\end{eqnarray*}
Therefore, we finished the proof.
\end{proof}

With the discussion above, we have the following result.

\begin{thm}\label{mock-preLiebia-condition}
Let $\g$ be a mock-pre-Lie algebra and $r\in\g\otimes \g$. Then the map $\alpha$ defined by equation \eqref{coboundary} induces
a mock-pre-Lie algebra structure on $\g^*$ such that $(\g,\g^*)$ is a
mock-pre-Lie bialgebra if and only if the following two conditions
are satisfied:
\begin{itemize}
\item[{\rm(i)}] $[P_{-}(x\cdot_{\g} y)+P_{+}(x)P_{-}(y)](r-\tau(r))=0$ \quad $\forall x,y\in \g$;
\item[{\rm(ii)}]$Q(x)[[r,r]]=0$,
\end{itemize}
\noindent where for all $x\in \g,$ $P_{\pm}(x),Q(x)$ and $[[r,r]]$ are defined by Proposition \ref{mock-r-matrix}.
\end{thm}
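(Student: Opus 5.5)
The plan is to assemble the theorem from three ingredients already in the paper: the observation after the definition of coboundary bialgebras, the proposition characterizing \eqref{beta-condition}, and Proposition \ref{mock-r-matrix} together with the lemma saying $\alpha^*$ is a mock-pre-Lie product if and only if $J_\alpha=0$. By the remark following \eqref{coboundary}, when $\alpha$ is given by \eqref{coboundary}, the compatibility condition \eqref{alpha-condition} holds automatically. Hence $(\g,\g^*,\alpha,\beta)$ is a mock-pre-Lie bialgebra exactly when two things hold: $\alpha^*$ is a mock-pre-Lie structure on $\g^*$, and $\beta$ satisfies \eqref{beta-condition}. So the theorem reduces to identifying these two requirements with (i) and (ii).

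For the forward direction, assume $(\g,\g^*)$ is a mock-pre-Lie bialgebra with $\alpha$ given by \eqref{coboundary}. Since $\alpha^*$ is a mock-pre-Lie product, the proposition characterizing \eqref{beta-condition} in the coboundary setting applies. It turns \eqref{beta-condition} into exactly condition (i). The lemma on $J_\alpha$ gives $J_\alpha=0$. Substituting this into \eqref{eqn4} of Proposition \ref{mock-r-matrix}, the second summand is
$$\sum_j[P_{-}(x\cdot_\g a_j)+P_{+}(x)P_{-}(a_j)](r-\tau(r))\otimes b_j,$$
and it vanishes term by term by (i) with $y=a_j$. What remains is $Q(x)[[r,r]]=0$, which is (ii).

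For the converse, assume (i) and (ii). Then both summands of \eqref{eqn4} vanish: the first by (ii), the second by (i) applied with $y=a_j$. So $J_\alpha=0$, and the lemma shows $\alpha^*$ is a mock-pre-Lie structure on $\g^*$. Having this, the characterization proposition converts (i) back into \eqref{beta-condition}, and \eqref{alpha-condition} holds automatically, so $(\g,\g^*)$ is a mock-pre-Lie bialgebra.

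The main obstacle is a logical point rather than a computation. The characterization of \eqref{beta-condition} via (i) is stated under the hypothesis that $\alpha^*$ is already a mock-pre-Lie product. In the converse one must therefore first establish $J_\alpha=0$ from (i) and (ii) before invoking it. Checking its proof, though, the pairing computation only uses the mock-pre-Lie structure on $\g$ and the definition \eqref{coboundary}, so the equivalence of \eqref{beta-condition} with (i) holds regardless; it is worth noting this explicitly.
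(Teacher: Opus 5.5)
Your proposal is correct and follows the paper's route exactly. The paper obtains this theorem directly from the preceding discussion: the automatic validity of \eqref{alpha-condition} for coboundary $\alpha$, the proposition identifying \eqref{beta-condition} with (i), the lemma that $\alpha^*$ is mock-pre-Lie if and only if $J_\alpha=0$, and formula \eqref{eqn4}. Your remark that the equivalence of \eqref{beta-condition} with (i) never uses the mock-pre-Lie identity on $\g^*$ is accurate, since only \eqref{coboundary} and the identity on $\g$ enter the pairing computation; in any case your converse already establishes $J_\alpha=0$ before invoking that proposition, so the remark is not needed.
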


The equation $[[r,r]]=-r_{13}\cdot r_{12}-r_{23}\cdot r_{21}+\{r_{23},r_{12}\}+\{r_{13},r_{21}\}-\{r_{13},r_{23}\}=0$ is called the {\bf mock-pre-Lie classical Yang-Baxter equation}, as an analogue of the classical Yang-Baxter equation.

\section{Quasi-triangular mock-pre-Lie bialgebras and relative Rota-Baxter operators}\label{sec:III}
In this section, we introduce the notions of quasi-triangular mock-pre-Lie algebras and relative Rota-Baxter operators of arbitrary weight on mock-pre-Lie algebras. We show that a quasi-triangular  mock-pre-Lie algebra can give rise to a relative Rota-Baxter operator of weight $-1.$

Let $r\in \g\otimes\g,$ we define $r_{+},r_{-}:\g^*\rightarrow\g$ by
\begin{eqnarray}
\langle r_{+}(\xi),\eta\rangle=r(\xi,\eta),\quad \langle r_{-}(\xi),\eta\rangle=r(\eta,\xi),\quad \forall \xi,\eta \in \g^*.
\end{eqnarray}
 Explicitly, writing $r=\sum_{i}a_i\otimes b_i\in \g\otimes \g$, the corresponding map $r_{+}$ is
\begin{eqnarray}\label{basis-ccondition}
r_{+}(\xi)=\sum_{i}\langle\xi,a_i\rangle b_i,\quad \forall \xi \in \g^*.
\end{eqnarray}
Then the mock-pre-Lie algebra structure on $\g^*$ defined by \eqref{coboundary} in Theorem \ref{mock-preLiebia-condition} is given by
\begin{eqnarray}\label{dual-space-mockLie}
 \xi\cdot_{r} \eta=-(L^*_{r_{+}(\xi)}+R^*_{r_{+}(\xi)})\eta+R^*_{r_{-}(\eta)}\xi,  \quad \forall \xi,\eta \in \g^*.
\end{eqnarray}
 We will denote it by $(\g^*,\cdot_{r})$, or simply by $\g^*_{r}.$

Now we introduce the notion of $(L,L+R)$-invariance of $r\in \g\otimes\g$ , which is the main ingredient in the definition
of a quasi-triangular mock-pre-Lie bialgebra.
\begin{defi}
  Let $(\g, \cdot_{\g})$ be a mock-pre-Lie algebra. An element $r\in \g\otimes\g$ is called {\bf $(L,L+R)$-invariant} if
\begin{eqnarray}
\Big(L_{x}\otimes {\Id}-{\Id}\otimes (L_{x}+R_{x})\Big)r=0,\quad \forall x\in \g.
\end{eqnarray}
\end{defi}

\begin{lem}\label{mock-pre-invariant}
  Let $(\g, \cdot_{\g})$ be a mock-pre-Lie algebra and $r\in \g\otimes\g.$ Then $r$ is $(L,L+R)$-invariant if and only if
  \begin{eqnarray}
    r_{+}(L^*_x\xi)+\{x,r_{+}(\xi)\}_{c}=0.\quad \forall x\in \g,\xi\in\g^*.
  \end{eqnarray}
\begin{proof}
  For all $x\in \g,\xi,\eta \in \g^*,$ we have
  \begin{eqnarray}
      \langle r_{+}(L^*_x\xi)+\{x,r_{+}(\xi)\}_{c},\eta\rangle&=&\langle r,L^*_{x}\xi\otimes \eta\rangle+\langle L_x(r_{+}(\xi))+R_x(r_{+}(\xi)),\eta\rangle\\
   \nonumber&=&\langle r,L^*_{x}\xi\otimes \eta\rangle-\langle r,\xi\otimes(L^*_{x}+R^*_{x})\eta\rangle\\
   \nonumber&=&-\langle\Big(L_x\otimes{\Id}-{\Id}\otimes(L_x+R_x)\Big)r,\xi\otimes\eta\rangle,
  \end{eqnarray}
  which implies that $r$ is $(L,L+R)$-invariant if and only if $r_{+}(L^*_x\xi)+\{x,r_{+}(\xi)\}_{c}=0.$
\end{proof}
 \end{lem}

Next, for convenience in the subsequent discussion, we write $r=r_a+r_s,$ where $r_a$ denotes the skew-symmetric part of $r$
 and $r_s$ denotes the symmetric part of $r.$

\begin{pro}\label{deduce-condition}
  Let $(\g, \cdot_{\g})$ be a mock-pre-Lie algebra and $r\in \g\otimes\g.$ If the skew-symmetric part $r_{a}$ of $r$ is $(L,L+R)$-invariant, then we have $[P_{-}(x\cdot_{\g} y)+P_{+}(x)P_{-}(y)](r_{a})=0$, for all $x,y\in \g.$
\end{pro}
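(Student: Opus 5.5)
The plan is to turn the $(L,L+R)$-invariance of $r_a$, together with its skew-symmetry, into a rule that moves every operator onto the second tensor factor. The whole expression $[P_{-}(x\cdot_{\g} y)+P_{+}(x)P_{-}(y)](r_a)$ then becomes $({\Id}\otimes \Phi_{x,y})r_a$ for a single operator $\Phi_{x,y}\in\gl(\g)$. Finally I will check that $\Phi_{x,y}=0$ using only the defining identity \eqref{defi-mock-preLie-equation}.

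\emph{Step 1 (two forms of invariance).} By hypothesis,
\begin{equation*}
(L_x\otimes {\Id})r_a=({\Id}\otimes (L_x+R_x))r_a \quad\text{for all } x\in\g .
\end{equation*}
Rearranging gives
\begin{equation*}
P_{-}(x)r_a=(L_x\otimes{\Id}-{\Id}\otimes L_x)r_a=({\Id}\otimes R_x)r_a .
\end{equation*}
This is the key substitution. (Applying $\tau$ and using $\tau(r_a)=-r_a$ gives the mirror identity $((L_x+R_x)\otimes{\Id})r_a=({\Id}\otimes L_x)r_a$, which I do not expect to need.)

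\emph{Step 2 (rewrite each term).} First, $P_{-}(x\cdot_{\g} y)r_a=({\Id}\otimes R_{x\cdot_{\g} y})r_a$. Next, $P_{+}(x)P_{-}(y)r_a=P_{+}(x)({\Id}\otimes R_y)r_a$, which splits as
\begin{equation*}
({\Id}\otimes R_y)(L_x\otimes{\Id})r_a+({\Id}\otimes L_xR_y)r_a ,
\end{equation*}
because $L_x\otimes{\Id}$ commutes with ${\Id}\otimes R_y$. Applying the invariance once more to $(L_x\otimes{\Id})r_a$ yields
\begin{equation*}
P_{+}(x)P_{-}(y)r_a=\big({\Id}\otimes (R_yL_x+R_yR_x+L_xR_y)\big)r_a .
\end{equation*}
Hence the expression equals $({\Id}\otimes \Phi_{x,y})r_a$ with
\begin{equation*}
\Phi_{x,y}=R_{x\cdot_{\g} y}+R_yL_x+R_yR_x+L_xR_y .
\end{equation*}

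\emph{Step 3 (vanishing).} Evaluating on $z\in\g$ gives
\begin{equation*}
\Phi_{x,y}(z)=z\cdot_{\g}(x\cdot_{\g} y)+(x\cdot_{\g} z)\cdot_{\g} y+(z\cdot_{\g} x)\cdot_{\g} y+x\cdot_{\g}(z\cdot_{\g} y).
\end{equation*}
This is exactly $Aass(x,z,y)+Aass(z,x,y)$. It vanishes by the skew-symmetry of the anti-associator in its first two arguments, i.e. \eqref{defi-mock-preLie-equation} with $(x,y,z)$ replaced by $(x,z,y)$. Therefore $[P_{-}(x\cdot_{\g} y)+P_{+}(x)P_{-}(y)](r_a)=0$.

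The only delicate point is Step 2: one must make sure that each substitution of invariance is applied to $r_a$ itself, and not to an element already acted on by another operator. This is why the commutation of $L_x\otimes{\Id}$ with ${\Id}\otimes R_y$ is used before invoking invariance the second time. Skew-symmetry of $r_a$ is not needed in this argument; only the invariance identity is used.
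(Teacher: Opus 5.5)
Your proof is correct and is essentially the paper's argument. The paper pairs with $\xi\otimes\eta$ and uses the dual form of invariance from Lemma \ref{mock-pre-invariant}, $(r_a)_{+}(L^*_x\xi)=-\{x,(r_a)_{+}(\xi)\}_{c}$, to move everything onto one slot; this yields an expression that simplifies to $\langle \Phi_{x,y}((r_a)_{+}(\xi)),\eta\rangle$ with your operator $\Phi_{x,y}$, and that expression vanishes by \eqref{defi-mock-preLie-equation}. Your tensor-level version is slightly more direct, and you are right that skew-symmetry of $r_a$ is never used; the paper's computation does not use it either.
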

\begin{proof}
  Since the skew-symmetric part $r_{a}$ of $r$ is $(L,L+R)$-invariant, by Lemma \ref{mock-pre-invariant} and \eqref{defi-mock-preLie-equation}, we have
  \begin{eqnarray*}
    &&[P_{-}(x\cdot_{\g} y)+P_{+}(x)P_{-}(y)](r_a)(\xi,\eta)\\
      &=&\Big(L_{x\cdot_{\g}y}\otimes {\Id}-{\Id}\otimes L_{x\cdot_{\g}y}+L_{x}L_{y}\otimes {\Id}-{\Id}\otimes L_{x}L_{y}-L_{x}\otimes L_{y}+L_{y}\otimes L_{x}\Big)(r_a)(\xi,\eta)\\
  &=&-r_a(L^*_{x\cdot_{\g}y}\xi,\eta)+r_a(\xi,L^*_{x\cdot_{\g}y}\eta)+r_a(L^*_{y}(L^*_{x}\xi),\eta)
  -r_a(\xi,L^*_{y}(L^*_{x}\eta))-r_a(L^*_{x}\xi,L^*_{y}\eta)+r_a(L^*_{y}\xi,L^*_{x}\eta)\\
 &=&-\langle (r_a)_{+}(L^*_{x\cdot_{\g}y}\xi),\eta\rangle+\langle (r_a)_{+}(\xi),L^*_{x\cdot_{\g}y}\eta \rangle+\langle (r_a)_{+}(L^*_{y}(L^*_{x}\xi),\eta\rangle\\
 &&-\langle (r_a)_{+}(\xi),L^*_{y}(L^*_{x}\eta)\rangle-\langle (r_a)_{+}(L^*_{x}\xi),L^*_{y}\eta \rangle
 +\langle (r_a)_{+}(L^*_{y}\xi),L^*_{x}\eta\rangle\\
  &=&\langle\{x\cdot_{\g}y,(r_a)_{+}(\xi)\}_{c}-(x\cdot_{\g}y)\cdot_{\g}(r_a)_{+}(\xi)
  +\{y,\{x,(r_a)_{+}(\xi)\}_{c}\}_{c}\\&&-x\cdot_{\g}(y\cdot_{\g}(r_a)_{+}(\xi))
  -y\cdot_{\g}\{x,(r_a)_{+}(\xi)\}_{c}+x\cdot_{\g}\{y,(r_a)_{+}(\xi)\}_{c},\eta\rangle\\
  &=&0,
  \end{eqnarray*}
 which implies that $[P_{-}(x\cdot_{\g} y)+P_{+}(x)P_{-}(y)](r_{a})=0.$
\end{proof}

\begin{defi}
  Let $(\g, \cdot)$ be a mock-pre-Lie algebra. If $r\in \g\otimes\g$ satisfies $[[r,r]]=0$ and the skew-symmetric part $r_a$ is $(L,L+R)$-invariant, then the mock-pre-Lie bialgebra $(\g,\g_{r}^*)$ induced by $r$ is called a {\bf quasi-triangular mock-pre-Lie bialgebra}. Moreover, if $r$ is symmetric, it is called a {\bf triangular mock-pre-Lie bialgebra}.
\end{defi}

Denote by $I$ the operator
\begin{eqnarray}\label{sign-I}
I=r_{+}-r_{-}:\g^{\ast}\longrightarrow \g.
\end{eqnarray}

Note that $I^{\ast}=-I$. Actually, $\frac{1}{2} I$ is the contraction of the skew-symmetric part of $r$.
If $r$ is symmetric, then $I=0$.
\begin{pro}\label{invariance2}
Let $(\g, \cdot_{\g})$ be a mock-pre-Lie algebra and $r\in \g\otimes
\g.$ The skew-symmetric part $r_a$ of $r$ is $(L,L+R)$-invariant if
and only if $I\circ L_x^{\ast}=-(L_x+R_x) \circ I$, or $ L_x\circ I =-I
\circ (L_x^*+R_x^*),$  for all $x\in \g$,  where $I$ is
given by \eqref{sign-I}.
\end{pro}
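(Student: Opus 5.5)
The plan is to reduce everything to Lemma \ref{mock-pre-invariant}, applied to $r_a$ instead of $r$, and then dualize.

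First I relate $I$ to $(r_a)_+$. Write $r=r_a+r_s$. Then $(r_s)_+=(r_s)_-$ and $(r_a)_-=-(r_a)_+$, so
\[ I=r_+-r_-=(r_a)_+-(r_a)_-=2(r_a)_+ . \]
This is the remark that $\frac12 I$ is the contraction of $r_a$.

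Next, Lemma \ref{mock-pre-invariant} applied to $r_a$ says that $r_a$ is $(L,L+R)$-invariant if and only if
\[ (r_a)_+(L^*_x\xi)+\{x,(r_a)_+(\xi)\}_c=0 \quad\text{for all } x\in\g,\ \xi\in\g^* . \]
Since $\{x,u\}_c=L_xu+R_xu$, this reads $(r_a)_+\circ L^*_x=-(L_x+R_x)\circ (r_a)_+$. Multiplying by $2$ gives $I\circ L_x^*=-(L_x+R_x)\circ I$, which is the first equivalent form.

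For the second form, I take the transpose of the first identity. We use $I^*=-I$ (viewing $I^*:\g^*\to\g^{**}=\g$), together with the transposes $(L_x^*)^*=-L_x$ and $(L_x+R_x)^*=-(L_x^*+R_x^*)$, where the minus signs come from the paper's convention for $L^*,R^*$. Transposing $I\circ L_x^*=-(L_x+R_x)\circ I$ gives
\[ (L_x^*)^*\circ I^*=-I^*\circ (L_x+R_x)^* , \]
that is,
\[ (-L_x)(-I)=-(-I)\bigl(-(L_x^*+R_x^*)\bigr) . \]
Simplifying, this is $L_x\circ I=-I\circ(L_x^*+R_x^*)$. Transposition is reversible, so the two forms are equivalent.

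The only delicate point is sign bookkeeping. I would check it by pairing both sides with $\xi\otimes\eta$: the quantity $\langle L_xI\xi,\eta\rangle=-\langle I\xi,L_x^*\eta\rangle=\langle \xi, I L_x^*\eta\rangle$ should be compared with $\langle -I(L_x^*+R_x^*)\xi,\eta\rangle=\langle (L_x^*+R_x^*)\xi, I\eta\rangle=-\langle \xi,(L_x+R_x)I\eta\rangle$. The two agree exactly when the first identity holds.
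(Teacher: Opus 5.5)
Your proposal is correct and follows the paper's proof: apply Lemma \ref{mock-pre-invariant} to $r_a$, use $I=2(r_a)_+$ to get $I\circ L_x^*=-(L_x+R_x)\circ I$, then transpose with $I^*=-I$ to get $L_x\circ I=-I\circ(L_x^*+R_x^*)$. Your pairing check with $\xi$ and $\eta$ spells out the transposition signs that the paper handles in a single line.
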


\begin{proof}
By Lemma \ref{mock-pre-invariant}, the skew-symmetric part $r_a$ of $r$ is $(L,L+R)$-invariant if
and only if \vspace{1mm} $(r_a)_{+}(L^*_x\xi)+\{x,(r_a)_{+}(\xi)\}_{c}=0.$ Using $\frac{1}{2} I=(r_a)_{+},$ we can deduce that \vspace{1mm}the skew-symmetric part $r_a$ is $(L,L+R)$-invariant if
and only if $I\circ L_x^{\ast}=-(L_x+R_x) \circ I.$ Moreover, by $I^{\ast}=-I,$ we have $ L_x\circ I =-I\circ (L_x^*+R_x^*).$
\end{proof}

\begin{lem}\label{skew-invariant-lem}
Let $(\g, \cdot_{\g})$ be a mock-pre-Lie algebra and $r\in \g\otimes
\g.$ If the skew-symmetric part $r_a$ of $r$ is $(L,L+R)$-invariant, then
\begin{eqnarray}\label{skew-invariant}
 \langle\xi,x\cdot_{\g} (r_a)_{+}(\eta)\rangle+\langle\eta,\{x,(r_a)_{+}(\xi)\}_{c}\rangle=0 ,\quad \forall x\in \g, \xi,\eta \in \g^*.
\end{eqnarray}
Furthermore, the mock-pre-Lie algebra multiplication $\cdot_{r}$ on $\g^*$  given by \eqref{dual-space-mockLie} reduces to
\begin{eqnarray}\label{mockLie-induced-by-sym}
 \xi\cdot_{r} \eta=-(L^*_{(r_s)_{+}(\xi)}+R^*_{(r_s)_{+}(\xi)})\eta+R^*_{(r_s)_{+}(\eta)}\xi,  \quad \forall \xi,\eta \in \g^*.
\end{eqnarray}
\end{lem}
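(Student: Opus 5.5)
The plan is to establish \eqref{skew-invariant} directly from the invariance characterization of Lemma \ref{mock-pre-invariant} combined with the skew-symmetry of $r_a$, and then to substitute this identity into \eqref{dual-space-mockLie} after splitting $r_+=(r_s)_++(r_a)_+$ and $r_-=(r_s)_+-(r_a)_+$. The second splitting follows because $r_-=r_+^*$, $(r_s)_+$ is symmetric, and $(r_a)_+$ is skew.

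For the first claim, I apply Lemma \ref{mock-pre-invariant} to $r_a$. This gives $(r_a)_+(L^*_x\eta)=-\{x,(r_a)_+(\eta)\}_c$. Pairing with $\xi$ and using $\langle\xi,(r_a)_+(\zeta)\rangle=-\langle\zeta,(r_a)_+(\xi)\rangle$, I get
\begin{eqnarray*}
\langle \xi,\{x,(r_a)_+(\eta)\}_c\rangle=-\langle\xi,(r_a)_+(L^*_x\eta)\rangle=\langle L^*_x\eta,(r_a)_+(\xi)\rangle=-\langle\eta,x\cdot_{\g}(r_a)_+(\xi)\rangle.
\end{eqnarray*}
After interchanging the names $\xi,\eta$, this is exactly \eqref{skew-invariant}. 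The only care needed is with signs and with which slot of $r$ is contracted.

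For the reduction, I compute the $r_a$-contribution to $\xi\cdot_r\eta$ and show it vanishes. That contribution is
\[
-(L^*_{(r_a)_+(\xi)}+R^*_{(r_a)_+(\xi)})\eta-R^*_{(r_a)_+(\eta)}\xi.
\]
Pairing it with an arbitrary $y\in\g$ gives
\[
\langle\eta,\{(r_a)_+(\xi),y\}_c\rangle+\langle\xi,y\cdot_{\g}(r_a)_+(\eta)\rangle.
\]
By \eqref{skew-invariant} with $x=y$, this equals $\langle\eta,\{(r_a)_+(\xi),y\}_c\rangle-\langle\eta,\{y,(r_a)_+(\xi)\}_c\rangle$, which is $0$ by commutativity of $\{\cdot,\cdot\}_c$. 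The remaining terms are precisely \eqref{mockLie-induced-by-sym}.

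The main obstacle is bookkeeping. I need to confirm that $r_-$ corresponds to $(r_s)_+-(r_a)_+$ under the paper's conventions for $r_\pm$. I also need to match the sign conventions $\langle R^*_x\xi,y\rangle=-\langle\xi,y\cdot x\rangle$ so that the cancellation lands on the symmetric bracket. If a sign comes out wrong, I would recheck via Proposition \ref{invariance2}, with $I=2(r_a)_+$.
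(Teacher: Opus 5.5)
Your proposal is correct and follows essentially the same route as the paper. You obtain \eqref{skew-invariant} from the invariance characterization in Lemma \ref{mock-pre-invariant} (which the paper cites via Proposition \ref{invariance2}) together with the skew-symmetry of $(r_a)_+$. You then get the reduction by writing $r_+=(r_s)_++(r_a)_+$ and $r_-=(r_s)_+-(r_a)_+$, and showing that the $r_a$-terms vanish after pairing with an arbitrary element of $\g$. The only differences are that you run the first computation in the opposite direction (start from the lemma, then rename $\xi,\eta$), and you state explicitly the commutativity of $\{\cdot,\cdot\}_c$, which the paper uses silently.
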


\begin{proof}
Since the skew-symmetric part $r_a$ of $r$ is $(L,L+R)$-invariant, by Proposition \ref{invariance2}, we have
  \begin{eqnarray*}
   \langle \xi,x\cdot_{\g} (r_a)_{+}(\eta)\rangle+\langle\eta,\{x,(r_a)_{+}(\xi)\}_{c}\rangle
    &=&-\langle L^*_{x}\xi,(r_a)_{+}(\eta) \rangle+\langle\eta,\{x,(r_a)_{+}(\xi)\}_{c}\rangle\\
     &=&\langle \eta, (r_a)_{+}(L^*_{x}\xi)+\{x,(r_a)_{+}(\xi)\}_{c}\rangle =0,
  \end{eqnarray*}
  which implies that \eqref{skew-invariant} holds.
  Moreover, by \eqref{skew-invariant}, we have
  \begin{eqnarray*}
\langle(L^*_{(r_a)_{+}(\xi)}+R^*_{(r_a)_{+}(\xi)})\eta+R^*_{(r_a)_{+}(\eta)}\xi,x\rangle=-( \langle\xi,x\cdot_{\g} (r_a)_{+}(\eta)\rangle+\langle\eta,\{x,(r_a)_{+}(\xi)\}_{c}\rangle)=0.
  \end{eqnarray*}

By \eqref{dual-space-mockLie}, we have
 \begin{eqnarray*}
   \xi\cdot_{r}\eta&=&-(L^*_{r_{+}(\xi)}+R^*_{r_{+}(\xi)})\eta+R^*_{r_{-}(\eta)}\xi\\
   &=&-(L^*_{(r_s)_{+}(\xi)}+R^*_{(r_s)_{+}(\xi)})\eta-(L^*_{(r_a)_{+}(\xi)}+R^*_{(r_a)_{+}(\xi)})\eta+R^*_{(r_s)_{+}(\eta)}\xi-R^*_{(r_a)_{+}(\eta)}\xi\\
   &=&-(L^*_{(r_s)_{+}(\xi)}+R^*_{(r_s)_{+}(\xi)})\eta+R^*_{(r_s)_{+}(\eta)}\xi.
 \end{eqnarray*}
The proof is complete.
\end{proof}

\begin{thm}\label{heart}
Let $(\g, \cdot_{\g})$ be a mock-pre-Lie algebra and $r\in \g\otimes \g$. Assume that the skew-symmetric part $r_{a}$ of $r$ is $(L,L+R)$-invariant. Then $r$ satisfies $[[r,r]]=0$ if and only if $(\g^{\ast},\cdot_{r})$ is a mock-pre-Lie algebra and
the linear maps $r_{+},r_{-}:\g^{*}\rightarrow \g$ are both mock-pre-Lie algebra homomorphisms from $(\g^{*},\cdot_{r})$ to $ (\g,\cdot_{\g})$.
\end{thm}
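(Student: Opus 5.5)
We will pair $[[r,r]]$ with $\xi\otimes\eta\otimes\zeta\in\g^*\otimes\g^*\otimes\g^*$ and rewrite each of the five terms through $r_+$ and $r_-$. Write $r=\sum_i a_i\otimes b_i$, so that $r_+(\xi)=\sum_i\langle\xi,a_i\rangle b_i$ and $r_-(\xi)=\sum_i\langle\xi,b_i\rangle a_i$. The goal is an identity of the form
\begin{equation*}
\langle [[r,r]],\xi\otimes\eta\otimes\zeta\rangle
=\pm\big\langle \zeta,\ r_+(\xi)\cdot_{\g} r_+(\eta)-r_+(\xi\cdot_r\eta)\big\rangle
\end{equation*}
(after a suitable relabeling of which slot carries $\zeta$), and, symmetrically, a second expression in terms of $r_-(\xi)\cdot_{\g}r_-(\eta)-r_-(\xi\cdot_r\eta)$.

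The first step is to compute $r_+(\xi\cdot_r\eta)$ from \eqref{dual-space-mockLie}. We have
\begin{equation*}
\langle r_+(\xi\cdot_r\eta),\zeta\rangle=\sum_i\langle \xi\cdot_r\eta,a_i\rangle\langle\zeta,b_i\rangle
=\sum_i\Big(\langle\eta,\{r_+(\xi),a_i\}_c\rangle-\langle\xi,a_i\cdot_{\g} r_-(\eta)\rangle\Big)\langle\zeta,b_i\rangle.
\end{equation*}
These sums are exactly the pairings of $\{r_{13},r_{21}\}$, $\{r_{13},r_{23}\}$, $r_{13}\cdot r_{12}$ and so on, once the legs are matched. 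The terms $\langle\zeta,r_+(\xi)\cdot r_+(\eta)\rangle$ correspond to one of the products $r_{13}\cdot r_{23}$-type. This matching should leave a residue consisting of terms involving $r_-$ where the formula uses $r_+$.

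The second step is to remove that residue using the hypothesis. Since $r_+-r_-=I=2(r_a)_+$, the residue is a combination of terms in which one factor is $I$. Proposition \ref{invariance2} ($I\circ L_x^*=-(L_x+R_x)\circ I$) and Lemma \ref{skew-invariant-lem}, in particular \eqref{skew-invariant}, should make these residues vanish, or cancel among themselves. Lemma \ref{skew-invariant-lem} also lets us replace $\cdot_r$ by its $r_s$-expression \eqref{mockLie-induced-by-sym}, which makes the statement symmetric in $r_+\leftrightarrow r_-$. Given the resulting identity, $[[r,r]]=0$ is equivalent to $r_+$ being a homomorphism $\g^*_r\to\g$, and also to $r_-$ being one by the parallel computation.

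The third step concerns the mock-pre-Lie property of $\cdot_r$. For the forward direction, $[[r,r]]=0$ gives $Q(x)[[r,r]]=0$, and Proposition \ref{deduce-condition} gives condition (i) of Theorem \ref{mock-preLiebia-condition} for $r_a$. Condition (i) for $r$ is then immediate because $r-\tau(r)=2r_a$. Proposition \ref{mock-r-matrix} therefore yields $J_\alpha=0$, so $(\g^*,\cdot_r)$ is mock-pre-Lie. The converse direction needs only the homomorphism identity from the second step.

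I expect the main obstacle to be the bookkeeping in the second step: the terms of $[[r,r]]$ must be identified with the correct tensor legs, and the $I$-dependent leftovers must be shown to cancel via the invariance. If a direct cancellation fails, I would first try symmetrizing, i.e. proving that the $r_+$-defect and the $r_-$-defect each differ from the pairing of $[[r,r]]$ by an expression that is linear in $I$ and vanishes by \eqref{skew-invariant}.
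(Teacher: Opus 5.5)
Your proposal is correct and is essentially the paper's proof. The paper establishes exactly the identity $r_{+}(\xi\cdot_{r}\eta)-r_{+}(\xi)\cdot_{\g} r_{+}(\eta)=-[[r,r]](\xi,\cdot,\eta)$, so $\zeta$ sits in the middle slot. It treats $r_-$ in parallel and gets the mock-pre-Lie property of $\cdot_r$ from Theorem \ref{mock-preLiebia-condition} and Proposition \ref{deduce-condition} just as you do; the only cosmetic difference is that it first splits $[[r,r]]=[[r_s,r_s]]+[[r_a,r_a]]$ and works with the $r_s$-form \eqref{mockLie-induced-by-sym} of $\cdot_r$.

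Your direct bookkeeping also closes without any symmetrization fallback. Before using invariance, $\langle\zeta,r_+(\xi\cdot_r\eta)-r_+(\xi)\cdot_{\g}r_+(\eta)\rangle+[[r,r]](\xi,\zeta,\eta)$ equals $\langle\xi,\{r_-(\eta),I\zeta\}_{c}\rangle-\langle\eta,\{r_+(\xi),I\zeta\}_{c}\rangle+\langle\zeta,r_-(\eta)\cdot_{\g}I\xi\rangle-\langle\zeta,r_+(\xi)\cdot_{\g}I\eta\rangle$. These four terms cancel in pairs by \eqref{skew-invariant}, using $(r_a)_+=\tfrac12 I$.
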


\begin{proof}
If $r$ satisfies $[[r,r]]=0,$ then by Theorem \ref{mock-preLiebia-condition} and Proposition \ref{deduce-condition}, $(\g^{\ast},\cdot_{r})$ is a mock-pre-Lie algebra.
By a direct calculation, for all $\xi,\eta,\zeta\in \g^*,$ we have
\begin{eqnarray*}
[[r_{s},r_{s}]](\xi,\eta,\zeta)
=\langle\eta,(r_{s})_{+}(\xi)\cdot_{\g}(r_{s})_{+}(\zeta)\rangle+\langle\xi,(r_{s})_{+}(\eta)\cdot_{\g}(r_{s})_{+}(\zeta)\rangle-\langle\zeta,\{(r_{s})_{+}(\xi),(r_{s})_{+}(\eta)\}_{c}\rangle.
\end{eqnarray*}
Moreover, by \eqref{skew-invariant}, we have
\begin{eqnarray*}
[[r_{a},r_{a}]](\xi,\eta,\zeta)=-\langle\zeta,\{(r_{a})_{+}(\xi),(r_{a})_{+}(\eta)\}_{c}\rangle
\end{eqnarray*}
and
\begin{eqnarray*}
[[r_{s},r_{a}]](\xi,\eta,\zeta)+[[r_{a},r_{s}]](\xi,\eta,\zeta)=0,
\end{eqnarray*}
\emptycomment{
consider the skew-symmetric part $r_{a}$ of $r$, that means $(r_{a})_{ij}=-(r_{a})_{ji}, i,j=1,2,3,$ by \eqref{skew-invariant}, we have
\begin{eqnarray*}
&&[[r_{a},r_{a}]](\xi,\eta,\zeta)\\
&=&\Big(-(r_{a})_{13}\cdot (r_{a})_{12}-(r_{a})_{23}\cdot (r_{a})_{21}+\{(r_{a})_{23},(r_{a})_{12}\}+\{(r_{a})_{13},(r_{a})_{21}\}-\{(r_{a})_{13},(r_{a})_{23}\}\Big)(\xi,\eta,\zeta)\\
&=&\Big(-\sum_{i,j}a_i\cdot_{\g} a_j\otimes b_j\cdot_{\g}b_i-\sum_{i,j}b_j\otimes a_i\cdot_{\g} a_j\otimes b_i+\sum_{i,j}a_j\otimes \{a_i, b_j\}_{c}\otimes b_i\\
&&+\sum_{i,j}\{a_i, b_j\}_{c}\otimes a_j\otimes b_i-\sum_{i,j}a_i\otimes a_j\otimes \{b_i, b_j\}_{c}\Big)(\xi,\eta,\zeta)\\
&=&-\sum_{i,j}\langle \xi,a_i\cdot_{\g} a_j\rangle\langle\eta,b_j\rangle\langle\zeta,b_i\rangle-\sum_{i,j}\langle \xi,b_j\rangle\langle\eta,a_i\cdot_{\g} a_j\rangle\langle\zeta,b_i\rangle+\sum_{i,j}\langle \xi,a_j\rangle\langle\eta,\{a_i, b_j\}_{c}\rangle\langle\zeta,b_i\rangle\\
&&+\sum_{i,j}\langle \xi,\{a_i, b_j\}_{c}\rangle\langle\eta,a_j\rangle\langle\zeta,b_i\rangle-\sum_{i,j}\langle \xi,a_i\rangle\langle\eta,a_j\rangle\langle\zeta,\{b_i,b_j\}_{c}\rangle\\
&=&-\langle\xi,(r_{a})_{+}(\zeta)\cdot_{\g}(r_{a})_{+}(\eta)\rangle-\langle\eta,(r_{a})_{+}(\zeta)\cdot_{\g}(r_{a})_{+}(\xi)\rangle
-\langle\eta,\{(r_{a})_{+}(\zeta),(r_{a})_{+}(\xi)\}_{c}\rangle\\
&&-\langle\xi,\{(r_{a})_{+}(\zeta),(r_{a})_{+}(\eta)\}_{c}\rangle-\langle\zeta,\{(r_{a})_{+}(\xi),(r_{a})_{+}(\eta)\}_{c}\rangle\\
&\stackrel{\text{\eqref{skew-invariant}}}{=}&-\langle\zeta,\{(r_{a})_{+}(\xi),(r_{a})_{+}(\eta)\}_{c}\rangle,
\end{eqnarray*}
and
\begin{eqnarray*}
&&[[r_{s},r_{a}]](\xi,\eta,\zeta)+[[r_{a},r_{s}]](\xi,\eta,\zeta)\\
&=&\Big(-(r_{s})_{13}\cdot (r_{a})_{12}-(r_{s})_{23}\cdot (r_{a})_{21}+\{(r_{s})_{23},(r_{a})_{12}\}+\{(r_{s})_{13},(r_{a})_{21}\}-\{(r_{s})_{13},(r_{a})_{23}\}\Big)(\xi,\eta,\zeta)\\
&&+\Big(-(r_{a})_{13}\cdot (r_{s})_{12}-(r_{a})_{23}\cdot (r_{s})_{21}+\{(r_{a})_{23},(r_{s})_{12}\}+\{(r_{a})_{13},(r_{s})_{21}\}-\{(r_{a})_{13},(r_{s})_{23}\}\Big)(\xi,\eta,\zeta)\\
&=&-\sum_{i,j}\langle\xi,a_i\cdot_{\g}a_j\rangle\langle\eta,b_j\rangle\langle\zeta,b_i\rangle
-\sum_{i,j}\langle\xi,b_j\rangle\langle\eta,a_i\cdot_{\g}a_j\rangle\langle\zeta,b_i\rangle+\sum_{i,j}\langle\xi,a_j\rangle\langle\eta,\{a_i,b_j\}_{c}\rangle\langle\zeta,b_i\rangle\\
&&+\sum_{i,j}\langle\xi,\{a_i,b_j\}_{c}\rangle\langle\eta,a_j\rangle\langle\zeta,b_i\rangle-\sum_{i,j}\langle\xi,a_i\rangle\langle\eta,a_j\rangle\langle\zeta,\{b_i,b_j\}_{c}\rangle\\
&&-\sum_{i,j}\langle\xi,a_i\cdot_{\g}a_j\rangle\langle\eta,b_j\rangle\langle\zeta,b_i\rangle
-\sum_{i,j}\langle\xi,b_j\rangle\langle\eta,a_i\cdot_{\g}a_j\rangle\langle\zeta,b_i\rangle+\sum_{i,j}\langle\xi,a_j\rangle\langle\eta,\{a_i,b_j\}_{c}\rangle\langle\zeta,b_i\rangle\\
&&+\sum_{i,j}\langle\xi,\{a_i,b_j\}_{c}\rangle\langle\eta,a_j\rangle\langle\zeta,b_i\rangle-\sum_{i,j}\langle\xi,a_i\rangle\langle\eta,a_j\rangle\langle\zeta,\{b_i,b_j\}_{c}\rangle\\
&=&\uuline{\langle\xi,(r_{s})_{+}(\zeta)\cdot_{\g}(r_{a})_{+}(\eta)\rangle}+\dotuline{\langle\eta,(r_{s})_{+}(\zeta)\cdot_{\g}(r_{a})_{+}(\xi)\rangle}+\uuline{\langle\eta,\{(r_{s})_{+}(\zeta),(r_{a})_{+}(\xi)\}_{c}\rangle}\\
&&+\dotuline{\langle\xi,\{(r_{s})_{+}(\zeta),(r_{a})_{+}(\eta)\}_{c}\rangle}-\langle\zeta, \{(r_{s})_{+}(\xi),(r_{a})_{+}(\eta)\}_{c}\rangle\\
&&+\uwave{\langle\xi,(r_{a})_{+}(\zeta)\cdot_{\g}(r_{s})_{+}(\eta)\rangle}+\dashuline{\langle\eta,(r_{a})_{+}(\zeta)\cdot_{\g}(r_{s})_{+}(\xi)\rangle}\dashuline{-\langle\eta,\{(r_{a})_{+}(\zeta),(r_{s})_{+}(\xi)\}_{c}\rangle}\\
&&\uwave{-\langle\xi,\{(r_{a})_{+}(\zeta),(r_{s})_{+}(\eta)\}_{c}\rangle}-\langle\zeta, \{(r_{a})_{+}(\xi),(r_{s})_{+}(\eta)\}_{c}\rangle\\
&=&-\langle\zeta, \{(r_{s})_{+}(\xi),(r_{a})_{+}(\eta)\}_{c}\rangle-\langle\zeta, \{(r_{a})_{+}(\xi),(r_{s})_{+}(\eta)\}_{c}\rangle\\
&&-\langle\xi,(r_{s})_{+}(\zeta)\cdot_{\g}(r_{a})_{+}(\eta)\rangle-\langle\eta,(r_{s})_{+}(\zeta)\cdot_{\g}(r_{a})_{+}(\xi)\rangle\\
&=&0.
\end{eqnarray*}}
which implies that
\begin{eqnarray*}
[[r,r]](\xi,\eta,\zeta)=[[r_{s}+r_{a},r_{s}+r_{a}]](\xi,\eta,\zeta)=[[r_{s},r_{s}]](\xi,\eta,\zeta)+[[r_{a},r_{a}]](\xi,\eta,\zeta).
\end{eqnarray*}

Next we show that the following equation holds
\begin{eqnarray}\label{eq-homomorphism}
  r_{+}(\xi\cdot_{r}\eta)-r_{+}(\xi)\cdot_{\g} r_{+}(\eta)=-[[r,r]](\xi,\cdot,\eta).
\end{eqnarray}
One the one hand, for the left part, by \eqref{mockLie-induced-by-sym}, we have
\begin{eqnarray*}
\langle\zeta,r_{+}(\xi\cdot_{r}\eta)-r_{+}(\xi)\cdot_{\g} r_{+}(\eta)\rangle&=&\langle\zeta,((r_s)_{+}+(r_a)_{+})(-L^*_{(r_s)_{+}(\xi)}\eta-R^*_{(r_s)_{+}(\xi)}\eta)+R^*_{(r_s)_{+}(\eta)}\xi)\rangle\\
&&-\langle\zeta,((r_s)_{+}+(r_a)_{+})(\xi)\cdot_{\g}((r_s)_{+}+(r_a)_{+})(\eta)\rangle\\
&=&\langle\zeta,(r_s)_{+}(-L^*_{(r_s)_{+}(\xi)}\eta-R^*_{(r_s)_{+}(\xi)}\eta)\rangle+\langle\zeta,(r_s)_{+}(R^*_{(r_s)_{+}(\eta)}\xi)\rangle\\
&&+\langle\zeta,(r_a)_{+}(-L^*_{(r_s)_{+}(\xi)}\eta-R^*_{(r_s)_{+}(\xi)}\eta)\rangle+\langle\zeta,(r_a)_{+}(R^*_{(r_s)_{+}(\eta)}\xi)\rangle\\
&&-\langle\zeta,(r_s)_{+}(\xi)\cdot_{\g}(r_s)_{+}(\eta)\rangle-\langle\zeta,(r_s)_{+}(\xi)\cdot_{\g}(r_a)_{+}(\eta)\rangle\\
&&-\langle\zeta,(r_a)_{+}(\xi)\cdot_{\g}(r_s)_{+}(\eta)\rangle-\langle\zeta,(r_a)_{+}(\xi)\cdot_{\g}(r_a)_{+}(\eta)\rangle\\
&=&\langle\eta,\{(r_s)_{+}(\xi),(r_s)_{+}(\zeta)\}_{c}\rangle-\langle\xi,(r_s)_{+}(\zeta)\cdot_{\g}(r_s)_{+}(\eta)\rangle\\
&&-\langle\eta,\{(r_a)_{+}(\zeta),(r_s)_{+}(\xi)\}_{c}\rangle+\langle\xi,(r_a)_{+}(\zeta)\cdot_{\g}(r_s)_{+}(\eta)\rangle\\
&&-\langle\zeta,(r_s)_{+}(\xi)\cdot_{\g}(r_s)_{+}(\eta)\rangle-\langle\zeta,(r_s)_{+}(\xi)\cdot_{\g}(r_a)_{+}(\eta)\rangle\\
&&-\langle\zeta,(r_a)_{+}(\xi)\cdot_{\g}(r_s)_{+}(\eta)\rangle-\langle\zeta,(r_a)_{+}(\xi)\cdot_{\g}(r_a)_{+}(\eta)\rangle
\end{eqnarray*}

One the other hand, we have
\begin{eqnarray*}
 [[r,r]](\xi,\zeta,\eta)&=&[[r_{s},r_{s}]](\xi,\zeta,\eta)+[[r_{a},r_{a}]](\xi,\zeta,\eta)\\
 &=&\langle\zeta,(r_{s})_{+}(\xi)\cdot_{\g}(r_{s})_{+}(\eta)\rangle+\langle\xi,(r_{s})_{+}(\zeta)\cdot_{\g}(r_{s})_{+}(\eta)\rangle\\
 &&-\langle\eta,\{(r_{s})_{+}(\xi),(r_{s})_{+}(\zeta)\}_{c}\rangle-\langle\eta,\{(r_{a})_{+}(\xi),(r_{a})_{+}(\zeta)\}_{c}\rangle.
\end{eqnarray*}

Thus we have
\begin{eqnarray*}
&&\langle\zeta,r_{+}(\xi\cdot_{r}\eta)-r_{+}(\xi)\cdot_{\g} r_{+}(\eta)+ [[r,r]](\xi,\cdot,\eta)\rangle\\
&=&\langle\eta,\{(r_s)_{+}(\xi),(r_s)_{+}(\zeta)\}_{c}\rangle-\langle\xi,(r_s)_{+}(\zeta)\cdot_{\g}(r_s)_{+}(\eta)\rangle
-\langle\eta,\{(r_a)_{+}(\zeta),(r_s)_{+}(\xi)\}_{c}\rangle\\
&&+\langle\xi,(r_a)_{+}(\zeta)\cdot_{\g}(r_s)_{+}(\eta)\rangle-\langle\zeta,(r_s)_{+}(\xi)\cdot_{\g}(r_s)_{+}(\eta)\rangle-\langle\zeta,(r_s)_{+}(\xi)\cdot_{\g}(r_a)_{+}(\eta)\rangle\\
&&-\langle\zeta,(r_a)_{+}(\xi)\cdot_{\g}(r_s)_{+}(\eta)\rangle-\langle\zeta,(r_a)_{+}(\xi)\cdot_{\g}(r_a)_{+}(\eta)\rangle+\langle\zeta,(r_{s})_{+}(\xi)\cdot_{\g}(r_{s})_{+}(\eta)\rangle\\
&&+\langle\xi,(r_{s})_{+}(\zeta)\cdot_{\g}(r_{s})_{+}(\eta)\rangle-\langle\eta,\{(r_{s})_{+}(\xi),(r_{s})_{+}(\zeta)\}_{c}\rangle-\langle\eta,\{(r_{a})_{+}(\xi),(r_{a})_{+}(\zeta)\}_{c}\rangle\\
 &=&\uwave{-\langle\eta,\{(r_a)_{+}(\zeta),(r_s)_{+}(\xi)\}_{c}\rangle}+\langle\xi,(r_a)_{+}(\zeta)\cdot_{\g}(r_s)_{+}(\eta)\rangle\uwave{-\langle\zeta,(r_s)_{+}(\xi)\cdot_{\g}(r_a)_{+}(\eta)\rangle}\\
 &&-\langle\zeta,(r_a)_{+}(\xi)\cdot_{\g}(r_s)_{+}(\eta)\rangle-\langle\zeta,(r_a)_{+}(\xi)\cdot_{\g}(r_a)_{+}(\eta)\rangle-\langle\eta,\{(r_{a})_{+}(\xi),(r_{a})_{+}(\zeta)\}_{c}\rangle\\
  &=&\langle\xi,\{(r_a)_{+}(\zeta),(r_s)_{+}(\eta)\}_{c}\rangle-\langle\xi,(r_s)_{+}(\eta)\cdot_{\g}(r_a)_{+}(\zeta)\rangle-\langle\zeta,\{(r_a)_{+}(\xi),(r_s)_{+}(\eta)\}_{c}\rangle\\
  &&+\langle\zeta,(r_s)_{+}(\eta)\cdot_{\g}(r_a)_{+}(\xi)\rangle-\langle\zeta,(r_a)_{+}(\xi)\cdot_{\g}(r_a)_{+}(\eta)\rangle-\langle\eta,\{(r_{a})_{+}(\xi),(r_{a})_{+}(\zeta)\}_{c}\rangle\\
  &\stackrel{\text{\eqref{skew-invariant}}}{=}&0,
\end{eqnarray*}
which implies that  \eqref{eq-homomorphism} holds. Thus, if $r$ satisfies $[[r,r]]=0$,
then $r_+$ is a mock-pre-Lie algebra homomorphism. Similarly, we can also prove that  $r_{-}$ is a mock-pre-Lie algebra homomorphism.

Conversely, if $(\g^{\ast},\cdot_{r})$ is a mock-pre-Lie algebra and $r_{+}$ is a mock-pre-Lie algebra homomorphism, by \eqref{eq-homomorphism}, we have $ [[r,r]]=0$.
\end{proof}

Next we introduce the notion of relative Rota-Baxter operators of weight $\lambda$ from a mock-pre-Lie algebra $\h$ to a mock-pre-Lie algebra $\g$ with respect to an action $(\rho,\mu).$
\begin{defi}
Let $(\g,\cdot_{\g})$ and $(\h,\cdot_{\h})$ be two mock-pre-Lie algebras.
Let two linear maps $\rho,\mu:\g \to \gl(\h)$ be a representation of the mock-pre-Lie algebra $\g$ on the vector space $\h$.
If for all $x\in \g, u,v\in \h,$
\begin{eqnarray}
\label{eq:mock-pre-action-1}{}\mu(x)(u\cdot_{\h}v)+u\cdot_{\h}(\mu(x)v)+v\cdot_{\h}(\mu(x)u)+\mu(x)(v\cdot_{\h}u)&=&0,\\
\label{eq:mock-pre-action-2}{}(\mu(x)u)\cdot_{\h}v+u\cdot_{\h}(\rho(x)v)+(\rho(x)u)\cdot_{\h}v+\rho(x)(u\cdot_{\h}v)&=&0,
\end{eqnarray}
then the pair $(\rho,\mu)$ is called  an {\bf action} of $(\g,\cdot_{\g})$ on $(\h,\cdot_{\h}).$
\end{defi}
\begin{ex}
Let $(\g,\cdot_{\g})$ be a mock-pre-Lie algebra. Then the regular representation $(\g;L,R)$ which is defined in Example \ref{regular-representation-mock-pre-Lie} is an action of $\g$ on itself.
\end{ex}

Let $(\rho,\mu)$ be an action of a mock-pre-Lie algebra $(\g,\cdot_{\g})$ on a mock-pre-Lie algebra $(\h,\cdot_{\h}).$ Define a new operation $\cdot_{(\rho,\mu)}$ on $\g\oplus\h$ by
\begin{eqnarray}
{}(x+u)\cdot_{(\rho,\mu)} (y+v)=x\cdot_{\g} y+\rho(x)v+\mu(y)u+u\cdot_{\h} v, \quad \forall x,y\in \g,u,v\in \h.
\end{eqnarray}

\begin{pro}\label{pro:semi}
 With above notations, $(\g\oplus \h,\cdot_{(\rho,\mu)})$ is a mock-pre-Lie algebra, which is called the semidirect product of the mock-pre-Lie algebra $\g$ and the mock-pre-Lie algebra $\h$ with respect to the action $(\rho,\mu).$
\end{pro}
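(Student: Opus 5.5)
We verify directly that the anti-associator of $\cdot_{(\rho,\mu)}$ is skew-symmetric in its first two arguments, i.e. that
\[
Aass(X,Y,Z)+Aass(Y,X,Z)=0,\qquad X=x+u,\ Y=y+v,\ Z=z+w,
\]
where $Aass(X,Y,Z)=(X\cdot Y)\cdot Z+X\cdot(Y\cdot Z)$ and $\cdot=\cdot_{(\rho,\mu)}$. Since this expression is trilinear, it suffices to check it when each of $X,Y,Z$ lies in $\g$ or in $\h$. Using the symmetry in $X\leftrightarrow Y$, this leaves six cases: $(\g,\g,\g)$, $(\g,\g,\h)$, $(\g,\h,\g)$, $(\g,\h,\h)$, $(\h,\h,\g)$ and $(\h,\h,\h)$. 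The case $(\h,\g,\cdot)$ is covered by $(\g,\h,\cdot)$ after symmetrizing.

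The cases that involve at most one element of $\h$ are exactly the axioms of the semidirect product $\g\ltimes_{\rho,\mu}\h$. By the lemma of \cite{Attan-Nabil} recalled above, they hold because $(\h;\rho,\mu)$ is a representation. Concretely:
\begin{itemize}
\item $(\g,\g,\g)$ is \eqref{defi-mock-preLie-equation} for $\g$.
\item $(\g,\g,\h)$ reduces to $\rho(\{x,y\}_c)=-\rho(x)\rho(y)-\rho(y)\rho(x)$.
\item $(\g,\h,\g)$ reduces to \eqref{mock-pre-rep}.
\end{itemize}
The case $(\h,\h,\h)$ is \eqref{defi-mock-preLie-equation} for $\h$.

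The two remaining mixed cases are where the action axioms enter.

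For $(x,u,v)$ with $x\in\g$ and $u,v\in\h$, we compute
\[
(x\cdot u)\cdot v+x\cdot(u\cdot v)+(u\cdot x)\cdot v+u\cdot(x\cdot v)=(\rho(x)u)\cdot_\h v+\rho(x)(u\cdot_\h v)+(\mu(x)u)\cdot_\h v+u\cdot_\h(\rho(x)v).
\]
This vanishes exactly by \eqref{eq:mock-pre-action-2}.

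For $(u,v,x)$ with $u,v\in\h$ and $x\in\g$, we compute
\[
(u\cdot v)\cdot x+u\cdot(v\cdot x)+(v\cdot u)\cdot x+v\cdot(u\cdot x)=\mu(x)(u\cdot_\h v)+u\cdot_\h(\mu(x)v)+\mu(x)(v\cdot_\h u)+v\cdot_\h(\mu(x)u).
\]
This is precisely \eqref{eq:mock-pre-action-1}.

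The only real work is bookkeeping: making sure that each product lands in the correct component and that the sign conventions match those in the definition of an action. I expect the mixed $\h$-$\h$-$\g$ cases to be the only point needing care, and they match the two action axioms term by term. This proves that $(\g\oplus\h,\cdot_{(\rho,\mu)})$ is a mock-pre-Lie algebra.
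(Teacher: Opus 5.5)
Your proof is correct and takes the same approach as the paper. The paper only says the result follows from straightforward computations and omits them; your reduction to six cases by trilinearity and $X\leftrightarrow Y$ symmetry supplies exactly those computations. Each case matches the corresponding mock-pre-Lie, representation or action axiom (\eqref{eq:mock-pre-action-2} for $(\g,\h,\h)$, \eqref{eq:mock-pre-action-1} for $(\h,\h,\g)$).
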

\begin{proof}
  It follows from straightforward computations, and we omit details.
\end{proof}

\begin{defi}
Let $(\g,\cdot_{\g})$ and $(\h,\cdot_{\h})$ be two mock-pre-Lie algebras and $(\rho,\mu)$ be an action of $(\g,\cdot_{\g})$ on $(\h,\cdot_{\h})$. Then a linear map $B:\h\to \g$ is called a {\bf relative Rota-Baxter operator of weight $\lambda$} from a mock-pre-Lie algebra $\h$ to a mock-pre-Lie algebra $\g$ with respect to an action $(\rho,\mu)$ if $B$ satisfies the following equation:
\begin{equation}\label{rRBO}
(Bu)\cdot_{\g} (Bv)=B(\rho(Bu)v+\mu(Bv)u+\lambda u\cdot_\h v), \quad \forall u,v\in \h.
\end{equation}
\end{defi}

\begin{defi}
Let $(\g,\cdot_{\g})$ be a mock-pre-Lie algebra.
A linear map $B:\g\rightarrow\g$ is called a {\bf Rota-Baxter operator of weight $\lambda$} on a mock-pre-Lie algebra $(\g,\cdot_{\g})$ if
\begin{eqnarray*}
 (Bx)\cdot_{\g}(By)=B((Bx)\cdot_{\g}y+x\cdot_{\g}(By)+\lambda x\cdot_{\g}y),\quad \forall x,y\in \g.
\end{eqnarray*}
Moreover, a mock-pre-Lie algebra $(\g,\cdot_{\g})$ equipped with a Rota-Baxter operator $B$ is called a
Rota-Baxter mock-pre-Lie algebra of weight $\lambda$, which is denoted by $(\g,\cdot_{\g},B)$.
\end{defi}

Let $(\g,\cdot_{\g},B)$ be a Rota-Baxter mock-pre-Lie algebra of weight $\lambda$. Then there is a new mock-pre-Lie multiplication $\cdot_{B}$ on $\g$ defined by
\begin{eqnarray*}
x\cdot_{B} y=(Bx)\cdot_{\g} y+x\cdot_{\g} (By)+\lambda x\cdot_{\g}y.
\end{eqnarray*}
The mock-pre-Lie algebra $(\g,\cdot_{\g},B)$ is called the descendent mock-pre-Lie algebra and denoted by $\g_{B}$. Furthermore, $B$ is a mock-pre-Lie algebra homomorphism from the descendent mock-pre-Lie algebra $\g_{B}$ to $\g$.

\begin{rmk}
Note that a Rota-Baxter operator on a mock-pre-Lie algebra is a relative Rota-Baxter operator with respect to the regular action $(\g;L,R)$.
If the mock-pre-Lie algebra $(\h,\cdot_{\h})$ is abelian, then the action $(\rho,\mu)$ reduce to a representation, and the relative Rota-Baxter operator of weight $\lambda$ reduce to the relative Rota-Baxter operator of weight $0$ (also called an $\huaO$-operator). See \cite{Na-Attan} for more details.
\end{rmk}

By $I=r_{+}-r_{-},$ the mock-pre-Lie algebra multiplication $\cdot_{r}$ on $\g^*$ given by \eqref{dual-space-mockLie} reduces to
\begin{eqnarray}\label{new-dual-space-mockLie}
 \xi\cdot_{r} \eta=-(L^*_{r_{+}(\xi)}+R^*_{r_{+}(\xi)})\eta+R^*_{r_{+}(\eta)}\xi-R^*_{I(\eta)}\xi,  \quad \forall \xi,\eta \in \g^*.
\end{eqnarray}

Define a new multiplication $\cdot_+$ on $\g^*$ as follows:
\begin{equation}
\xi \cdot_+ \eta=R_{I(\eta)}^* \xi, \quad \forall \xi,\eta\in \g^*.
\end{equation}

\begin{pro}\label{cdot-mock-preLIE}
Let $(\g, \cdot_{\g})$ be a mock-pre-Lie algebra and $r\in \g\otimes
\g.$ If the skew-symmetric part $r_a$ of $r$ is $(L,L+R)$-invariant, then $(\g^*,\cdot_+)$ is a mock-pre-Lie algebra.
\end{pro}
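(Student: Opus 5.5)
The plan is to check the defining identity \eqref{defi-mock-preLie-equation} for $\cdot_+$ directly. I would pair the anti-associator with an arbitrary $x\in\g$ and use the invariance of $r_a$ (in the form of Proposition \ref{invariance2}) to turn everything into identities in $(\g,\cdot_\g)$.

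Concretely, we must show that for all $\xi,\eta,\zeta\in\g^*$
\begin{equation*}
R^*_{I(\zeta)}R^*_{I(\eta)}\xi+R^*_{I(R^*_{I(\zeta)}\eta)}\xi+R^*_{I(\zeta)}R^*_{I(\xi)}\eta+R^*_{I(R^*_{I(\zeta)}\xi)}\eta=0 .
\end{equation*}

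The first step is to derive an intertwining relation for $I$ and $R^*$. Proposition \ref{invariance2} gives $I\circ L^*_x=-(L_x+R_x)\circ I$ and $L_x\circ I=-I\circ(L^*_x+R^*_x)$. Subtracting, we get
\begin{equation*}
I\circ R^*_x=-L_x\circ I-I\circ L^*_x=R_x\circ I,\qquad \forall x\in\g .
\end{equation*}
Hence $I(R^*_{I(\zeta)}\eta)=I(\eta)\cdot_\g I(\zeta)$. This shows that $I$ is a homomorphism from $(\g^*,\cdot_+)$ to $(\g,\cdot_\g)$, and it lets us rewrite the second and fourth terms as $R^*_{I(\eta)\cdot_\g I(\zeta)}\xi$ and $R^*_{I(\xi)\cdot_\g I(\zeta)}\eta$.

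Next, set $a=I(\xi)$, $b=I(\eta)$, $c=I(\zeta)$ and pair the sum with $x$. The $\xi$-terms become $\langle\xi,(x\cdot_\g c)\cdot_\g b-x\cdot_\g(b\cdot_\g c)\rangle$, and the $\eta$-terms become $\langle\eta,(x\cdot_\g c)\cdot_\g a-x\cdot_\g(a\cdot_\g c)\rangle$. These two contributions live on different covectors, so to compare them I would stop pairing with $x$. Instead I would evaluate the whole expression against $I(\theta)$ for arbitrary $\theta$, or equivalently transport the $\eta$-terms onto $\xi$ using $I^*=-I$ together with the invariance relations. Identities such as $\langle\eta,u\cdot_\g a\rangle=\langle\eta,R_a u\rangle$, combined with $R_a\circ I=I\circ R^*_a$ and $L_x\circ I=-I\circ(L^*_x+R^*_x)$, let everything be written as $\langle\xi,\,\cdot\,\rangle$ applied to expressions built from $a,b,c$ and $x$.

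The last step is to show that the resulting expression vanishes. After symmetrizing in $\xi\leftrightarrow\eta$ (that is, in $a\leftrightarrow b$), it should reduce to combinations of $\mathrm{Aass}(u,v,w)+\mathrm{Aass}(v,u,w)$ in $\g$, which vanish by \eqref{defi-mock-preLie-equation}. A second consistency tool is \eqref{skew-invariant}, which encodes the same invariance as a trilinear identity.

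The main obstacle is this bookkeeping step: moving the $\eta$-terms onto $\xi$ without losing track of signs. The hoped-for cancellation is $I$-twisted, so it is not a term-by-term match. If the direct transport becomes unwieldy, my fallback is to exploit the fact that $I$ is a homomorphism: the identity holds on the image of $I$ because $\g$ is mock-pre-Lie. For the general case, I would verify the identity by evaluating $\langle\,\cdot\,,x\rangle$ with $x$ ranging over $\Img I$ and over a complement on which all the terms can be computed from the invariance relations.
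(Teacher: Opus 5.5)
Your plan is the paper's proof: pair the anti-associator sum with $x$, use $I\circ R^*_x=R_x\circ I$, $L_x\circ I=-I\circ(L^*_x+R^*_x)$ and $I^*=-I$ to move the $\eta$-terms onto $\xi$, and conclude with \eqref{defi-mock-preLie-equation}. The transport you flag as the obstacle is short: the $\eta$-terms become $-\langle\xi,\{x\cdot_{\g} c,b\}_c+\{x,b\}_c\cdot_{\g} c\rangle$, so the total is $-\langle\xi,\mathrm{Aass}(x,b,c)+\mathrm{Aass}(b,x,c)\rangle=0$, with the skew-symmetry in $x,b$ rather than $a,b$; hence your fallback via $\Img I$ is unnecessary (and would not help on its own, since pairing with $\Img I$ already vanishes by the homomorphism property, leaving the complement to this same computation).
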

\begin{proof}
By \eqref{defi-mock-preLie-equation} and Proposition \ref{invariance2}, for all $x\in \g,\xi,\eta,\zeta\in \g^*,$ we have
\begin{eqnarray*}
 &&\langle(\xi\cdot_{+}\eta)\cdot_{+}\zeta+\xi\cdot_{+}(\eta\cdot_{+}\zeta)+(\eta\cdot_{+}\xi)\cdot_{+}\zeta+\eta\cdot_{+}(\xi\cdot_{+}\zeta),x\rangle\\
 &=&\langle R^*_{I\zeta}R^*_{I\eta}\xi+R^*_{I(R^*_{I\zeta}\eta)}\xi+R^*_{I\zeta}R^*_{I\xi}\eta+R^*_{I(R^*_{I\zeta}\xi)}\eta,x\rangle\\
 &=&\langle\xi,(x\cdot_{\g}I\zeta)\cdot_{\g}I\eta-x\cdot_{\g}(I\eta \cdot_{\g}I\zeta)-\{x\cdot_{\g}I\zeta,I\eta\}_{c}-\{x,I\eta\}_{c}\cdot_{\g}I\zeta\rangle\\
  &=&0,
\end{eqnarray*}
which implies that $(\xi\cdot_{+}\eta)\cdot_{+}\zeta+\xi\cdot_{+}(\eta\cdot_{+}\zeta)=-(\eta\cdot_{+}\xi)\cdot_{+}\zeta-\eta\cdot_{+}(\xi\cdot_{+}\zeta)$. Thus $(\g^*,\cdot_+)$ is a mock-pre-Lie algebra.
\end{proof}

\begin{lem}
$(-L^*-R^*,R^*)$ is an action of the mock-pre-Lie algebra $(\g,\cdot_\g)$ on the mock-pre-Lie algebra $(\g^*,\cdot_+)$ given in Proposition \ref{cdot-mock-preLIE}.
\end{lem}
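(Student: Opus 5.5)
The statement has two parts. First, $(-L^*-R^*,R^*)$ must be a representation of $(\g,\cdot_\g)$ on $\g^*$; this is Proposition \ref{dual-rep-mockpre} applied to the regular representation. Second, \eqref{eq:mock-pre-action-1} and \eqref{eq:mock-pre-action-2} must hold with $\rho=-L^*-R^*$, $\mu=R^*$, $\cdot_\h=\cdot_+$, $u=\xi$, $v=\eta$. So the plan is to verify these two identities by pairing them with an arbitrary $y\in\g$, translating everything into products in $\g$ together with the operator $I$, and then cancelling with the mock-pre-Lie identity \eqref{defi-mock-preLie-equation}.

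The first step is to assemble the commutation rules for $I$. These come from Proposition \ref{invariance2}, which applies since $r_a$ is $(L,L+R)$-invariant (the standing hypothesis of Proposition \ref{cdot-mock-preLIE}). It gives
\begin{eqnarray*}
I\circ L_x^*=-(L_x+R_x)\circ I,\qquad L_x\circ I=-I\circ(L_x^*+R_x^*).
\end{eqnarray*}
Combining the two gives
\begin{eqnarray*}
I\circ R_x^*=-L_x\circ I-I\circ L_x^*=R_x\circ I,
\end{eqnarray*}
and hence $I\circ(L^*_x+R^*_x)=-L_x\circ I$. Consequently, whenever $I$ is applied to $\mu(x)\xi$ or $\rho(x)\xi$, it can be pushed through:
\begin{eqnarray*}
I(R^*_x\xi)=(I\xi)\cdot_\g x,\qquad I(\rho(x)\xi)=x\cdot_\g I\xi.
\end{eqnarray*}
I will also use $I^*=-I$, which gives $\langle\eta,I\xi\rangle=-\langle\xi,I\eta\rangle$. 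Finally, the relation $L_x\circ I=-I\circ(L_x^*+R_x^*)$, read in dual form, lets me move an expression of the form $\langle\eta, z\cdot_\g I\xi\rangle$ over to $\langle\xi,\cdots I\eta\rangle$.

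For \eqref{eq:mock-pre-action-1}, the four terms are
\begin{eqnarray*}
R^*_xR^*_{I\eta}\xi+R^*_{I R^*_x\eta}\xi+R^*_{IR^*_x\xi}\eta+R^*_xR^*_{I\xi}\eta .
\end{eqnarray*}
Pairing with $y$ and using $\langle R^*_a\xi,y\rangle=-\langle\xi,y\cdot_\g a\rangle$, the $\xi$-terms become
\begin{eqnarray*}
\langle\xi,(y\cdot_\g x)\cdot_\g I\eta-y\cdot_\g((I\eta)\cdot_\g x)\rangle,
\end{eqnarray*}
and symmetrically for the $\eta$-terms with $I\xi$. For \eqref{eq:mock-pre-action-2}, I proceed the same way, now also using $I(\rho(x)\xi)=x\cdot_\g I\xi$; the result is again a sum of pairings of $\xi$ or $\eta$ against triple products of $y$, $x$ and $I\eta$ or $I\xi$.

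The main obstacle is the mixed terms in which $\eta$ is paired against something containing $I\xi$. These must be converted into $\xi$ paired against something containing $I\eta$, so that everything is a single functional $\langle\xi,\Phi(x,y,I\eta)\rangle$. I plan to do this by rewriting $\langle\eta,a\cdot_\g I\xi\rangle$ and $\langle\eta,(I\xi)\cdot_\g a\rangle$ with the rules above; for instance $L_a\circ I=-I\circ(L^*_a+R^*_a)$ followed by $I^*=-I$ turns the first into $-\langle\xi,\{a,I\eta\}_c\rangle$ up to sign. When the inner argument $a$ is itself a product, I may need to apply the rule twice. Once all terms are of the form $\langle\xi,\Phi\rangle$, I expect $\Phi$ to be exactly an instance of \eqref{defi-mock-preLie-equation}, rewritten as $\{u,v\}_c\cdot_\g w=-u\cdot_\g(v\cdot_\g w)-v\cdot_\g(u\cdot_\g w)$, with $u,v,w$ some arrangement of $x$, $y$ and $I\eta$, mirroring the final cancellation in the proof of Proposition \ref{cdot-mock-preLIE}. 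That would show it vanishes. The sign bookkeeping in this step is where I expect errors to arise, so I would track each term's sign explicitly.
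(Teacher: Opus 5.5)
Your proposal is correct and follows essentially the paper's own proof. Pairing with $y$, using Proposition \ref{invariance2} (in the combined forms $I\circ R^*_x=R_x\circ I$ and $I\circ(L^*_x+R^*_x)=-L_x\circ I$) together with $I^*=-I$, turns \eqref{eq:mock-pre-action-1} into exactly the paper's expression $\langle\xi,(y\cdot_\g x)\cdot_\g I\eta-\{y\cdot_\g x,I\eta\}_c-y\cdot_\g(I\eta\cdot_\g x)-\{y,I\eta\}_c\cdot_\g x\rangle$, which vanishes by \eqref{defi-mock-preLie-equation} with $(u,v,w)=(y,I\eta,x)$ as you anticipate; for \eqref{eq:mock-pre-action-2}, which the paper leaves to direct computation, no mixed $\eta$-terms arise at all, and your method gives $-\langle\xi,x\cdot_\g(y\cdot_\g I\eta)+y\cdot_\g(x\cdot_\g I\eta)+\{x,y\}_c\cdot_\g I\eta\rangle=0$.
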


\begin{proof}
By \eqref{defi-mock-preLie-equation} and Proposition \ref{invariance2}, for all $x,y\in \g,\xi,\eta\in \g^*,$ we have
\begin{eqnarray*}
  &&\langle R^*_{x}(\xi\cdot_{+}\eta)+R^*_{x}(\eta\cdot_{+}\xi)+\xi\cdot_{+}(R^*_{x}\eta)+\eta\cdot_{+}(R^*_{x}\xi),y\rangle\\
  &=&\langle R^*_{x}R^*_{I\eta}\xi+R^*_{x}R^*_{I\xi}\eta+R^*_{I(R^*_{x}\eta)}\xi+R^*_{I(R^*_{x}\xi)}\eta,y\rangle\\
  &=&\langle\xi,(y\cdot_{\g}x)\cdot_{\g} I\eta-\{y\cdot_{\g}x,I\eta\}_{c}-y\cdot_{\g}(I\eta\cdot_{\g}x)-\{y,I\eta\}_{c}\cdot_{\g}x\rangle\\
  &=&0.
\end{eqnarray*}
From the above, equation \eqref{eq:mock-pre-action-1} holds, and  \eqref{eq:mock-pre-action-2} holds by direct computation. Consequently,
$(-L^*-R^*,R^*)$ is an action of the mock-pre-Lie algebra $(\g,\cdot_\g)$ on $(\g^*,\cdot_+).$
\end{proof}

Furthermore, we prove that quasi-triangular mock-pre-Lie bialgebras naturally induces a relative Rota-Baxter operator of weight $-1$
with respect to the coregular representation $(-L^*-R^*,R^*)$.

\begin{thm}\label{quasi-rRBO}
Let $(\g,\g_r^*)$ be a quasi-triangular mock-pre-Lie bialgebra induced by $r\in \g\otimes \g$. Then $r_+:(\g^*,\cdot_+) \to (\g,\cdot_\g)$ is a relative Rota-Baxter operators of weight $-1$ with
respect to the action $(-L^*-R^*,R^*)$.
\end{thm}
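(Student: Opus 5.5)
We reduce the statement to Theorem \ref{heart}, which says $r_+$ is a homomorphism from $(\g^*,\cdot_r)$ to $(\g,\cdot_\g)$. The idea is to rewrite the product $\cdot_r$ in the shape of the relative Rota-Baxter identity \eqref{rRBO}. For $\lambda=-1$, with $B=r_+$, $\rho=-L^*-R^*$, $\mu=R^*$ and $\h=(\g^*,\cdot_+)$, the identity to verify reads
\begin{eqnarray*}
r_+(\xi)\cdot_\g r_+(\eta)=r_+\Big(-(L^*_{r_+(\xi)}+R^*_{r_+(\xi)})\eta+R^*_{r_+(\eta)}\xi-\xi\cdot_+\eta\Big),\quad\forall \xi,\eta\in\g^*.
\end{eqnarray*}
By the definition $\xi\cdot_+\eta=R^*_{I(\eta)}\xi$ and formula \eqref{new-dual-space-mockLie}, the argument of $r_+$ on the right-hand side is exactly $\xi\cdot_r\eta$. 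So the required identity is precisely $r_+(\xi)\cdot_\g r_+(\eta)=r_+(\xi\cdot_r\eta)$.

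Concretely, the steps run as follows.

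First, from the definition of a quasi-triangular mock-pre-Lie bialgebra, $[[r,r]]=0$ and $r_a$ is $(L,L+R)$-invariant. Theorem \ref{heart} then gives that $(\g^*,\cdot_r)$ is a mock-pre-Lie algebra and that $r_+$ is a homomorphism $(\g^*,\cdot_r)\to(\g,\cdot_\g)$. Equivalently, by \eqref{eq-homomorphism} with $[[r,r]]=0$, we have $r_+(\xi\cdot_r\eta)=r_+(\xi)\cdot_\g r_+(\eta)$.

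Second, the setting of the statement must make sense. Proposition \ref{cdot-mock-preLIE} shows that $(\g^*,\cdot_+)$ is a mock-pre-Lie algebra, using only the invariance of $r_a$. The preceding lemma shows that $(-L^*-R^*,R^*)$ is an action of $\g$ on $(\g^*,\cdot_+)$.

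Third, we expand \eqref{new-dual-space-mockLie} as $\xi\cdot_r\eta=\rho(r_+\xi)\eta+\mu(r_+\eta)\xi-\xi\cdot_+\eta$ and substitute it into the homomorphism identity. This yields \eqref{rRBO} with $\lambda=-1$.

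The main obstacle is only bookkeeping of signs. We must check that \eqref{new-dual-space-mockLie} correctly follows from \eqref{dual-space-mockLie} via $r_-=r_+-I$, that is, $R^*_{r_-(\eta)}\xi=R^*_{r_+(\eta)}\xi-R^*_{I(\eta)}\xi$, and that the $\mu$-slot in \eqref{rRBO} receives $\mu(Bv)u=R^*_{r_+(\eta)}\xi$ in the correct order. Both are immediate from linearity of $R^*$ in its subscript.
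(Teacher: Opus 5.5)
Your proposal is correct and follows the paper's own proof. You invoke Theorem \ref{heart} to get $r_+(\xi\cdot_r\eta)=r_+(\xi)\cdot_\g r_+(\eta)$, then rewrite $\xi\cdot_r\eta$ via \eqref{new-dual-space-mockLie}, using $r_-=r_+-I$ and $\xi\cdot_+\eta=R^*_{I(\eta)}\xi$, and read off \eqref{rRBO} with $\lambda=-1$; the setting itself rests on Proposition \ref{cdot-mock-preLIE} and the subsequent action lemma, as you note.
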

\begin{proof}
By Theorem \ref{heart} and \eqref{new-dual-space-mockLie}, for all $\xi,\eta\in \g^*,$ we have
 \begin{eqnarray*}
   (r_{+}\xi)\cdot_\g(r_{+}\eta)&=&r_{+}(\xi\cdot_{r}\eta)\\&=&r_{+}(-(L^*_{r_{+}(\xi)}+R^*_{r_{+}(\xi)})\eta+R^*_{r_{+}(\eta)}\xi-R^*_{I(\eta)}\xi)\\
    &=&r_{+}(-(L^*_{r_{+}(\xi)}+R^*_{r_{+}(\xi)})\eta+R^*_{r_{+}(\eta)}\xi-\xi\cdot_{+}\eta),
 \end{eqnarray*}
 which implies that $r_+:(\g^*,\cdot_+) \to (\g,\cdot_\g)$ is a relative Rota-Baxter operators of weight $-1$ with
respect to the action $(-L^*-R^*,R^*)$.
\end{proof}

\emptycomment{
\begin{cor}
 We can also define a new multiplication on $\g^*$ as follows:
$$ \xi \cdot_- \eta=L_{I\xi}^*\eta + R_{I\xi}^*\eta, \quad \forall \xi,\eta\in \g^*,$$
then $(\g^*,\cdot_-)$ is also a mock-pre-Lie algebra. We can prove that $r_-:(\g^*,\cdot_-) \rightarrow (\g,\circ_{\g})$ is a relative Rota-Baxter operators of weight $-1$ with respect to the action $(-L^*-R^*,R^*)$.
\end{cor}
\begin{proof}
 \begin{eqnarray*}
   (r_{-}\xi)\cdot_\g(r_{-}\eta)&=&r_{-}(\xi\cdot_{r}\eta)\\
   &=&r_{-}(-(L^*_{r_{-}(\xi)}+R^*_{r_{-}(\xi)})\eta+R^*_{r_{-}(\eta)}\xi)-L^*_{I(\xi)}\eta-R^*_{I(\xi)}\eta\\
    &=&r_{-}(-(L^*_{r_{-}(\xi)}+R^*_{r_{-}(\xi)})\eta+R^*_{r_{-}(\eta)}\xi-\xi\cdot_{-}\eta),
 \end{eqnarray*}
 which implies that $r_-:(\g^*,\cdot_-) \to (\g,\cdot_\g)$ is a relative Rota-Baxter operators of weight $-1$ with
respect to the action $(-L^*-R^*,R^*)$.
\end{proof}}

Taking a symmetric $r\in \g\otimes \g$ into Theorem \ref{quasi-rRBO}, then we have the following conclusion.

\emptycomment{
\Hou{Explanation}
\begin{rmk}
 Let $K:\g^*\rightarrow\g$ be a relative Rota-Baxter operator on $\g$ with respect to the representation $(-L^*-R^*,R^*)$. Then $\g^*_{K}:=(\g^*,\cdot_{K})$ is a mock-pre-Lie algebra, where $\cdot_{K}$ is given by
 \begin{eqnarray*}
   \xi\cdot_{K}\eta=-(L^*_{K(\xi)}+R^*_{K(\xi)})\eta+R^*_{K(\eta)}\xi,  \quad \forall \xi,\eta \in \g^*.
 \end{eqnarray*}
\end{rmk}}

\begin{cor}
Let $(\g,\g_r^*)$ be a triangular mock-pre-Lie bialgebra induced by a symmetric $r\in \g\otimes \g$. Then $r_+,r_-:\g^* \rightarrow\g$ are relative Rota-Baxter operators of weight $0$ with
respect to the coregular representation $(-L^*-R^*,R^*)$.
\end{cor}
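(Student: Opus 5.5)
We deduce this as a direct specialization of Theorem \ref{quasi-rRBO} (equivalently of Theorem \ref{heart}) to the case $r=r_s$.

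First, since $r$ is symmetric, we have $r_a=0$ and $r_+=r_-$. By \eqref{sign-I} this gives $I=r_+-r_-=0$. The hypothesis that $r_a$ is $(L,L+R)$-invariant then holds trivially, so a triangular mock-pre-Lie bialgebra is in particular quasi-triangular. Moreover, the multiplication $\xi\cdot_+\eta=R^*_{I(\eta)}\xi$ vanishes identically. Thus $(\g^*,\cdot_+)$ is the abelian mock-pre-Lie algebra, and the action $(-L^*-R^*,R^*)$ of $\g$ on it reduces to the coregular representation. The conditions \eqref{eq:mock-pre-action-1} and \eqref{eq:mock-pre-action-2} are then vacuous, as in the Remark following the definition of Rota-Baxter operators.

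Next, apply Theorem \ref{quasi-rRBO}. Since $[[r,r]]=0$, the map $r_+$ satisfies
$$(r_+\xi)\cdot_\g(r_+\eta)=r_+\big(-(L^*_{r_+(\xi)}+R^*_{r_+(\xi)})\eta+R^*_{r_+(\eta)}\xi-\xi\cdot_+\eta\big).$$
Substituting $\xi\cdot_+\eta=0$, this becomes exactly \eqref{rRBO} with $\lambda=0$, $\rho=-L^*-R^*$ and $\mu=R^*$. So $r_+$ is a relative Rota-Baxter operator of weight $0$ with respect to the coregular representation. For $r_-$, we simply observe that $r_-=r_+$ because $r$ is symmetric, so the same identity holds for $r_-$. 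Alternatively, one may invoke the statement in Theorem \ref{heart} that $r_-$ is also a homomorphism from $(\g^*,\cdot_r)$ to $(\g,\cdot_\g)$, combined with \eqref{new-dual-space-mockLie} with $I=0$.

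There is no genuine obstacle here. The only point requiring care is to check that, when $I=0$, the expression \eqref{new-dual-space-mockLie} for $\cdot_r$ coincides with the expression $-(L^*_{r_\pm(\xi)}+R^*_{r_\pm(\xi)})\eta+R^*_{r_\pm(\eta)}\xi$ for both choices of sign. This holds because $r_+=r_-$.
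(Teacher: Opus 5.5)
Your proposal is correct and follows the paper's route. The paper obtains this corollary by substituting a symmetric $r$ into Theorem \ref{quasi-rRBO}: then $I=0$, so $\cdot_+$ is trivial and the weight $-1$ identity becomes the weight $0$ relative Rota-Baxter identity for the coregular representation. Your observation that $r_-=r_+$ handles $r_-$ in the same way, which the paper leaves implicit.
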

\emptycomment{
\begin{cor}
  Let $(\g,\cdot_{\g})$ be a mock-pre-Lie algebra. Suppose that $r\in \g\otimes\g$ is symmetric. Then $r_{+}:\g^*\rightarrow\g$
  is a relative Rota-Baxter operator of weight $0$ on $(\g,\cdot_{\g})$ with respect to the coadjoint representation $(-L^*-R^*,R^*)$ if and only if $r$ is a solution of the classical Yang-Baxter equation in the mock-pre-Lie algebra.
\end{cor}
}

\section{Factorizable mock-pre-Lie bialgebras and quadratic Rota-Baxter mock-pre-Lie algebras}\label{sec:IV}
In this section, we introduce the notions of  factorizable mock-pre-Lie bialgebras and quadratic Rota-Baxter mock-pre-Lie algebras of weight $\lambda$.  We establish a one-to-one correspondence between factorizable mock-pre-Lie bialgebras and quadratic Rota-Baxter mock-pre-Lie algebras of nonzero weight. Moreover, we show that quadratic Rota-Baxter mock-pre-Lie algebras of weight $0$ can give rise a triangular mock-pre-Lie bialgebra.

\begin{defi}
A quasi-triangular mock-pre-Lie bialgebra $(\g,\g_{r}^*)$ is called {\bf factorizable} if the skew-symmetric part $r_a$ of $r$ is non-degenerate, which means that the linear map $I:\g^* \longrightarrow \g$ defined in \eqref{sign-I} is a linear isomorphism of vector spaces.
\end{defi}

Consider the map
$$
\g^*\stackrel{r_+\oplus r_-}{\longrightarrow}\g\oplus \g\stackrel{(x,y)\longmapsto x-y}{\longrightarrow}\g.
$$

\begin{pro}
  Let  $(\g,\g_{r}^{\ast})$ be a factorizable mock-pre-Lie bialgebra. Then $\Img(r_+\oplus r_-)$ is a mock-pre-Lie subalgebra of the direct sum mock-pre-Lie algebra $(\g \oplus \g,\cdot_{\oplus})$, which is given by Example \ref{direct-sum-mock-pre}, and $r_+\oplus r_-: (\g^*,\cdot_r)\rightarrow\Img(r_+\oplus r_-)$ is a mock-pre-Lie algebra isomorphism. Moreover, any $x\in \g$ has a unique decomposition
  \begin{equation}
    x=x_+-x_-,
  \end{equation}
  where $(x_+,x_-)\in \Img(r_+\oplus r_-)$.
\end{pro}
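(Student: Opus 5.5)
By Theorem \ref{heart}, since $(\g,\g_r^*)$ is quasi-triangular, $(\g^*,\cdot_r)$ is a mock-pre-Lie algebra and both $r_+$ and $r_-$ are mock-pre-Lie algebra homomorphisms from $(\g^*,\cdot_r)$ to $(\g,\cdot_\g)$. The plan is to derive every claim of the proposition from this together with the bijectivity of $I=r_+-r_-$.

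First, consider $r_+\oplus r_-:\g^*\to\g\oplus\g$, $\xi\mapsto (r_+(\xi),r_-(\xi))$. The product in the direct sum mock-pre-Lie algebra of Example \ref{direct-sum-mock-pre} is componentwise. So for all $\xi,\eta\in\g^*$,
\[
(r_+\oplus r_-)(\xi\cdot_r\eta)=(r_+(\xi)\cdot_\g r_+(\eta),\,r_-(\xi)\cdot_\g r_-(\eta))=(r_+\oplus r_-)(\xi)\cdot_{\oplus}(r_+\oplus r_-)(\eta).
\]
Hence $r_+\oplus r_-$ is a homomorphism of mock-pre-Lie algebras. Its image $\Img(r_+\oplus r_-)$ is closed under $\cdot_\oplus$, so it is a mock-pre-Lie subalgebra of $(\g\oplus\g,\cdot_\oplus)$.

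Next, $r_+\oplus r_-$ is injective. If $r_+(\xi)=r_-(\xi)=0$, then $I(\xi)=r_+(\xi)-r_-(\xi)=0$, and factorizability (nondegeneracy of $r_a$, i.e.\ $I$ is a linear isomorphism) forces $\xi=0$. Therefore $r_+\oplus r_-:(\g^*,\cdot_r)\to\Img(r_+\oplus r_-)$ is a bijective homomorphism, i.e.\ an isomorphism of mock-pre-Lie algebras.

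For the decomposition, the map $\Img(r_+\oplus r_-)\to\g$, $(a,b)\mapsto a-b$, composed with the isomorphism $r_+\oplus r_-$, equals $I$, which is bijective. Hence $(a,b)\mapsto a-b$ restricted to the image is itself bijective. For existence, given $x\in\g$, set $\xi=I^{-1}(x)$, $x_+=r_+(\xi)$ and $x_-=r_-(\xi)$; then $x=x_+-x_-$ with $(x_+,x_-)\in\Img(r_+\oplus r_-)$. For uniqueness, if $(x_+,x_-)=(r_+\eta,r_-\eta)$ and $(x'_+,x'_-)=(r_+\eta',r_-\eta')$ both satisfy the decomposition, then $I(\eta)=I(\eta')$, so $\eta=\eta'$ and the two pairs agree.

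There is no real obstacle here. The only substantive input is Theorem \ref{heart}, which already provides the homomorphism property of $r_\pm$; all other steps are linear algebra using the invertibility of $I$.
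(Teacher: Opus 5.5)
Your proposal is correct and follows essentially the same route as the paper: Theorem \ref{heart} supplies the homomorphism property of $r_\pm$, and invertibility of $I$ gives injectivity of $r_+\oplus r_-$ and the unique decomposition $x_\pm=r_\pm I^{-1}(x)$. Your justification of the subalgebra claim, via $r_+\oplus r_-$ being a homomorphism into $(\g\oplus\g,\cdot_\oplus)$, is actually tighter than the paper's, which only observes that $\Img(r_+)$ and $\Img(r_-)$ are separately subalgebras.
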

\begin{proof}
By Theorem \ref{heart}, we have both $r_{+}$ and $r_{-}$ are mock-pre-Lie algebra homomorphisms, then $\Img(r_{+})$ and $\Img(r_{-})$ are mock-pre-Lie subalgebra of $\g.$
Consider the direct sum mock-pre-Lie algebra $(\g \oplus \g,\cdot_{\oplus})$, therefore we have $\Img(r_+\oplus r_-)$ is a mock-pre-Lie subalgebra of $(\g \oplus \g,\cdot_{\oplus}).$ 
Since $I=r_+  -r_-:\g^*\to \g$ is nondegenerate, it follows that $r_+\oplus r_-: (\g^*,\cdot_r)\rightarrow\Img(r_+\oplus r_-)$ is a mock-pre-Lie algebra isomorphism.

  Since $I:\g^*\to \g$ is nondegenerate, then for all $x\in \g,$ there exist a unique $\xi \in \g^*,$ such that
  $I(\xi)=(r_{+}-r_{-})(\xi)=x,$ thus we have
  $$
 r_+ I^{-1}(x)-r_-I^{-1}(x)=(r_+  -r_-)I^{-1}(x)=x,
  $$
  which implies that $x=x_+-x_-,$ where $x_+=r_+ I^{-1}(x)$ and $x_-=r_-I^{-1}(x)$.
The uniqueness also follows from the fact that $I:\g^*\rightarrow \g$ is nondegenerate.
\end{proof}

{\rm Let $(\g,\g^*)$ be a mock-pre-Lie bialgebra. Define a multiplication $\cdot_{\frkd}$ on $\frkd=\g\oplus \g^*$ by
\begin{equation}
(x,\xi)\cdot_{\frkd}(y,\eta)=\Big(  x\cdot_\g y-(\huaL^*_{\xi}+\huaR^*_{\xi})y+\huaR^*_{\eta}(x),\xi\cdot_{\g^*} \eta-(L_x^*+R_x^*)\eta+R_y^*(\xi) \Big), \quad \forall x,y\in \g,\xi,\eta\in \g^*.
\end{equation}
By Proposition \ref{pro-matched-pair-mock-pre}, $(\frkd,\cdot_{\frkd})$ is a mock-pre-Lie algebra, which is called the {\bf mock-pre-Lie double of the mock-pre-Lie bialgebra} $(\g,\g^*),$ and denoted by $\frkd=\g \bowtie_{-L^*-R^*,R^*}^{-\huaL^*-\huaR^*,\huaR^*} \g^*$.}

\begin{thm}\label{mock-pre-double-fac}
Let $(\g,\g^*)$ be a mock-pre-Lie bialgebra. Suppose that $\{ e_1,e_2,\dots,e_n\}$ is a basis of $\g$ and $\{ e_1^*,e_2^*,\dots,e_n^*\}$ is the dual basis of $\g^*$. Then $r=\sum_{i} e_i\otimes e_i^* \in \g\otimes \g^*  \subset \frkd \otimes \frkd$ induces a mock-pre-Lie algebra structure on $\frkd^*=\g^*\oplus \g$ such that $(\frkd,\frkd_r^*)$ is a quasi-triangular mock-pre-Lie bialgebra, where the mock-pre-Lie algebra structure $(\frkd_r^*,\cdot_{\frkd_r^*})$ is given by
\begin{equation}\label{frkd-r}
(\xi,x)\cdot_{\frkd_r^*} (\eta,y)=(\xi \cdot_{\g^*} \eta, x \cdot_\g y),\quad \forall x,y\in \g,\xi,\eta\in \g^*.
\end{equation}
Moreover, $(\frkd,\frkd_r^*)$ is also a factorizable mock-pre-Lie bialgebra.
\end{thm}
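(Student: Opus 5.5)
To show that $(\frkd,\frkd_r^*)$ is quasi-triangular, I will check the two defining conditions: the skew-symmetric part $r_a$ of $r$ is $(L,L+R)$-invariant in $\frkd$, and $[[r,r]]=0$ in $\frkd$. Theorem \ref{mock-preLiebia-condition} together with Proposition \ref{deduce-condition} then give that $\cdot_r$ is a mock-pre-Lie structure on $\frkd^*$ and that $(\frkd,\frkd^*_r)$ is a mock-pre-Lie bialgebra. The explicit formula \eqref{frkd-r} will come from Lemma \ref{skew-invariant-lem}, which expresses $\cdot_r$ through $r_s$ alone. Factorizability will come from computing $I=r_+-r_-$ directly.

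First I compute $r_\pm:\frkd^*=\g^*\oplus\g\to\frkd=\g\oplus\g^*$. Pairing $r=\sum_i e_i\otimes e_i^*$ with $(\xi,x),(\eta,y)$ gives $r((\xi,x),(\eta,y))=\langle\xi,y\rangle$, up to the fixed identification convention. Hence $r_+(\xi,x)=(0,\xi)$ (the $\g^*$-component) and $r_-(\xi,x)=(x,0)$. Therefore $I(\xi,x)=(-x,\xi)$, which is clearly a linear isomorphism; this settles factorizability once quasi-triangularity is known. Moreover $2(r_s)_+=r_++r_-$, i.e. $(r_s)_+(\xi,x)=\tfrac12(x,\xi)$. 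The skew part is $r_a=\tfrac12\sum_i(e_i\otimes e_i^*-e_i^*\otimes e_i)$, and $\tfrac12 I$ is its contraction. This is, up to sign, the form $\omega$ of \eqref{omega}.

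Next, the invariance. By Proposition \ref{invariance2}, I must show $I\circ L_X^*=-(L_X+R_X)\circ I$ for $X\in\frkd$. Since $I$ is essentially the isomorphism $\frkd^*\cong\frkd$ induced by $\omega$, this reduces to
\[
\omega(X\cdot_\frkd Y,Z)=\omega(Y,\{X,Z\}_c),
\]
which is exactly the invariance \eqref{quadratic-mock-pre} of the Manin triple $(\frkd,\g,\g^*)$ from the equivalence theorem for bialgebras. So this step amounts to carefully tracking signs and the identification.

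Then I verify $[[r,r]]=0$. Instead of expanding the tensor directly, I will use Theorem \ref{heart} in reverse through identity \eqref{eq-homomorphism}: $[[r,r]]=0$ holds iff $r_+$ is a homomorphism $(\frkd^*,\cdot_r)\to\frkd$. This is the step I expect to be the main obstacle, because \eqref{eq-homomorphism} was derived only from the invariance of $r_a$, yet its use also needs $\cdot_r$ computed explicitly. So I first compute $\cdot_r$ via \eqref{mockLie-induced-by-sym}, using $(r_s)_+(\xi,x)=\tfrac12(x,\xi)$ and the coregular actions of $\frkd$ on $\frkd^*$. Unwinding $L^*_{(x,\xi)}$ and $R^*_{(x,\xi)}$ on $\g^*\oplus\g$ with the double multiplication, the cross terms $\huaL^*,\huaR^*,L^*,R^*$ should cancel, leaving $(\xi\cdot_{\g^*}\eta,\,x\cdot_\g y)$, which is \eqref{frkd-r}. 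With this formula, $r_+(\xi,x)\cdot_\frkd r_+(\eta,y)=(0,\xi)\cdot_\frkd(0,\eta)=(0,\xi\cdot_{\g^*}\eta)=r_+((\xi,x)\cdot_r(\eta,y))$, and similarly for $r_-$ using that $\g$ is a subalgebra. Hence $[[r,r]](\cdot,\zeta,\cdot)=0$ for all $\zeta$, so $[[r,r]]=0$ and quasi-triangularity follows.

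If the cancellation in the computation of $\cdot_r$ resists, I will fall back on a direct expansion of $[[r,r]]$ with $r=\sum e_i\otimes e_i^*$. There I would evaluate each of the five terms on triples drawn from $\g^*\oplus\g$. Each nonvanishing component pairs a product in $\g$ or $\g^*$ with dual-basis sums, and these cancel by the definitions of $\huaL^*,\huaR^*,L^*,R^*$.
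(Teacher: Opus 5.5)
Your proposal is correct. It reaches $[[r,r]]=0$ by a genuinely different route from the paper.

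\textbf{The paper's route.} The paper checks the $(\tilde L,\tilde L+\tilde R)$-invariance of $r_a$ by writing out $\tilde L^*_{(x,\xi)}(\eta,y)$ and $\{(x,\xi),(r_a)_+(\eta,y)\}_{\frkc}$ explicitly. It then expands $[[r,r]]$ in the basis $\{e_i,e_i^*\}$ and splits it into its $\g\otimes\g^*\otimes\g^*$, $\g^*\otimes\g\otimes\g^*$ and $\g\otimes\g\otimes\g^*$ components. Each component vanishes by the definitions of $L^*$ and $\huaL^*$. Factorizability is read off from $I(\xi,x)=(-x,\xi)$, as in your plan.

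\textbf{Your route.} You observe that $I$ is the inverse of $Y\mapsto\omega(Y,\cdot)$. Hence the invariance is exactly the quadratic condition \eqref{quadratic-mock-pre} of the Manin triple $(\frkd,\g,\g^*)$, written in the form $L_X\circ I=-I\circ(L_X^*+R_X^*)$ of Proposition \ref{invariance2}. You then replace the tensor expansion by the homomorphism criterion \eqref{eq-homomorphism}.

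The step you flagged does go through. With $(r_s)_+(\xi,x)=\tfrac12(x,\xi)$,
\[
\langle(\xi,x)\cdot_r(\eta,y),(z,\zeta)\rangle=\tfrac12\big(\langle(\eta,y),\{(x,\xi),(z,\zeta)\}_{\frkc}\rangle-\langle(\xi,x),(z,\zeta)\cdot_{\frkd}(y,\eta)\rangle\big)=\langle\xi\cdot_{\g^*}\eta,z\rangle+\langle\zeta,x\cdot_{\g}y\rangle .
\]
The $\huaL^*,\huaR^*,L^*,R^*$ terms cancel in pairs. So $r_+$ and $r_-$ are projections onto a summand of $\g^*\oplus\g$ followed by inclusion of the subalgebras $\g^*$ and $\g$ of $\frkd$, hence homomorphisms.

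\textbf{What each approach buys.} Your route actually proves formula \eqref{frkd-r}, which the paper states but never derives in its proof. It also explains $[[r,r]]=0$ structurally: the induced dual structure is just the direct sum $\g^*\oplus\g$. By \eqref{eq-homomorphism}, checking $r_+$ alone already suffices.

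Its cost is reliance on \eqref{eq-homomorphism}, which appears only inside the proof of Theorem \ref{heart}; the paper's basis computation is self-contained. The identity is sound, however. After pairing with $\zeta$, the difference of its two sides collapses to a signed sum of two expressions $\langle\zeta,u\cdot I\theta\rangle+\langle\theta,\{u,I\zeta\}_c\rangle$, with $(u,\theta)=(r_+(\xi),\eta)$ and $(u,\theta)=(r_-(\eta),\xi)$. Each vanishes by \eqref{skew-invariant}.
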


\begin{proof}

We first prove that the skew-symmetric part $r_a=\frac{1}{2} \sum_{i} (e_i\otimes e_i^{\ast}-e_i^{\ast}\otimes e_i) $ of $r$ is $(\tilde{L},\tilde{L}+\tilde{R})$-invariant, where $\tilde{L},\tilde{R}$ are the left and right multiplication operators of the mock-pre-Lie algebra $(\frkd,\cdot_\frkd)$ respectively.  For all  $(\xi, x)\in \frkd^{\ast}$, we have $({r_a})_{+}(\xi, x)=\frac{1}{2} (-x,\xi)\in \frkd.$ Furthermore, by a direct calculation,  we have
\begin{eqnarray*}
\{(x,\xi),({r_a})_{+}(\eta,y)\}_{\frkc}
 &=& \frac{1}{2} \Big( -\{x,y\}_{c}-\huaL_{\eta}^*x+\huaL_{\xi}^*y,\{\xi,\eta\}_{c}-L_x^*\eta+L_y^* \xi \Big),\\
\tilde{L}^{\ast}_{(x,\xi)} (\eta,y)&=&\Big(-\{\xi,\eta\}_{c}+L_x^*\eta-L_y^* \xi,-\{x,y\}_{c}-\huaL_{\eta}^*x+\huaL_{\xi}^*y \Big).
\end{eqnarray*}
Thus we have
$$({r_a})_{+} \Big(\tilde{L}^{\ast}_{(x,\xi)}(\eta,y)\Big)+\{(x,\xi),({r_a})_{+}(\eta,y)\}_{\frkc}=0.$$
By Lemma \ref{skew-invariant-lem},  the skew-symmetric part $r_a$ of $r$ is  $(\tilde{L},\tilde{L}+\tilde{R})$-invariant. We also have
\begin{eqnarray*}
[[ r,r]] &=& \sum_{i,j}(-e_i\cdot_{\g} e_j\otimes e_j^* \otimes e_i^*-e_j^*\otimes e_i\cdot_{\g} e_j\otimes e_i^*+\{e_i, e_j^*\}_{\frkc}\otimes e_j \otimes e_i^*+e_i \otimes \{e_i^*,e_j\}_{\frkc} \otimes e_j^*\\
&&-e_i \otimes e_j\otimes \{e_i^*,e_j^*\}_{\frkc})\\
&=& \sum_{i,j}(-e_i\cdot_{\g} e_j\otimes e_j^* \otimes e_i^*-e_j^*\otimes e_i\cdot_{\g} e_j\otimes e_i^*-\huaL^*_{e_j^*} e_i\otimes e_j\otimes e_i^*-L^*_{e_i} e_j^*\otimes e_j\otimes e_i^*\\
&&-e_i\otimes L^*_{e_j} e_i^*\otimes e_j^*-e_i\otimes \huaL^*_{e_i^*} e_j\otimes e_j^*-e_i \otimes e_j\otimes \{e_i^*,e_j^*\}_{\frkc}).
\end{eqnarray*}
Note that
\begin{eqnarray*}
&&\sum_{i,j}(e_i\cdot_{\g} e_j\otimes e_j^* \otimes e_i^*+e_i\otimes L^*_{e_j} e_i^*\otimes e_j^*)=0;\\
&&\sum_{i,j}(e_j^*\otimes e_i\cdot_{\g} e_j\otimes e_i^*+L^*_{e_i} e_j^*\otimes e_j\otimes e_i^*)=0;\\
&&\sum_{i,j}(-\huaL^*_{e_j^*} e_i\otimes e_j\otimes e_i^*-e_i\otimes \huaL^*_{e_i^*} e_j\otimes e_j^*-e_i \otimes e_j\otimes \{e_i^*,e_j^*\}_{\frkc})=0.\\
\end{eqnarray*}
Thus $[[ r,r]] =0.$ Hence $(\frkd,\frkd^{\ast}_r)$ is a quasi-triangular mock-pre-Lie bialgebra.

Moreover, note that $r_{+},r_{-}:\frkd^{\ast} \longrightarrow
\frkd$ are given   respectively   by
$$ r_{+}(\xi,x)=(0,\xi),\quad r_{-}(\xi,x)=(x,0),\quad \forall x\in \g, \xi\in \g^*.$$
This implies that $I(\xi,x)=(-x,\xi)$, which means that the linear map $I:\frkd^{\ast} \longrightarrow \frkd$ is a linear isomorphism of vector spaces. Therefore, $(\frkd,\frkd^{\ast}_r)$ is a factorizable mock-pre-Lie bialgebra.
\end{proof}

\begin{defi}
Let $(\g, \cdot_{\g},B)$ be a Rota-Baxter mock-pre-Lie algebra of weight $\lambda$ and $(\g, \cdot_{\g},\omega)$ a quadratic mock-pre-Lie algebra. Then the quadruple $(\g, \cdot_{\g},B,\omega)$ is called a {\bf quadratic Rota-Baxter mock-pre-Lie algebra of weight $\lambda$} if the following compatibility condition holds:
\begin{eqnarray}\label{quadratic-Rota-mock-pre}
\omega(Bx,y)+\omega(x,By)+\lambda\omega(x,y)=0,\quad \forall x,y\in \g.
\end{eqnarray}
\end{defi}

The following theorem shows that a factorizable mock-pre-Lie bialgebra naturally gives rise to a quadratic Rota-Baxter mock-pre-Lie algebra of nonzero weight.

\begin{thm}\label{Factorizable-mock-Lie-algebra}
Let $(\g,\g_{r}^*)$ be a factorizable mock-pre-Lie bialgebra with $I=r_{+}-r_{-}$. Then $(\g,B,\omega_{I})$ is a quadratic Rota-Baxter mock-pre-Lie algebra of weight $\lambda$, where the linear map $B:\g \rightarrow \g$ and $\omega_{I}\in \wedge^{2} \g^*$ are defined respectively by
\begin{eqnarray}
\label{B}B&=&\lambda r_{-}\circ I^{-1},\\
\label{SI}\omega_I(x,y)&=&\langle I^{-1}x,y \rangle, \quad \forall x,y \in \g.
\end{eqnarray}
\end{thm}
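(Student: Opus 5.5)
We need to prove three things: $\omega_I$ is a nondegenerate skew-symmetric invariant form, $B$ is a Rota-Baxter operator of weight $\lambda$, and the compatibility condition \eqref{quadratic-Rota-mock-pre} holds.

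\emph{The form $\omega_I$.} Since $I^*=-I$, we also have $(I^{-1})^*=-I^{-1}$. This gives skew-symmetry of $\omega_I$, and nondegeneracy follows because $I$ is an isomorphism. For invariance \eqref{quadratic-mock-pre}, write $x=I\xi$, $y=I\eta$, $z=I\zeta$. By Proposition \ref{invariance2}, $L_x\circ I=-I\circ(L^*_x+R^*_x)$ and $I\circ L^*_x=-(L_x+R_x)\circ I$. Then
\[
\omega_I(x\cdot_\g y,z)=\langle I^{-1}(L_x I\eta),z\rangle=-\langle (L^*_x+R^*_x)\eta,z\rangle=\langle \eta,\{x,z\}_c\rangle .
\]
On the other side, $\omega_I(y,\{x,z\}_c)=\langle \eta,\{x,z\}_c\rangle$ by definition, so the two sides agree.

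\emph{Compatibility.} Substitute $x=I\xi$, $y=I\eta$. Then $\omega_I(Bx,y)=\lambda\langle r_-\xi,\eta\rangle=\lambda r(\eta,\xi)$ and $\omega_I(x,By)=-\omega_I(By,x)=-\lambda r(\xi,\eta)$. Also $\lambda\omega_I(x,y)=\lambda\langle\xi,I\eta\rangle=\lambda(r(\eta,\xi)-r(\xi,\eta))$. Checking signs against the convention $\langle r_+\xi,\eta\rangle=r(\xi,\eta)$, the three terms should combine to $0$. If the sign comes out wrong, I would recheck the pairing convention; the paper's identity $B=\lambda r_-I^{-1}$ together with $r_+=r_-+I$ should force the sum to vanish.

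\emph{Rota-Baxter identity (main step).} Write $x=I\xi$, $y=I\eta$, so $Bx=\lambda r_-\xi$ and $By=\lambda r_-\eta$. By Theorem \ref{heart}, $r_-$ is a homomorphism, so $(Bx)\cdot_\g(By)=\lambda^2 r_-(\xi\cdot_r\eta)$. It therefore suffices to show
\[
\lambda I(\xi\cdot_r\eta)=(r_-\xi)\cdot_\g I\eta+I\xi\cdot_\g r_-\eta+I\xi\cdot_\g I\eta,
\]
after which we apply $\lambda r_- I^{-1}$ to both sides. Using $r_+=r_-+I$, the right-hand side equals $r_+\xi\cdot r_+\eta-r_-\xi\cdot r_-\eta$, which is $r_+(\xi\cdot_r\eta)-r_-(\xi\cdot_r\eta)=I(\xi\cdot_r\eta)$, again by Theorem \ref{heart}. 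This leaves a stray factor of $\lambda$. The intended normalization is likely $B(\cdot)$ with weight $\lambda$ meaning the RB equation scaled appropriately: with $B=\lambda r_-I^{-1}$, we get $Bx\cdot By=\lambda^2 r_-(\xi\cdot_r\eta)$, while $B(Bx\cdot y+x\cdot By+\lambda x\cdot y)=\lambda r_-I^{-1}\bigl(\lambda(r_+\xi\cdot r_+\eta-r_-\xi\cdot r_-\eta)\bigr)=\lambda^2 r_-(\xi\cdot_r\eta)$. The factor is thus consistent once the $\lambda$ in each term is tracked, so the identity holds.

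The main obstacle is this bookkeeping: expanding $Bx\cdot y+x\cdot By+\lambda x\cdot y$ in terms of $r_\pm$ so that it telescopes to $\lambda(r_+\xi\cdot r_+\eta-r_-\xi\cdot r_-\eta)$, and fixing the signs of the pairing.
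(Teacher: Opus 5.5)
Your route is the paper's. You get the Rota--Baxter identity from $r_+=r_-+I$ and the homomorphism property of $r_\pm$ (Theorem \ref{heart}), invariance of $\omega_I$ from Proposition \ref{invariance2}, and skew-symmetry from $I^*=-I$. Your skew-symmetry, nondegeneracy and invariance arguments are correct.

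In the Rota--Baxter part, the reduction you first state (``it suffices to show $\lambda I(\xi\cdot_r\eta)=\dots$'') is misstated. The identity that holds, and that you need, is $I(\xi\cdot_r\eta)=(r_-\xi)\cdot_{\g} I\eta+I\xi\cdot_{\g} r_-\eta+I\xi\cdot_{\g} I\eta$. The factor $\lambda$ comes out of $Bx\cdot_{\g} y+x\cdot_{\g} By+\lambda x\cdot_{\g} y$, not from the left-hand side. Your final displayed chain is nonetheless correct and is exactly the paper's computation, so this only needs cleaning up.

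The genuine gap is the compatibility condition \eqref{quadratic-Rota-mock-pre}: you never verify it. With the values you wrote, the sum is $\lambda r(\eta,\xi)-\lambda r(\xi,\eta)+\lambda\bigl(r(\eta,\xi)-r(\xi,\eta)\bigr)=2\lambda\langle\xi,I\eta\rangle=2\lambda\,\omega_I(x,y)$, which is not zero. Saying the terms ``should'' cancel and deferring to a recheck of conventions does not close this.

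The error is in the first two terms. You implicitly used $\langle I^{-1}u,I\eta\rangle=\langle u,\eta\rangle$, but since $I^*=-I$ one has $\langle I^{-1}u,I\eta\rangle=-\langle u,\eta\rangle$. It is cleanest to use only the definition of $\omega_I$ and its skew-symmetry: $\omega_I(I\xi,u)=\langle\xi,u\rangle$ and $\omega_I(u,I\eta)=-\langle\eta,u\rangle$. With $x=I\xi$, $y=I\eta$ this gives
\[
\omega_I(Bx,y)=-\lambda\langle\eta,r_-\xi\rangle=-\lambda r(\eta,\xi),\qquad \omega_I(x,By)=\lambda\langle\xi,r_-\eta\rangle=\lambda r(\xi,\eta).
\]
Together with $\lambda\omega_I(x,y)=\lambda\langle\xi,I\eta\rangle=\lambda\bigl(r(\eta,\xi)-r(\xi,\eta)\bigr)$, the three terms sum to zero. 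The paper does the same computation in operator form, using $r_-^*=r_+$ to write the sum as $\lambda\langle I^{-1}(-r_++r_-+I)I^{-1}x,y\rangle=0$. With this sign corrected, your proof is complete.
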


\begin{proof}
Since $r_{+},r_{-}:(\g^*,\cdot_{r}) \longrightarrow (\g,\cdot_{\g})$ are both mock-pre-Lie algebra homomorphisms, for all $x,y\in \g$, we have
\begin{eqnarray}\label{RB operator in fac}
I(I^{-1}x \cdot_{r} I^{-1}y )&=&(r_{+}-r_{-})(I^{-1}x \cdot_{r} I^{-1}y )\\
\nonumber&=&((I+r_{-})(I^{-1}x \cdot_{r} I^{-1}y )-r_{-}(I^{-1}x \cdot_{r} I^{-1}y )\\
\nonumber&=&((I+r_{-})I^{-1}x) \cdot_{\g} ((I+r_{-})I^{-1}y)- (r_{-}I^{-1}x)\cdot_{\g} (r_{-}I^{-1}y)\\
\nonumber&=& (r_{-}I^{-1}x) \cdot_{\g} y+x \cdot_{\g} (r_{-}I^{-1}y)+x \cdot_{\g} y.
\end{eqnarray}
Therefore, we have
\begin{eqnarray*}
B(x)\cdot_{\g}B(y)&=&\lambda^{2}(r_{-}I^{-1}x \cdot_{\g} r_{-}I^{-1}y )\\
&=&\lambda^{2}r_{-}(I^{-1}x \cdot_{r} I^{-1}y )\\
&=& \lambda^{2} r_{-}I^{-1}\Big(x \cdot_{\g} (r_{-}I^{-1}y)+(r_{-}I^{-1}x)\cdot_{\g} y+x \cdot_{\g} y \Big)\\
&=&B(B(x)\cdot_{\g} y+x\cdot_{\g}B(y) +\lambda x\cdot_{\g} y),
\end{eqnarray*}
which implies that $B$ is a Rota-Baxter operator of weight $\lambda$ on $\g$.

Next we show that $(\g,\cdot_{\g},B,\omega_{I})$ is a quadratic Rota-Baxter mock-pre-Lie algebra. Since $I^{*}=-I$, we have
$$ \omega_{I}(x,y)=\langle I^{-1}x,y \rangle=-\langle x,I^{-1}y \rangle=-\omega_{I}(y,x),$$
 which means that $ \omega_{I}$ is skew-symmetric.

Since the skew-symmetric part $r_{a}$ of $r$ is $(L,L+R)$-invariant, by Proposition \ref{invariance2},
we can deduce that $ I^{-1} \circ L_x =-(L_x^*+R_x^*)\circ I^{-1}$. Thus
\begin{eqnarray*}
\omega_{I}(x\cdot_{\g} y,z)-\omega_{I}(y,\{x,z\}_{c})
&=&\langle I^{-1}(x\cdot_{\g} y),z  \rangle-\langle I^{-1}y,\{x,z\}_{c} \rangle\\
&=&\langle I^{-1}\circ(L_{x}y)+(L^*_{x}+R^*_{x}) \circ (I^{-1}y),z \rangle=0,
\end{eqnarray*}
which implies that \eqref{quadratic-mock-pre} holds.

Moreover, by using $r^{\ast}_{-}=r_{+}$ and $I=r_{+}-r_{-}$, we have
\begin{eqnarray*}
\omega_{I}(x,By)+\omega_{I}(Bx,y)+\lambda \omega_{I}(x,y)&=&\lambda\Big(\langle I^{-1}(x), r_{-}I^{-1}(y) \rangle+\langle  I^{-1} r_{-} I^{-1}(x),y \rangle+\langle I^{-1}x,y \rangle \Big)\\
&=& \lambda\langle(-I^{-1} r_{+} I^{-1}+I^{-1} r_{-} I^{-1}+ I^{-1})(x),y \rangle=0,
\end{eqnarray*}
which implies that \eqref{quadratic-Rota-mock-pre} holds.

Therefore, $(\g,\cdot_{\g},B,\omega_{I})$ is a quadratic Rota-Baxter mock-pre-Lie algebra of weight $\lambda$.
\end{proof}
\emptycomment{
\begin{cor}
Let $B:\g\rightarrow\g$ is a Rota-Baxter operator of weight $\lambda$ on a mock-pre-Lie algebra $(\g,\cdot_{\g})$. Then
\begin{eqnarray}
\widetilde{B}:=-\lambda{\Id}-B
\end{eqnarray}
is also a Rota-Baxter operator of weight $\lambda$.
\end{cor}

\begin{proof}
For all $x,y\in \g,$ we have
\begin{eqnarray*}
&&\widetilde{B}(x)\cdot_{\g}\widetilde{B}(y)-\widetilde{B}\Big(\widetilde{B}(x)\cdot_{\g}y+x\cdot_{\g}\widetilde{B}(y)+\lambda x\cdot_{\g}y\Big)\\
&=&(-\lambda{\Id}-B)(x)\cdot_{\g}(-\lambda{\Id}-B)(y)-(-\lambda{\Id}-B)\Big((-\lambda{\Id}-B)(x)\cdot_{\g}y+x\cdot_{\g}(-\lambda{\Id}-B)(y)+\lambda x\cdot_{\g}y\Big)\\
&=&\lambda^2x\cdot_{\g}y+\lambda x\cdot_{\g}B(y)+\lambda B(x)\cdot_{\g}y+B(x)\cdot_{\g}B(y)-\lambda B(x)\cdot_{\g}y\\
&&-\lambda^2x\cdot_{\g}y-
\lambda x\cdot_{\g}B(y)-B(B(x)\cdot_{\g}y)-B(x\cdot_{\g}B(y))-\lambda B(x\cdot_{\g}y)\\
&=&0.
\end{eqnarray*}
The proof is finished.
\end{proof}

\begin{cor}
Let $(\g,\g_{r}^*)$ be a factorizable mock-pre-Lie bialgebra with $I=r_{+}-r_{-}$. Then $(\g,\cdot_{\g},\widetilde{B},\omega_{I})$ is also a quadratic Rota-Baxter mock-pre-Lie algebra of weight $\lambda$, where $\widetilde{B}=-\lambda{\Id}-B=-\lambda r_{+}\circ I^{-1}$ and $\omega_{I}\in \wedge^{2} \g^*$ is defined by
\eqref{SI}.
\end{cor}
\begin{proof}
Since $(\g,\g_{r}^*)$ is a factorizable mock-pre-Lie bialgebra,  by Theorem \ref{Factorizable-mock-Lie-algebra}, $B$ satisfies the compatibility  condition \eqref{quadratic-Rota-mock-pre}.  Thus we have
\begin{eqnarray*}
&&\omega_{I}(x,\widetilde{B}y)+\omega_{I}(\widetilde{B}x,y)+\lambda \omega_{I}(x,y)\\
&=& -\lambda\omega_{I}(x,y)-\omega_{I}(x,By)-\lambda \omega_{I}(x,y)-\omega_{I}(Bx,y)+\lambda \omega_{I}(x,y)\\
&=& -\omega_{I}(x,By)-\omega_{I}(Bx,y)-\lambda\omega_{I}(x,y)\\
&=& 0.
\end{eqnarray*}
This implies that $(\g,\widetilde{B},\omega_{I})$ is  a quadratic Rota-Baxter mock-pre-Lie algebra of weight $\lambda$.
\end{proof}

\begin{cor}\label{mock-pre-Lie-bialgebra-isomorphism}
Let $(\g,\g_{r}^*)$ be a factorizable mock-pre-Lie bialgebra with $I=r_{+}-r_{-}$ and $B=\lambda r_{-}\circ I^{-1}$  the induced Rota-Baxter operator of weight $\lambda$. Then $((\g_{B},\cdot_{B}),(\g^*,\cdot_{I}))$ is a mock-pre-Lie bialgebra, where
$$ \xi \cdot_{I} \eta:= -\lambda I^{-1}\Big((\frac{1}{\lambda} I \xi)\cdot_{\g}(\frac{1}{\lambda} I \eta)\Big),\quad \forall \xi,\eta\in \g^*,~\lambda\neq 0.  $$
Moreover, $\frac{1}{\lambda} I:\g^* \longrightarrow \g$ gives a mock-pre-Lie bialgebra isomorphism from $(\g_{r}^*,\g)$ to $((\g_{B},\cdot_{B}),(\g^*,\cdot_{I}))$.
\end{cor}
\begin{proof}
By Example \ref{mock-pre-Lie-example}, we know that if $(\g,\g_{r}^*)$ is a mock-pre-Lie bialgebra, then $(\g_{r}^*,\g)$ is also a mock-pre-Lie bialgebra.
First we show that $\frac{1}{\lambda} I:\g_{r}^* \longrightarrow \g_{B}$ is a mock-pre-Lie algebra isomorphism. In fact, for any $\xi,\eta \in \g^*,$ taking $x=I\xi\in A$ and $y=I\eta \in A$, by \eqref{RB operator in fac}, we have
\begin{eqnarray*}
\frac{1}{\lambda}I (\xi\cdot_{r}\eta)=\frac{1}{\lambda^{2}}( BI\xi\cdot_{\g} I\eta+I\xi\cdot_{\g} BI\eta   +\lambda I\xi\cdot_{\g} I\eta)= (\frac{1}{\lambda}I\xi)\cdot_{B} (\frac{1}{\lambda}I\eta).
\end{eqnarray*}
So $\frac{1}{\lambda} I$ is a mock-pre-Lie algebra isomorphism.

Since $(\frac{1}{\lambda}I)^*=-\frac{1}{\lambda}I$, we have
\begin{eqnarray*}
(\frac{1}{\lambda}I)^{\ast} (\xi \cdot_{I} \eta)=\Big((-\frac{1}{\lambda} I \xi)\cdot_{\g} (-\frac{1}{\lambda} I \xi)\Big)=(\frac{1}{\lambda}I)^*(\xi)\cdot_{\g} (\frac{1}{\lambda}I)^*(\eta),
\end{eqnarray*}
which means that $(\frac{1}{\lambda}I)^*=-\frac{1}{\lambda}I:(\g^*,\cdot_{I})\longrightarrow (\g,\cdot_{\g})$ is also a mock-pre-Lie algebra isomorphism. Since $(\g_{r}^*,\g)$ is a mock-pre-Lie bialgebra, then pair $((\g_{B},\cdot_{B}),(\g^*,\cdot_{I}))$ is also a mock-pre-Lie bialgebra. Obviously, $\frac{1}{\lambda} I$ is a mock-pre-Lie bialgebra isomorphism.
\end{proof}}

At the end of this section, we show that a quadratic Rota-Baxter mock-pre-Lie algebra of weight $\lambda$ can give rise to a triangular  mock-pre-Lie bialgebra and  factorizable mock-pre-Lie bialgebra, respectively .

\begin{thm}\label{thm:QRB-tri and fac-mock-pre}
    Let $(\g,B,\omega)$ be a quadratic Rota-Baxter mock-pre-Lie algebra of weight $\lambda$  and
    $ \huaI_{\omega}: \g^*\longrightarrow \g $ the induced linear isomorphism given by $\langle\huaI_{\omega}^{-1}x,y \rangle :=\omega(x,y).$ Then $r \in \g\otimes \g $ defined by
  \begin{equation}\label{eq:equiv} r^{B,\omega}_{+}:= B\circ \huaI_{\omega}:\g^* \rightarrow \g, \quad
r^{B,\omega}_{+}(\xi)=r^{B,\omega}(\xi,\cdot), \quad \forall \xi\in \g^*,
\end{equation}
  satisfies $[[r,r]]=0$, and gives rise to a triangular (if $\lambda=0$) or a factorizable (if $\lambda\neq 0$) mock-pre Lie bialgebra $(\g,\g_{r^{B,\omega}}^*)$.
\end{thm}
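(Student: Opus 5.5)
The plan is to reverse the construction of Theorem \ref{Factorizable-mock-Lie-algebra} and then apply Theorem \ref{heart}.

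\textbf{Step 1: symmetric and skew-symmetric parts of $r$.} Write $r=r^{B,\omega}$ with $r_+=B\circ\huaI_\omega$. The map $\huaI_\omega$ satisfies $\huaI_\omega^*=-\huaI_\omega$, since $\omega$ is skew-symmetric. From \eqref{quadratic-Rota-mock-pre} I expect
\[
B^*\huaI_\omega^{-1}=-\huaI_\omega^{-1}(B+\lambda\Id),
\]
hence $r_-=r_+^*=-(B+\lambda\Id)\huaI_\omega$. It follows that
\[
I=r_+-r_-=(2B+\lambda\Id)\huaI_\omega, \qquad r_s=\tfrac12(r_++r_-)=-\tfrac{\lambda}{2}\huaI_\omega .
\]
This needs a sign check. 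If instead $I=\lambda\huaI_\omega$ and $(r_s)_+=(B+\tfrac{\lambda}{2})\huaI_\omega$ (depending on conventions), then $I$ is an isomorphism exactly when $\lambda\neq0$, and $r$ is symmetric when $\lambda=0$. That is the expected dichotomy, so I will fix the convention by requiring $(r_a)_+=\tfrac{\lambda}{2}\huaI_\omega$, i.e.\ $I=\lambda\huaI_\omega$. Indeed, \eqref{quadratic-Rota-mock-pre} says $\omega(Bx,y)+\omega(x,By)=-\lambda\omega(x,y)$, which gives $r_+-r_-=\pm\lambda\huaI_\omega$ once the transpose is computed carefully.

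\textbf{Step 2: invariance of $r_a$.} By Proposition \ref{invariance2}, it suffices to show
\[
\huaI_\omega\circ L_x^*=-(L_x+R_x)\circ\huaI_\omega .
\]
Pairing with $\xi$ and writing $u=\huaI_\omega\xi$, this identity translates directly into the invariance \eqref{quadratic-mock-pre} of $\omega$. This mirrors the computation in the proof of Theorem \ref{Factorizable-mock-Lie-algebra}, read backwards.

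\textbf{Step 3: $[[r,r]]=0$.} This is the main obstacle. Theorem \ref{heart} characterizes $[[r,r]]=0$ as $(\g^*,\cdot_r)$ being mock-pre-Lie with $r_\pm$ homomorphisms, but it is cleaner to use identity \eqref{eq-homomorphism} from its proof:
\[
r_+(\xi\cdot_r\eta)-r_+\xi\cdot_\g r_+\eta=-[[r,r]](\xi,\cdot,\eta),
\]
which holds whenever $r_a$ is invariant. So it suffices to show that $r_+$ is multiplicative for $\cdot_r$, given by \eqref{new-dual-space-mockLie}.

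To do this, substitute $\xi=\huaI_\omega^{-1}x$ and $\eta=\huaI_\omega^{-1}y$. Using Step 2 together with the invariance relations $\huaI_\omega L_x^*\huaI_\omega^{-1}=-(L_x+R_x)$ and the analogous relation for $R^*$ (derived from \eqref{quadratic-mock-pre-2}), I expect to rewrite
\[
\huaI_\omega(\xi\cdot_r\eta)=Bx\cdot_\g y+x\cdot_\g By+\lambda\,x\cdot_\g y
\]
up to sign conventions. Then
\[
r_+(\xi\cdot_r\eta)=B(Bx\cdot y+x\cdot By+\lambda x\cdot y)=Bx\cdot By=r_+\xi\cdot r_+\eta
\]
by the Rota-Baxter identity.

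The delicate part of this step is the term $R^*_{r_-(\eta)}\xi$ and the $\lambda$-shift in $r_-=r_+-I$. I would first verify the formula for $\huaI_\omega(\xi\cdot_r\eta)$ by pairing both sides with an arbitrary $z$ via $\omega$ and using \eqref{quadratic-mock-pre}.

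\textbf{Step 4: conclusion.} With $[[r,r]]=0$ and $r_a$ invariant, Theorem \ref{mock-preLiebia-condition} combined with Proposition \ref{deduce-condition} (condition (i) reduces to $r_a$, since $r-\tau(r)=2r_a$) shows that $(\g,\g^*_r)$ is a quasi-triangular mock-pre-Lie bialgebra. If $\lambda=0$, then $I=0$, so $r$ is symmetric and the bialgebra is triangular. If $\lambda\neq0$, then $I=\lambda\huaI_\omega$ is an isomorphism, so the bialgebra is factorizable.
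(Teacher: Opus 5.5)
Your route is the paper's. You compute $r_-$ from \eqref{quadratic-Rota-mock-pre}, obtain invariance of $r_a$ from Proposition \ref{invariance2} and the invariance of $\omega$, show that $\huaI_\omega$ intertwines $\cdot_r$ with $\cdot_B$, and conclude via Theorem \ref{heart}. Using \eqref{eq-homomorphism} with $r_+$ alone, and then Theorem \ref{mock-preLiebia-condition} with Proposition \ref{deduce-condition}, is just the converse half of that theorem unpacked. Steps 2 and 4 are fine.

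The problem is Step 1, and it is not a matter of convention. The map $r_-$ is fixed by $\langle r_-(\xi),\eta\rangle=r(\eta,\xi)$, i.e.\ $r_-=r_+^*$. Your first computation dropped the sign in $\huaI_\omega^*=-\huaI_\omega$. From $\huaI_\omega B^*=-(B+\lambda\Id)\huaI_\omega$ one gets
\[
r_-=(B\huaI_\omega)^*=\huaI_\omega^*B^*=-\huaI_\omega B^*=(B+\lambda\Id)\huaI_\omega ,
\]
so $I=r_+-r_-=-\lambda\huaI_\omega$, $(r_a)_+=-\tfrac{\lambda}{2}\huaI_\omega$ and $(r_s)_+=(B+\tfrac{\lambda}{2})\huaI_\omega$. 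The normalization you then ``fixed'', $I=\lambda\huaI_\omega$, has the wrong sign. It is also inconsistent with the $(r_s)_+$ you list beside it, since $(r_s)_+=r_+-\tfrac12 I$.

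The sign is harmless for the invariance in Step 2 and for the triangular/factorizable dichotomy. Step 3, however, depends on it exactly. Put $x=\huaI_\omega\xi$ and $y=\huaI_\omega\eta$, and use $\huaI_\omega(L_z^*+R_z^*)=-L_z\huaI_\omega$ and $\huaI_\omega R_z^*=R_z\huaI_\omega$. Then $\huaI_\omega\big(-(L^*_{Bx}+R^*_{Bx})\eta\big)=Bx\cdot_\g y$ and $\huaI_\omega(R^*_{r_-(\eta)}\xi)=x\cdot_\g r_-(\eta)=x\cdot_\g By+\lambda\, x\cdot_\g y$, which sum to $x\cdot_B y$. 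With your choice $I=\lambda\huaI_\omega$ you would have $r_-=(B-\lambda\Id)\huaI_\omega$. The $\lambda$-term would then enter with the wrong sign, and $r_+$ would not be multiplicative. Once the sign is corrected, your argument goes through as planned.
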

\begin{proof}
    Since $\omega$ is skew-symmetric, let $\huaI_{\omega}^{-1}x=\xi,$ we have
 \begin{eqnarray*}
  \langle \huaI_{\omega}^*\eta,\xi\rangle=\langle \eta,\huaI_{\omega}(\xi)\rangle
  =\omega(\huaI_{\omega}(\eta),\huaI_{\omega}(\xi))=-\omega(\huaI_{\omega}(\xi),\huaI_{\omega}(\eta))=-\langle \xi,\huaI_{\omega}(\eta)\rangle,
 \end{eqnarray*}
   which means $\huaI_{\omega}=-\huaI_{\omega}^{\ast}$.

{\bf Case (1):} When $\lambda=0,$ by the fact that $\omega(x,By)+\omega(Bx,y)=0$  for all $x,y\in \g$, we have
    $$ \langle \huaI_{\omega}^{-1}x,By\rangle+\langle \huaI_{\omega}^{-1}\circ B(x),y \rangle=0, $$
    which implies that $B^*\circ \huaI_{\omega}^{-1}+\huaI_{\omega}^{-1}\circ B=0,$ and then
    $$ \huaI_{\omega}\circ B^*+B\circ \huaI_{\omega}=0.$$
We have
    $$ r^{B,\omega}_{-}:=(r^{B,\omega}_{+})^{\ast}=-\huaI_{\omega}\circ B^*= B\circ \huaI_{\omega}=r^{B,\omega}_{+}. $$
  Thus $r^{B,\omega}\in \g\otimes\g$ is symmetric. Define a multiplication $\cdot_{r^{B,\omega}}$ on $\g^*$ by
  \begin{eqnarray}\label{dualspace-mock-preLie}
   \xi \cdot_{r^{B,\omega}} \eta= -(L_{r^{B,\omega}_{+}(\xi)}^*+R_{r^{B,\omega}_{+}(\xi)}^*)\eta+R_{r^{B,\omega}_{-}(\eta)}^* \xi.
  \end{eqnarray}
By the fact that $\omega(x\cdot_{\g} y,z)-\omega(y,\{x,z\}_{c})=0$ for all $x,y,z\in \g$, we have
$$ \langle \huaI_{\omega}^{-1}\circ L_x(y),z \rangle+\langle (L_{x}^*+R_{x}^*)\circ \huaI_{\omega}^{-1}(y),z \rangle=0, $$
which implies that  $ L_x\circ \huaI_{\omega}=-\huaI_{\omega} \circ (L_{x}^*+R_{x}^*).$ For all $\xi,\eta\in \g^*$, we have
\begin{eqnarray*}
    \huaI_{\omega}(\xi\cdot_{r^{B,\omega}} \eta)&=&\huaI_{\omega}\Big(-(L_{r^{B,\omega}_+(\xi)}^*+R_{r^{B,\omega}_+(\xi)}^*)\eta+R_{r^{B,\omega}_{-}(\eta)}^* \xi \Big)\\
    &=& L_{r^{B,\omega}_{+}(\xi)} \circ \huaI_{\omega}(\eta)+\huaI_{\omega}\circ R_{r^{B,\omega}_{-}(\eta)}^{\ast}(\xi)\\
    &=& L_{r^{B,\omega}_{+}(\xi)} \circ \huaI_{\omega}(\eta)+ R_{r^{B,\omega}_{-}(\eta)}\circ \huaI_{\omega}(\xi)\\
    &=&r^{B,\omega}_+(\xi)\cdot_{\g}\huaI_{\omega}(\eta) +\huaI_{\omega}(\xi)\cdot_{\g} r^{B,\omega}_+(\eta)\\
     &=&(B\huaI_{\omega}\xi)\cdot_{\g}(\huaI_{\omega}\eta)+(\huaI_{\omega}\xi)\cdot_{\g}(B\huaI_{\omega}\eta)\\
     &=&(\huaI_{\omega}\xi)\cdot_{B}(\huaI_{\omega}\eta),
\end{eqnarray*}
which implies that
\begin{eqnarray*}
        \huaI_{\omega}(\xi \cdot_{r^{B,\omega}} \eta)=(\huaI_{\omega}\xi)\cdot_B(\huaI_{\omega}\eta).
 \end{eqnarray*}
 Thus $\cdot_{r^{B,\omega}}$ is a mock-pre-Lie algebra multiplication and $\huaI_{\omega}$ is a mock-pre-Lie algebra isomorphism from $(\g^*,\cdot_{r^{B,\omega}})$ to $(\g,\cdot_B)$. By $B:(\g,\cdot_B)\longrightarrow (\g,\cdot_{\g})$ is a mock-pre-Lie algebra homomorphism, we have
$$   r^{B,\omega}_{+}=r^{B,\omega}_{-}=B\circ \huaI_{\omega}:(\g^*,\cdot_{r^{B,\omega}}) \longrightarrow (\g,\cdot_{\g})$$
    are both mock-pre-Lie algebra homomorphisms. Therefore, by Theorem \ref{heart}, we have $ [[r,r]] =0$ and $(\g,\g_{r^{B,\omega}}^*)$ is a triangular mock-pre-Lie bialgebra.

{\bf Case (2):} When $\lambda\neq0,$ by the fact that $\omega(x,By)+\omega(Bx,y)+\lambda \omega(x,y)=0$  for all $x,y\in \g$, we have
    $$ \langle \huaI_{\omega}^{-1}x,By\rangle+\langle \huaI_{\omega}^{-1}\circ B(x),y \rangle+\lambda\langle \huaI_{\omega}^{-1}x,y \rangle=0, $$
    which implies that $B^*\circ \huaI_{\omega}^{-1}+\huaI_{\omega}^{-1}\circ B+\lambda \huaI_{\omega}^{-1}=0, $ and then
    $$ \huaI_{\omega}\circ B^*+B\circ \huaI_{\omega}+\lambda \huaI_{\omega}=0.$$
Thus we have
    $$ r^{B,\omega}_{-}:=(r^{B,\omega}_{+})^{\ast}=-\huaI_{\omega}\circ B^*=B\circ \huaI_{\omega}+\lambda \huaI_{\omega}, $$
    and $-\lambda\huaI_{\omega}=r^{B,\omega}_{+}-r^{B,\omega}_{-}$. Define a multiplication $\cdot_{r^{B,\omega}} $ on $\g^*$ by \eqref{dualspace-mock-preLie}.
By the fact that $\omega(x\cdot_{\g} y,z)-\omega(y,\{x,z\}_{c})=0$ for all $x,y,z\in \g$, we have
$$ \langle \huaI_{\omega}^{-1}\circ L_x(y),z \rangle+\langle (L_{x}^*+R_{x}^*)\circ \huaI_{\omega}^{-1}(y),z \rangle=0, $$
which implies that  $ L_x\circ \huaI_{\omega}=-\huaI_{\omega} \circ (L_{x}^*+R_{x}^*).$ Therefore, by Proposition
\ref{invariance2}, the skew-symmetric part $r_a$ of $r$ is $(L,L+R)$-invariant.
By the same method as in Case (1), we have
\emptycomment{
On the one hand, for all $\xi,\eta\in \g^*$, we have
\begin{eqnarray*}
    \huaI_{\omega}(\xi\cdot_{r^{B,\omega}} \eta)&=&\huaI_{\omega}\Big(-(L_{r^{B,\omega}_{+}(\xi)}^*+R_{r^{B,\omega}_{+}(\xi)}^*)\eta+R_{r^{B,\omega}_{-}(\eta)}^* \xi \Big)\\
    &=& L_{r^{B,\omega}_{+}(\xi)} \circ \huaI_{\omega}(\eta)+\huaI_{\omega}\circ R_{r^{B,\omega}_{-}(\eta)}^{\ast}(\xi)\\
    &=&(-\frac{1}{\lambda})(r^{B,\omega}_{+}(\xi)\cdot_{\g} r^{B,\omega}_{+}(\eta)-r_{-}(\xi)\cdot_{\g} r^{B,\omega}_{-}(\eta)).
\end{eqnarray*}
On the other hand, we have
\begin{eqnarray*}
    &&(\huaI_{\omega}\xi)\cdot_{B}(\huaI_{\omega}\eta)\\
    &=&(B\huaI_{\omega}\xi)\cdot_{\g}(\huaI_{\omega}\eta)+(\huaI_{\omega}\xi)\cdot_{\g}(B\huaI_{\omega}\eta)
    +\lambda(\huaI_{\omega}\xi)\cdot_{\g}(\huaI_{\omega}\eta)\\
    &=&(-\frac{1}{\lambda})(r^{B,\omega}_{+}(\xi)\cdot_{\g} r^{B,\omega}_{+}(\eta)-r^{B,\omega}_{-}(\xi)\cdot_{\g} r^{B,\omega}_{-}(\eta)),
\end{eqnarray*}
which implies that}
\begin{eqnarray*}
         \huaI_{\omega}(\xi \cdot_{r^{B,\omega}} \eta)=(\huaI_{\omega}\xi)\cdot_B(\huaI_{\omega}\eta).
 \end{eqnarray*}
 Thus $\cdot_{r^{B,\omega}}$ is a mock-pre-Lie algebra multiplication and $\huaI_{\omega}$ is a mock-pre-Lie algebra isomorphism from $(\g^*,\cdot_{r^{B,\omega}})$ to $(\g,\cdot_B)$. Finally, by the fact that $B,B+\lambda{\Id}:(\g,\cdot_B)\rightarrow (\g,\cdot_{\g})$ are both mock-pre-Lie algebra homomorphisms, we deduce that$$   r^{B,\omega}_{+}:= B\circ \huaI_{\omega},  \quad r^{B,\omega}_{-}:=(B+\lambda {\Id})\circ \huaI_{\omega}:(\g^*,\cdot_{r^{B,\omega}}) \longrightarrow (\g,\cdot_{\g})$$
    are both mock-pre-Lie algebra homomorphisms. Therefore, by Theorem \ref{heart}, we have $ [[r,r]] =0$ and $(\g,\g_{r^{B,\omega}}^*)$ is a quasi-triangular mock-pre-Lie bialgebra. Since $\huaI_{\omega} = (-\frac{1}{\lambda})(r^{B,\omega}_{+}-r^{B,\omega}_{-})$  is an isomorphism, the mock-pre-Lie bialgebra $(\g,\g_{r^{B,\omega}}^*)$ is factorizable.
\end{proof}

 \end{document}